\title[]{On large potential perturbations of the Schr\"{o}dinger, wave and Klein--Gordon equations}
\def\PpP{-}
\DeclareMathOperator{\spt}{spt}
\newcommand{\rie}{\mathsf{R}}
\newcommand{\bra}[1]{\langle #1 \rangle}
\newcommand{\one}[1]{\mathbf{1}_{#1}}
\numberwithin{equation}{section}
\newtheorem{theorem}{Theorem}[section]
\newtheorem{corollary}[theorem]{Corollary}
\newtheorem{lemma}[theorem]{Lemma}
\theoremstyle{remark}
\newtheorem{remark}[theorem]{Remark}
\theoremstyle{definition}
\newtheorem{definition}[theorem]{Definition}
\date{\today}
\author[P.~D'Ancona]{Piero D'Ancona}
\address{Piero D'Ancona: 
Dipartimento di Matematica\\
Sapienza Universit\`{a} di Roma\\
Piazzale A.~Moro 2\\
00185 Roma\\
Italy}
\email{dancona@mat.uniroma1.it}
\thanks{%
}
\subjclass[2010]{%
35Q41
, 35L05
}
\keywords{%
Schr\"{o}dinger equation%
; Strichartz estimates%
; Dispersive equations%
; Resolvent estimates%
; Local energy decay%
}
\begin{document}

\begin{abstract}
  We prove a sharp resolvent estimate in scale invariant norms of Amgon--H\"{o}rmander type for a magnetic Schr\"{o}dinger operator on $\mathbb{R}^{n}$, $n\ge3$
  \begin{equation*}
    L=-(\partial+iA)^{2}+V
  \end{equation*}
  with large potentials $A,V$ of almost critical decay and regularity.

  The estimate is applied to prove sharp smoothing and Strichartz estimates for the Schr\"{o}dinger, wave and Klein--Gordon flows associated to $L$.
\end{abstract}

\maketitle



\section{Introduction}\label{sec:intr}

We consider a selfadjoint Schr\"{o}dinger operator 
in $L^{2}(\mathbb{R}^{n})$, $n\ge3$, of the form
\begin{equation}\label{eq:firstL}
  L=-(\partial+iA)^{2}+V
\end{equation}
where $A=(A_{1},\dots,A_{n}):\mathbb{R}^{n}\to \mathbb{R}^{n}$
is the magnetic potential and
$V:\mathbb{R}^{n}\to \mathbb{R}$ the electric potential.
In order to allow a unified treatment of the dispersive
equations corresponding to $L$, we shall
always assume $L\ge0$, although this assumption
can be relaxed. We are interested in 
the dispersive properties for solutions of the equations
\begin{equation}\label{eq:dispeq}
  i \partial_{t}u+Lu=0,
  \qquad
  \partial^{2}_{t}u+Lu=0,
  \qquad
  \partial^{2}_{t}u+(L+1)u=0,
\end{equation}
associated to the operator $L$. 

The critical behaviour for dispersion appears to be
$|A|\lesssim |x|^{-1}$, $|V|\lesssim|x|^{-2}$, and one of
our goals is to get as close as possible to this kind of 
singularity. All the results of the paper are valid under
the following assumption
(note however that weaker conditions 
are required in the course of the paper):

\medskip
\textbf{Assumption (L)}. 
Let $n\ge3$.
The operator $L$ in \eqref{eq:firstL}
is selfadjoint in $L^{2}(\mathbb{R}^{n})$ with domain
$H^{2}(\mathbb{R}^{n})$, non negative, 0 is not
a resonance for $L$, and writing
$w(x)=\bra{\log|x|}^{\mu}\bra{x}^{\delta}$
for some $\delta>0$, $\mu>1$,
\begin{equation}\label{eq:VAass}
  w(x)|x|^{2}
  (V-i \partial \cdot A)
  \in L^{\infty},
  \quad
  w(x)|x|\widehat{B}\in L^{\infty},
  \quad
  w(x)|x| A\in L^{\infty}\cap\dot H^{1/2}_{2n}
\end{equation}
where 
$\widehat{B}_{k}:=\sum_{j=1}^{n}
\frac{x_{j}}{|x|}(\partial_{j}A_{k}-\partial_{k}A_{j})$
is the tangential component of the magnetic field.
\medskip

It is well known that a resonance at 0 is an obstruction
to dispersion. 
The precise notion required here is the following:

\begin{definition}[Resonance]\label{def:reson}
  We say that $0$ is a \emph{resonance} for the operator
  $L$ if there exists a nonzero
  $v\in H^{2}_{loc}(\mathbb{R}^{n}\setminus0)\cap H^{1}_{loc}$
  solution of $Lv=0$ with the properties
  \begin{equation}\label{eq:resona}
    |x|^{\frac n2-2-\sigma}v\in L^{2}
    \quad\text{and}\quad 
    |x|^{\frac n2-1-\sigma}\partial v\in L^{2}
    \qquad \forall \sigma\in(0,\sigma_{0}).
  \end{equation}
  for some $\sigma_{0}>0$.
  The function $v$ is then called a \emph{resonant state}
  at 0 for $L$.
  (If $n\ge5$ this condition reduces to 0
  being an eigenvalue of $L$).
\end{definition}

A standard approach to the problem is based on a uniform estimate
for the resolvent operator of $L$.
This approach has a long tradition, starting from the
classical theories of Kato, Kato--Kuroda
and Agmon. 
The bulk of the paper 
(Sections \ref{sec:largefreq}--\ref{sec:the_full_reso_esti})
is devoted to prove the following estimate,
which is sharp even for $L=-\Delta$:

\begin{theorem}[Resolvent estimate]\label{the:1}
  Suppose Assumption (L) is verified. 
  Then the resolvent operator $R(z)=(L-z)^{-1}$
  satisfies the estimate
  \begin{equation}\label{eq:resestRz}
    \|R(z)f\|_{\dot X}
    +
    |z|^{\frac12}\|R(z)f\|_{\dot Y}
    +
    \|\partial R(z)f\|_{\dot Y}
    \lesssim
    \|f\|_{\dot Y^{*}}
  \end{equation}
  with a constant uniform in $z$ in the complex strip 
  $|\Im z|\le1$. In particular, the boundary values
  of $R(\lambda\pm i \epsilon)$ as $\epsilon \downarrow0$
  are well defined bounded operators from 
  $\dot Y^{*}$ to $\dot X$ (or to $\dot Y$ 
  provided $\lambda\neq0$).
\end{theorem}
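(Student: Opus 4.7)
The plan is to fix $z$ with $|\Im z|\le 1$, set $u=R(z)f$, and derive \eqref{eq:resestRz} from multiplier/commutator identities applied to $(L-z)u=f$, handling separately a high-frequency regime $|z|\ge \Lambda$ and a bounded regime $|z|\le \Lambda$ for a sufficiently large $\Lambda$. The limiting absorption statement for $R(\lambda\pm i0)$ will then follow from the uniformity in $z$ together with a standard density argument in $\dot Y^{*}$.

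\textbf{High frequencies.} I would use a radial Morawetz-type multiplier of the form $\mathcal M u = \phi(r)\partial_{r}u + \psi(r) u$, with $\phi$ convex, equal to $r$ at infinity and smoothed to scale $|z|^{-1/2}$ near the origin, and $\psi$ fixed by integration-by-parts considerations. Testing $(L-z)u=f$ against $\mathcal M u$ and taking the real part produces (a) positive quadratic forms which, by Hardy-type inequalities, control exactly $\|u\|_{\dot X}^{2}$, $\|\partial u\|_{\dot Y}^{2}$ and $|z|\|u\|_{\dot Y}^{2}$; (b) boundary terms of the correct sign at infinity; and (c) error terms involving $V-i\partial\cdot A$, the tangential magnetic field $\widehat B$ and $A$ itself. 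Assumption~(L) is tailored precisely so that each error carries an extra factor $w(x)^{-1}|x|^{-2}$ or $w(x)^{-1}|x|^{-1}$, and is absorbed after Cauchy--Schwarz by the positive part of the identity. The coupling $\mathrm{Re}\,\langle f,\mathcal M u\rangle$ is bounded by the $\dot Y^{*}$\,--\,$\dot Y$ duality, yielding \eqref{eq:resestRz} on $|z|\ge \Lambda$.

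\textbf{Bounded frequencies.} Here the potential cannot be treated as small compared to $|z|$, so I would argue by a compactness/contradiction scheme. If \eqref{eq:resestRz} failed on $\{|z|\le\Lambda\}$, there would exist sequences $z_{n}\to z_{\infty}$ with $|\Im z_{\infty}|\le 1$ and $u_{n}=R(z_{n})f_{n}$ normalised to $1$ in the left-hand side while $\|f_{n}\|_{\dot Y^{*}}\to 0$. The high-frequency identity, used now only as an a priori control together with compact embeddings of the Morrey-type space $\dot X$ into weighted $L^{2}$ on bounded annuli, would extract a weak limit $u_{\infty}\in H^{1}_{loc}$ solving $(L-z_{\infty})u_{\infty}=0$ with decay exactly of type \eqref{eq:resona}. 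For $\Im z_{\infty}\neq 0$ selfadjointness of $L$ forces $u_{\infty}=0$; for $z_{\infty}>0$ a Kato-type absence-of-positive-eigenvalue argument applies under the smoothness provided by Assumption~(L); and the remaining case $z_{\infty}=0$ is excluded precisely by the non-resonance assumption, via Definition~\ref{def:reson}. This contradicts the normalisation and closes the estimate uniformly in the bounded regime.

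\textbf{Main obstacle.} The genuinely delicate point is the threshold $z=0$, where the helpful term $|z|^{1/2}\|u\|_{\dot Y}$ degenerates and one must match the decay coming out of the Morrey-type a priori bound with the precise decay forbidden in Definition~\ref{def:reson}. This is what forces the logarithmic correction $\bra{\log|x|}^{\mu}$ with $\mu>1$ in the weight $w$: it provides just enough extra integrability to pass from the scale-invariant $\dot X$-norm to the weighted $L^{2}$ space in which the resonance is defined. The magnetic hypothesis on $\widehat B$ is the counterpart on the Morawetz side, since without control of the tangential component the magnetic curl would enter the commutator identity with no favourable sign and could not be absorbed near zero frequency; it is precisely this interplay between the logarithmic weight and the tangential magnetic field that makes the estimate sharp.
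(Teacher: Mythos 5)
Your bounded-frequency argument takes a genuinely different route from the paper's. You propose a compactness-and-contradiction scheme: assume the estimate fails, extract a weak limit $u_\infty$ solving $(L-z_\infty)u_\infty=0$, and rule it out by selfadjointness, absence of embedded eigenvalues, or the no-resonance condition. The paper instead writes $R(z)=R_0(z)(I-K(z))^{-1}$ via the Lippmann--Schwinger equation, proves that $K(z)$ is compact on $\dot Y^{*}$ (Lemma \ref{lem:compact}), establishes injectivity of $I-K(z)$ from the same spectral inputs (Lemmas \ref{lem:eigvcomplex} and \ref{lem:eigv}, with Koch--Tataru supplying the absence of embedded eigenvalues), and obtains uniformity on compact $z$-sets directly from norm-continuity of $z\mapsto(I-K(z))^{-1}$ (Lemma \ref{lem:invertImK}). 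Both routes rely on identical spectral facts, but the Fredholm route sidesteps the delicate point in your plan: the normalization of $u_n$ is in the $\dot X$ and $\dot Y$ norms, which involve suprema over all dyadic scales, so compactness on bounded annuli by itself does not preclude the weak limit $u_\infty$ from vanishing while the $\dot X$-mass escapes to very small or very large scales. Closing that gap would require an additional argument (for instance, using the equation and the a priori bounds to pin the mass to a fixed bounded annulus), which the paper avoids altogether.

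Your high-frequency argument is close in spirit to the paper's Theorem \ref{the:resestlarge}, though the paper uses the $R$-dependent radial weight
\begin{equation*}
  \psi=\frac{1}{2R}|x|^{2}\one{|x|\le R}+|x|\one{|x|>R},\qquad \phi=-\frac{1}{R}\one{|x|\le R},
\end{equation*}
and takes a supremum over $R>0$, rather than smoothing at scale $|z|^{-1/2}$; either choice produces the same family of norms on the left.

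Two smaller points in your commentary. First, the log weight $\bra{\log|x|}^{\mu}$ with $\mu>1$ does not serve to match the resonance decay at threshold; it is there so that the pointwise bounds in Assumption~(L) imply the dyadic $\ell^{1}L^{\infty}$ conditions used throughout (one needs $\bra{\log|x|}^{-\mu}\in\ell^{1}L^{\infty}$, which holds precisely when $\mu>1$). Second, the control of $\widehat B$ enters the Morawetz identity through the term $I_{B}$ uniformly in frequency and is absorbed by taking $|\Re z|$ large; it is not a feature specific to the zero-frequency analysis.
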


See Theorem
\ref{the:fullresest} and Corollary \ref{cor:equivass} below.
Here the spaces $\dot X, \dot Y$ have norms
\begin{equation*}
  \textstyle
  \|v\|_{\dot{X}}^2:=\sup_{R>0}\frac1{R^2}
  \int_{|x|=R}|v|^2dS
  \qquad
  \|v\|_{\dot{Y}}^2:=\sup_{R>0}\frac1{R}
  \int_{|x|\leq R}|v|^2dx
\end{equation*}
while $\dot Y^{*}$ is the (pre)dual of $\dot Y$;
note that $\dot Y^{*}$ is an homogeneous
version of the Agmon--H\"{o}rmander space $B$
\cite{AgmonHormander76-a}. 
The last property in the statement is also called the 
\emph{limiting absorption principle} for $L$.
We think that an interesting contribution of the present paper is a
conceptually simple proof of \eqref{eq:resestRz},
based on a combination of the
multiplier method (for large frequencies) and Fredholm
theory (for small frequencies).

With \eqref{eq:resestRz} at our disposal, the
classical Kato's theory of smoothing operators gives 
with little effort several smothing estimates
(also known as \emph{local energy deca}y) for the
Schr\"{o}dinger flow $e^{itL}$.
Kato's theory was extended in 
\cite{DAncona15-a} to include the wave and Klein--Gordon
equations. By combining these techniques, we obtain
the following scaling invariant estimates:

\begin{theorem}[Smoothing estimates]\label{the:2}
  Under Assumption (L), we have:
  \begin{equation*}
    \||x|^{-1/2}e^{itL}f\|_{\dot YL^{2}_{t}}
    +
    \||D|^{1/2}e^{itL}f\|_{\dot YL^{2}_{t}}
    \le
    C\|f\|_{L^{2}},
  \end{equation*}
  \begin{equation*}
    \||x|^{-1/2}e^{it\sqrt{L}}f\|_{\dot YL^{2}_{t}}
    +
    \||D|^{1/2}e^{it \sqrt{L}}f\|_{\dot YL^{2}_{t}}
    \le
    C\|f\|_{\dot H^{1/2}},
  \end{equation*}
  \begin{equation*}
    \||x|^{-1/2}e^{it\sqrt{L+1}}f\|_{\dot YL^{2}_{t}}
    +
    \||D|^{1/2}e^{it \sqrt{L+1}}f\|_{\dot YL^{2}_{t}}
    \le
    C\|f\|_{H^{1/2}}.
  \end{equation*}
\end{theorem}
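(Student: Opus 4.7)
The plan is to deduce each of the three smoothing estimates from the resolvent bound \eqref{eq:resestRz} of Theorem \ref{the:1} via the general Kato-type smoothing framework developed in \cite{DAncona15-a}, which extends the classical abstract smoothing theorem to the scale-invariant Agmon–H\"{o}rmander scale. The basic mechanism is Plancherel in $t$: by Stone's formula $dE_{\lambda}=\pi^{-1}\Im R(\lambda+i0)\,d\lambda$, the $\dot Y L^{2}_{t}$ norm of $Ae^{itL}f$ is controlled by the spectral density of $A R(z) A^{*}$, so a uniform bound of $AR(z)A^{*}:\dot Y^{*}\to\dot Y$ yields the desired smoothing estimate.

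For the Schr\"{o}dinger case, the first estimate corresponds to choosing $A=|x|^{-1/2}$: the requisite bilinear form control reduces exactly to $\||z|^{1/2}R(z)f\|_{\dot Y}\lesssim\|f\|_{\dot Y^{*}}$, which is the second term in \eqref{eq:resestRz}. The second estimate (with $A=|D|^{1/2}$) reduces to $\|\partial R(z)f\|_{\dot Y}\lesssim\|f\|_{\dot Y^{*}}$, the third term in \eqref{eq:resestRz}. Both inputs are furnished by Theorem \ref{the:1}, so these two estimates follow by a direct invocation of the abstract framework.

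For the wave and Klein–Gordon cases, the plan is to reduce the resolvent of $\sqrt L$ (respectively $\sqrt{L+1}$) to that of $L$ (respectively $L+1$) via the algebraic identity
\[
(\sqrt L -z)^{-1}=(\sqrt L+z)R(z^{2}),
\]
valid on the positive spectrum for $\Im z\ne 0$. Substituting $w=z^{2}$, Theorem \ref{the:1} yields $\|R(w)\|_{\dot Y^{*}\to\dot Y}\lesssim |z|^{-1}$ and $\|\partial R(w)\|_{\dot Y^{*}\to\dot Y}\lesssim 1$, so $(\sqrt L -z)^{-1}$ inherits the correct $\dot Y^{*}\to \dot Y$ bound (with the $\sqrt L$ on the right absorbed by $\partial$, modulo the first-order commutator with $A$). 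Applying the abstract machinery of \cite{DAncona15-a} with the generator $\sqrt L$ then produces the two smoothing estimates, where the $\dot H^{1/2}$ norm of $f$ appears precisely because the $|z|^{-1}$ factor in $R(z^{2})$ must be absorbed by a half-derivative weight on the datum. The Klein–Gordon case is identical with $L+1$ in place of $L$; because the spectrum of $L+1$ is bounded away from $0$, the inhomogeneous $H^{1/2}$ norm arises instead.

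The main obstacle is the low-frequency regime for the wave equation, where $\sqrt L$ has continuous spectrum accumulating at $0$ with no spectral gap: it is exactly the sharpness of the weight $|z|^{1/2}$ in Theorem \ref{the:1}, valid uniformly down to $z=0$ and compatible with the non-resonance condition in Assumption (L), that allows the singularity of $(\sqrt L-z)^{-1}$ to be exactly absorbed into the homogeneous Sobolev scaling. Once this is verified, the remainder of the argument is a routine application of the extended Kato theory of \cite{DAncona15-a}, which makes the passage from resolvent estimate to smoothing estimate essentially mechanical.
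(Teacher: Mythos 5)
Your overall strategy—feeding the resolvent estimate \eqref{eq:resestRz} into the Kato smoothing framework of \cite{DAncona15-a}, then using the quadratic reduction to pass to $\sqrt L$ and $\sqrt{L+1}$—is the same as the paper's, but there are two genuine gaps and one misattribution.

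First, the identification of which term in \eqref{eq:resestRz} yields which smoothing estimate is wrong. You claim the $|x|^{-1/2}$ estimate ``reduces exactly to'' $|z|^{1/2}\|R(z)f\|_{\dot Y}\lesssim\|f\|_{\dot Y^*}$; but the $|z|^{1/2}$ factor plays no role in that smoothing estimate. What is actually needed is a bound on the symmetrized bilinear form $|x|^{-1/2}R(z)|x|^{-1/2}$, which in the paper comes from the \emph{first} term $\|R(z)f\|_{\dot X}\lesssim\|f\|_{\dot Y^*}$ (via $\||x|^{-1}v\|_{\dot Y}\le\|v\|_{\dot X}$), together with duality and a Stein--Weiss complex interpolation step. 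Likewise, the $|D|^{1/2}$ estimate needs $|D|^{1/2}R(z)|D|^{1/2}:\dot Y^*\to\dot Y$, obtained by interpolating $|D|R(z)$ against its dual $R(z)|D|$; the third term of \eqref{eq:resestRz} alone is not in the right symmetric form. This interpolation step, carried out in Corollary \ref{cor:onehalfres} using boundedness of Riesz transforms and $|D|^{iy}$ on $\dot Y$ (via the $A_{2}$ weight $|x|^{-1}$ and real interpolation of dyadic-weighted $\ell^{2}L^{2}$ spaces), is absent from your proposal and is not automatic.

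Second, your wave/Klein--Gordon reduction via $(\sqrt L-z)^{-1}=(\sqrt L+z)R(z^{2})$ is a legitimate starting point, but the continuation ``with the $\sqrt L$ on the right absorbed by $\partial$, modulo a first-order commutator with $A$'' does not hold as stated: $\sqrt L$ is a nonlocal operator and cannot be exchanged for $\partial$ up to a lower-order commutator. The paper avoids this entirely by applying the black-box Theorem \ref{the:waveP} from \cite{DAncona15-a}, which converts $L$-(super)smoothness of an operator $A$ into $\sqrt{L+\nu}$-(super)smoothness of $A(L+\nu)^{-1/4}P$; the fractional power $L^{1/4}$ is then compared to $|D|^{1/2}$ through the elementary Lemma \ref{lem:sobolevL} (a Hardy/interpolation bound), which is precisely where the $\dot H^{1/2}$ and $H^{1/2}$ data norms arise. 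Your sketch essentially attempts to reprove that abstract reduction on the fly, but the functional-calculus step $L^{1/4}$ replaces (and cannot be replaced by) the commutator argument you describe. Finally, the passage from the weighted $L^{2}$ smoothing estimates (with weights $\rho\in\ell^{2}L^{\infty}$) to the $\dot Y L^{2}_{t}$ form actually stated in Theorem \ref{the:2} requires the equivalence \eqref{eq:equivdY} together with a swap of the dyadic supremum with the time integral (Corollary \ref{cor:smooswap}); this step is not addressed.
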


In the $\dot YL^{2}_{t}$ norm the order of integration
is reversed, but one can easily write these estimates
in a more standard (and actually equivalent) 
form in terms of $L^{2}$ 
weighted norms. Indeed, if $\rho$ is any function such that
$\sum_{j\in\mathbb{Z}}\|\rho\|_{L^{\infty}(|x|\sim 2^{j})}^{2}
  <\infty$, 
we have $\|\rho |x|^{-1/2}v\|_{L^{2}}\lesssim\|v\|_{\dot Y}$,
hence the smoothing estimates for Schr\"{o}dinger can be written
\begin{equation*}
  \|\rho|x|^{-1}e^{itL}f\|_{L^{2}_{t}L^{2}}
  +
  \|\rho|x|^{-\frac12}|D|^{\frac12}e^{itL}f\|_{L^{2}_{t}L^{2}}
  \lesssim
  \|f\|_{L^{2}}
\end{equation*}
and similarly for the wave and Klein--Gordon equation.
A typical example of such a weight is
$\rho=\bra{\log|x|}^{-\nu}$ for $\nu>1/2$.

These smoothing estimates, 
together with the corresponding inhomogeneous 
ones, are proved in Section \ref{sec:smoo_esti} and in particular
Corollary \ref{cor:smoosch}, \ref{cor:smooWE} 
and \ref{cor:smooswap}.
Note that if we are in the \emph{Coulomb gauge}
$\partial \cdot A=0$, 
the last condition in \eqref{eq:VAass}
is not necessary both for the smoothing estimates and
the uniform resolvent estimate \eqref{eq:resestRz}.

As a final application, in Section \ref{sec:stri_esti}
we prove the full set of Strichartz estimates for
the three dispersive equations \eqref{eq:dispeq}.
We recall the basic facts for the unperturbed case
in dimension $n\ge3$:
\begin{itemize}
  \item 
  A couple $(p,q)$ is
  \emph{Schr\"{o}dinger admissible} if
  \begin{equation*}
    \textstyle
    p\in[2,\infty],
    \qquad
    q\in[2,\frac{2n}{n-2}],
    \qquad
    \frac2p+\frac nq=\frac n2,
  \end{equation*}
  and \emph{wave admissible} if
  \begin{equation*}
    \textstyle
    p\in[2,\infty],
    \qquad
    q\in[2,\frac{2(n-1)}{n-3}],
    \qquad
    \frac2p+\frac {n-1}q=\frac {n-1}2,
    \qquad
    q\neq \infty.
  \end{equation*}
  \item 
  The \emph{homogeneous Strichartz estimates} are
  \begin{align*} 
    \|e^{-it \Delta}f\|_{L^{p}_{t}L^{q}}
    \lesssim
    \|f\|_{L^{2}},
    &
    \quad (p,q)\ \text{Schr\"{o}dinger admissible,}
  \\ 
    \||D|^{\frac1q-\frac1p}e^{-it|D|}f\|_{L^{p}_{t}L^{q}}
    \lesssim
    \|f\|_{\dot H^{\frac12}},
    &
    \quad (p,q)\ \text{wave admissible,}
  \\
    \|\bra{D}^{\frac1q-\frac1p}e^{it\bra{D}}f\|_{L^{p}_{t}L^{q}}
    \lesssim
    \|f\|_{H^{\frac12}},
    &
    \quad (p,q)\ \text{Schr\"{o}dinger or wave admissible.}
  \end{align*}
  Corresponding
  inhomogeneous versions of the estimates are also true.

  \item 
  The previous estimates can be refined using Lorentz norms.
  At the \emph{(Schr\"{o}dinger) endpoint}
  $(2,\frac{2n}{n-2})$ one gets
  \begin{equation}\label{eq:striendphom}
    \|e^{-it \Delta}f\|_{L^{2}_{t}L^{\frac{2n}{n-2},2}}
    \lesssim
    \|f\|_{L^{2}}
  \end{equation}
  and from this case all the other estimates can be 
  recovered, by interpolating with the conservation of  
  $L^{2}$ mass; 
  actually, by real interpolation one obtains estimates
  in the $L^{p}_{t}L^{q,2}$ norm for every admissible couple
  $(p,q)$.
  A similar situation occurs at the \emph{(wave) endpoint}
  $(2,\frac{2(n-1)}{n-3})$, in dimension $n\ge4$.
\end{itemize}

Then in Section \ref{sec:stri_esti} we prove:

\begin{theorem}[Strichartz estimates]\label{the:3}
  Suppose Assumption (L) is verified. Then we have
  the estimates
  \begin{equation*}
    \|e^{it L}f\|_{L^{2}_{t}L^{\frac{2n}{n-2},2}}
    \lesssim
    \|f\|_{L^{2}}
  \end{equation*}
  and hence the full set of $L^{p}_{t}L^{q}$ estimates,
  for all Schr\"{o}dinger admissible $(p,q)$;
  moreover, for all non endpoint, wave admissible couple $(p,q)$
  we have
  \begin{equation*}
    \||D|^{\frac1q-\frac1p}e^{-it \sqrt{L}}f\|_{L^{p}_{t}L^{q}}
    \lesssim
    \|f\|_{\dot H^{\frac12}}
  \end{equation*}
  and for all non endpoint, wave or Schr\"{o}dinger
  admissible couple $(p,q)$ we have
  \begin{equation*}
    \|\bra{D}^{\frac1q-\frac1p}e^{it\sqrt{L+1}}f\|_{L^{p}_{t}L^{q}}
    \lesssim
    \|f\|_{H^{\frac12}}.
  \end{equation*}
\end{theorem}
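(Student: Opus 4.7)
The plan is to follow a classical Kato-type perturbative argument, reducing Strichartz for $L$ to Strichartz for $L_0=-\Delta$ plus the smoothing estimates of Theorem \ref{the:2}. Write $L=L_0+W$ with
\[
  W=-2iA\cdot\partial+W_0,\qquad W_0:=-i\,\partial\cdot A+|A|^2+V.
\]
Assumption (L) forces the critical pointwise bounds $|W_0(x)|\lesssim w(x)^{-1}|x|^{-2}$ and $|A(x)|\lesssim w(x)^{-1}|x|^{-1}$, which via a standard shell decomposition make multiplication by $|x|^2 W_0$ (resp.\ $|x|A$) bounded from $\dot Y$ (resp.\ from the half-derivative space $|D|^{-1/2}\dot Y$) into $\dot Y^{*}$.

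For the Schr\"{o}dinger endpoint I would write the Duhamel identity
\[
  e^{itL}f=e^{itL_0}f+i\int_0^t e^{i(t-s)L_0}\bigl(W\,e^{isL}f\bigr)\,ds,
\]
apply the Keel--Tao endpoint $e^{itL_0}\colon L^2\to L^2_tL^{2n/(n-2),2}$ to the free part, and combine it with the dual of the free Kato smoothing $e^{itL_0}\colon L^2\to L^2_t\dot Y$ to obtain the mixed inhomogeneous estimate
\[
  \Bigl\|\int_0^t e^{i(t-s)L_0}F\,ds\Bigr\|_{L^2_tL^{2n/(n-2),2}}\lesssim\|F\|_{L^2_t\dot Y^{*}},
\]
which is a standard extension of Keel--Tao in the smoothing--Strichartz framework of \cite{DAncona15-a}. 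Then Theorem \ref{the:2} gives $\|We^{isL}f\|_{L^2_t\dot Y^{*}}\lesssim\|f\|_{L^2}$: the scalar part $W_0\,e^{isL}f$ is controlled via $|W_0|\lesssim|x|^{-2}$ and the $|x|^{-1/2}$ smoothing, while the magnetic part $A\cdot\partial e^{isL}f$ is controlled via $|A|\lesssim|x|^{-1}$ and the gradient smoothing $\||D|^{1/2}e^{isL}f\|_{L^2_t\dot Y}$, using the $\dot H^{1/2}_{2n}$ regularity of $|x|A$ to redistribute a half-derivative onto the coefficient. The full Schr\"{o}dinger admissible range is then recovered by real interpolation with the $L^{\infty}_tL^2$ conservation of mass.

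The wave and Klein--Gordon estimates follow by the same scheme applied to the associated first-order system: Duhamel is written around the free half-wave (resp.\ free Klein--Gordon) propagator, the free non-endpoint wave Strichartz estimates replace the Keel--Tao input, and the forcing $W\,e^{is\sqrt{L}}f$ is placed in $L^2_t\dot Y^{*}$ with norm controlled by $\|f\|_{\dot H^{1/2}}$ (resp.\ $\|f\|_{H^{1/2}}$) thanks to the corresponding smoothing estimates of Theorem \ref{the:2}. Since we stay off the wave endpoint $(2,2(n-1)/(n-3))$, Christ--Kiselev handles the retarded time integral without loss, yielding all non-endpoint wave-admissible couples.

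The main obstacle is the critical first-order perturbation $-2iA\cdot\partial$: since $|A|\lesssim|x|^{-1}$ is scale-invariant, it cannot be treated as a subcritical potential, and placing it in $\dot Y^{*}$ requires the full strength of the derivative piece $\|\partial R(z)f\|_{\dot Y}$ of \eqref{eq:resestRz} (equivalently, the gradient smoothing in Theorem \ref{the:2}) together with the $\dot H^{1/2}_{2n}$ regularity of $|x|A$. A secondary delicate point is the Schr\"{o}dinger endpoint itself: because both sides of the relevant inhomogeneous estimate share $L^2$ in time, Christ--Kiselev is unavailable, and the endpoint must instead be produced by the bilinear $TT^{*}$/Keel--Tao argument adapted to smoothing--Strichartz pairings as in \cite{DAncona15-a}.
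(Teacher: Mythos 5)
Your overall architecture matches the paper's: Duhamel around the free flow, close via the smoothing estimates, Christ--Kiselev for the off-endpoint wave and Klein--Gordon cases, and Kato--Ponce with the $\dot H^{1/2}_{2n}$ regularity of $|x|A$ to redistribute a half-derivative onto the coefficient. The wave and Klein--Gordon parts of your sketch are essentially correct and match Theorems \ref{the:striWE} and \ref{the:striKG}.

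However, there is a genuine gap in the Schr\"{o}dinger endpoint step, and it concerns precisely the ingredient you describe as ``standard.'' The mixed retarded estimate you propose,
\begin{equation*}
  \Bigl\|\int_0^t e^{i(t-s)L_0}F\,ds\Bigr\|_{L^2_tL^{2n/(n-2),2}}
  \lesssim
  \|F\|_{L^2_t\dot Y^{*}},
\end{equation*}
is not what the paper uses, is not covered by the abstract supersmoothness framework of \cite{DAncona15-a} (which only converts supersmoothness into smoothing-to-smoothing nonhomogeneous bounds, i.e.\ Theorem \ref{the:2abs}, not smoothing-to-Strichartz), and would in any case be insufficient. The trouble is the first-order term: putting $F=\partial\cdot(Ae^{isL}f)$ into $L^2_t\dot Y^{*}$ with $|A|\lesssim|x|^{-1}$ amounts to controlling $\||x|^{-1/2}\partial u\|_{\ell^1L^2}$, i.e.\ a full derivative of $u$, whereas the smoothing estimates (Corollary \ref{cor:smoosch}, Theorem \ref{the:2}) only give half a derivative, $\rho|x|^{-1/2}|D|^{1/2}u\in L^2_tL^2$. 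The half-derivative you plan to shift onto $A$ via Kato--Ponce still leaves you half a derivative short.

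What closes this gap in the paper is the Ionescu--Kenig mixed endpoint estimate \eqref{eq:iok}, which carries an explicit $|D|^{-1/2}$ smoothing gain on the right-hand side: the forcing is measured in $\|w|D|^{-1/2}F\|_{L^2_tL^2}$, not in $\|F\|_{L^2_t\dot Y^{*}}$. Applied to $F=\partial\cdot(Au)$, the $|D|^{-1/2}$ cancels half of the outer derivative and produces $\|\sigma|D|^{1/2}(Au)\|_{L^2_tL^2}$, which is exactly where the commutator Lemma \ref{lem:L2bddcomm} and Kato--Ponce take over and where the $\dot H^{1/2}_{2n}$ assumption enters. This is also why the paper singles out the Ionescu--Kenig result (and its Lorentz refinement, Mizutani) as the one non-elementary input; see Remark \ref{rem:compare}. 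Your sketch identifies the right obstruction (no Christ--Kiselev at the $L^2_t$--$L^2_t$ endpoint and the critical first-order term) but replaces the genuinely needed directional-smoothing input with a hypothetical ``Keel--Tao in the \cite{DAncona15-a} framework'' which does not exist in that reference and which, at the level of derivative counting, would not suffice.
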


These estimates and their nonhomogeneous versions
are proved in Theorems \ref{the:endpointsch}, \ref{the:striWE} 
and \ref{the:striKG} in Section \ref{sec:stri_esti}.
Note that we prove the endpoint estimate for
the Schr\"{o}dinger equation, using a result
for the unperturbed Schr\"{o}dinger flow due to Ionescu and
Kenig \cite{IonescuKenig05-a} (which can be refined to
Lorentz spaces, as remarked in \cite{Mizutani16-b}).

\begin{remark}[]\label{rem:compare}
  We compare our results with
  \cite{ErdoganGoldbergSchlag09-a}, where for the first time
  smoothing and Strichartz estimates were obtained for
  Schr\"{o}dinger equations with large magnetic potentials,
  in any dimension $n\ge3$.
  The assumptions on the coefficients in 
  \cite{ErdoganGoldbergSchlag09-a} are
  \begin{equation*}
    |A|+\bra{x}|V|\lesssim \bra{x}^{-1-\epsilon},
    \qquad
    \bra{x}^{1+\epsilon'}A\in \dot H^{1/2}_{2n},
    \qquad
    A \ \text{is continuous.}
  \end{equation*}
  These conditions are largely overlapping with
  \eqref{eq:VAass}; we require a stronger condition
  on $\widehat{B}$, which is defined as a combination of first
  derivatives of $A$, but on the other hand we can consider
  potentials $A,V$ which are singular at the origin.
  Other improvements with respect to 
  \cite{ErdoganGoldbergSchlag09-a} are
  \begin{itemize}
    \item the endpoint Strichartz estimate for Schr\"{o}dinger;
    \item sharp scaling invariant resolvent and smoothing estimates;
    \item a unified treatment including wave and
    Klein--Gordon equations.
  \end{itemize}
  Last but not least, our proof is `elementary',
  indeed we use only multiplier 
  methods and Fredholm theory (and standard results from
  Calder\'{o}n--Zygmund theory).
  The only nonelementary result we need
  is Koch and Tataru's \cite{KochTataru06-a}
  to exclude embedded eigenvalues for $L$. 
  One can make the paper self--contained by
  assuming explicitly that no resonances exist
  in the spectrum of $L$.
  Note that under this additional assumption we can take 
  $\delta=0$ in \eqref{eq:VAass}, since the additional decay is
  used mainly to handle possible embedded resonances
  (see Lemma \ref{lem:eigv}).

  Note also that by a gauge transform it is possible to
  reduce to the case $\partial \cdot A=0$ i.e. to the
  \emph{Coulomb gauge}; see
  Remark \ref{rem:delta0},
  Corollary \ref{cor:equivass} and
  Corollary \ref{cor:strschgauge} for details.
  However,the quantity $\widehat{B}$ is gauge invariant
  and the assumption on $\widehat{B}$ 
  can not be removed by a change of gauge.
\end{remark}

We conclude with a short 
(and incomplete) summary of earlier results.
The case of purely electric potentials $A=0$ is well
understood; the list of papers on this topic is long
and here we mention only
\cite{JourneSofferSogge91-a},
\cite{Cuccagna00-a},
\cite{BurqPlanchonStalker04-a},
\cite{DAnconaPierfelice05-a},
and the series by Yajima
\cite{Yajima95-b},
\cite{Yajima95-a},
\cite{ArtbazarYajima00}
(see also \cite{DAnconaFanelli06-a}) concerning $L^{p}$ boundedness
of the scattering wave operator.
In particular, \cite{RodnianskiSchlag04-a}
introduced the strategy of proof used here,
based on Kato's theory (see also \cite{JourneSofferSogge91-a}).

The case of a \emph{small} magnetic potential $A$ was studied in
\cite{GeorgievStefanovTarulli05-a},
\cite{Stefanov07-a},
\cite{FanelliVega09-a}, and in
\cite{DAnconaFanelli08-a} where a comprehensive
study was done on
the main dispersive equations perturbed with a small magnetic 
and a large electric potential,
including massive and massless Dirac systems,
and
\cite{DAnconaFanelliVega10-a}.
See also \cite{Tataru08-a} where the case of fully variable 
coefficients is considered.

Smoothing and Strichartz estimates for the Schr\"{o}dinger
equation with a \emph{large} magnetic potential were proved in
\cite{ErdoganGoldbergSchlag09-a}--%
\cite{ErdoganGoldbergSchlag08-a}
(discussed above),
and for the wave equation in \cite{DAncona15-a},
where the resolvent estimates
of \cite{ErdoganGoldbergSchlag09-a} were used.

Standard references for Strichartz estimates,
at least for the Schr\"{o}dinger and wave equations, are
\cite{GinibreVelo85-a},
\cite{GinibreVelo95-a} and
\cite{KeelTao98-a}.
The situation for the Klein--Gordon flow is complicated by
the different scaling of $\bra{D}$
for small and large frequencies.
A complete analysis was made in
\cite{MachiharaNakanishiOzawa03-a}; 
a proof for Schr\"{o}dinger admissible $(p,q)$ 
can be found in \cite{DAnconaFanelli08-a}, while
wave admissible points can be deduced from the
precised dispersive estimate of \cite{Brenner85-a}.

\begin{remark}\label{rem:dirac}
  By similar techniques it is possible to prove smoothing
  and Strichartz estimates also for Dirac systems.
  This will be part of the joint work
  \cite{DAnconaOkamoto17-a}, concerning the cubic Dirac equation
  perturbed by a large magnetic potential.
\end{remark}

\section{The resolvent estimate for large frequencies}
\label{sec:largefreq}

We shall make constant use of the dyadic norms
\begin{equation}\label{eq:dyadicsp}
  \|v\|_{\ell^{p}L^{q}}
  :=
  \Bigl(
  \sum_{j\in \mathbb{Z}}
    \|v\|_{L^{q}(2^{j}\le|x|<2^{j+1})}^{p}
  \Bigr)^{1/p},
\end{equation}
with obvious modification when $p=\infty$.
More generally, we denote the
mixed radial--angular $L^{q}L^{r}$ norms on a
spherical ring $C=R_{1}\le|x|\le R_{2}$ with
\begin{equation*}
  \|v\|_{L^{q}_{|x|}L^{r}_{\omega}(C)}=
  \|v\|_{L^{q}L^{r}(C)}:=
  \textstyle
  (\int_{R_{1}}^{R_{2}}
    (\int_{|x|=\rho}|v|^{r}dS)^{q/r}d\rho)^{1/q}.
\end{equation*}
and we define for all $p,q,r\in[1,\infty]$
\begin{equation}\label{eq:lpLp}
  \|v\|_{\ell^{p}L^{q}L^{r}}
  :=
  \|\{\|v\|_{L^{q}L^{r}(2^{j}\le|x|<2^{j+1})}\}
  _{j\in \mathbb{Z}}\|_{\ell^{p}}.
\end{equation}
Clearly, when $q=r$ we have simply
$ \|v\|_{\ell^{p}L^{q}L^{q}}=\|v\|_{\ell^{p}L^{q}}$.
With these notations, the Banach norms appearing in
\eqref{eq:resestRz} can be equivalently defined as
\begin{equation*}
  \textstyle
  \|v\|_{\dot{X}}^2
  \simeq
  \||x|^{-1}v\|^{2}_{\ell^{\infty}L^{\infty}L^{2}},
  \qquad
  \textstyle
  \|v\|_{\dot{Y}}^2
  \simeq
  \||x|^{-1/2}v\|_{\ell^{\infty}L^{2}}^{2},
  \qquad
  \|v\|_{\dot Y^{*}}\simeq
  \||x|^{1/2}v\|_{\ell^{1}L^{2}}.
\end{equation*}

For large frequencies $|\Re z|\gg1$, we study the equation
\begin{equation}\label{eq:reseq}
  \Delta_{A}v+Wv+iZ \cdot \partial^{A} v+zv=f
\end{equation}
using a  direct approach based on the Morawetz multiplier 
method. Here
$A(x)=(A_{1}(x),\dots,A_{n}(x)):\mathbb{R}^{n}\to \mathbb{R}^{n}$,
$Z(x)=(Z_{1}(x),\dots,Z_{n}(x)):\mathbb{R}^{n}\to \mathbb{R}^{n}$,
$V:\mathbb{R}^{n}\to \mathbb{R}$,
and we use the notations
\begin{equation*}
  \textstyle
  \widehat{x}_{j}=\frac{x_{j}}{|x|},
  \quad
  \widehat{x}=\frac{x}{|x|},
  \quad
  \partial=(\partial_{1},\dots,\partial_{n}),
  \quad
  \partial^{A}=(\partial_{1}^{A},\dots,\partial_{n}^{A}).
\end{equation*}
\begin{equation*}
  \textstyle
  \Delta_{A}=\sum_{j=1}^{n}(\partial_{j}+iA_{j}(x))^{2},
  \qquad
  \partial_{j}=\frac{\partial}{\partial x_{j}},
  \qquad
  \partial_{j}^{A}=\partial_{j}+iA_{j}(x),
\end{equation*}
Recall that, using the convention of implicit summation over repeated
indices,
\begin{equation*}
  B_{jk}=\partial_{j}A_{k}-\partial_{k}A_{j},
  \qquad
  \widehat{B}_{j}=B_{jk}\widehat{x}_{k},
  \qquad
  \widehat{B}=(\widehat{B}_{1},\dots,\widehat{B}_{n}).
\end{equation*}
and we call the matrix $B$ the \emph{magnetic field} associated
to the potential $A(x)$, and $\widehat{B}$ the
\emph{tangential part} of the field.

We prove the following result:

\begin{theorem}[Resolvent estimate for large frequencies]\label{the:resestlarge}
  Let $n\ge3$. There exists a constant
  $\sigma_{0}$ depending only on $n$ such that
  the following holds. 
  
  Assume $v,f:\mathbb{R}^{n}\to \mathbb{C}$ satisfy
  \eqref{eq:reseq}, $W$ can be split as $W=W_{L}+W_{S}$,
  with
  $W_{L}(x),W_{S}(x),Z_{j}(x):\mathbb{R}^{n}\to \mathbb{R}$
  and 
  \begin{equation}\label{eq:asscoeff}
    \||x|^{3/2}W_{S}\|_{\ell^{1}L^{2}L^{\infty}}
    +\||x|Z\|_{\ell^{1}L^{\infty}}
    \le \sigma_{0},
    \quad
    |\Re z|
    \ge
    \sigma_{0}^{-1}
    \left[
    \||x|\widehat{B}\|_{\ell^{1}L^{\infty}}^{2}
    +
    \||x|W_{L}\|_{\ell^{1}L^{\infty}}
    \right]
    +2.
  \end{equation}
  Then the following estimate holds for all 
  $z$ as in \eqref{eq:asscoeff} with $|\Im z|\le1$
  \begin{equation}\label{eq:estlargela}
    \textstyle
    \|v\|_{\dot X}^{2}
    +
    (n-3)\|\frac{v}{|x|^{3/2}}\|_{L^{2}}^{2}
    +
    |z|\|v\|_{\dot Y}^{2}
    +
    \|\partial^{A}v\|_{\dot Y}^{2}
    \lesssim
    \|f\|_{\dot Y^{*}}^{2}
  \end{equation}
  with an implicit constant depending only on $n$.
\end{theorem}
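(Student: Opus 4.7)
The plan is to prove \eqref{eq:estlargela} by the classical Morawetz multiplier method. The key idea is to pair the equation \eqref{eq:reseq} against two families of multipliers—the solution itself $\overline{v}$ and a first-order radial multiplier
\[
  M_\phi v := \phi(|x|)\,\partial_r^A v + \psi(|x|) v, \qquad \partial_r^A := \widehat x \cdot \partial^A,
\]
with radial profiles $\phi,\psi$ to be chosen—and to take real and imaginary parts of the resulting integral identities. Using the covariant commutator $[\partial_j^A,\partial_k^A]=iB_{jk}$ and integration by parts, the identity for $\mathrm{Re}\int(\Delta_A v)\overline{M_\phi v}\,dx$ should produce three positive quadratic forms: in $|\partial_r^A v|^2$ with weight $\phi'$, in the tangential covariant gradient $|\partial_\tau^A v|^2$ with weight $\phi/r$, and in $|v|^2$ with a weight involving $\psi$, $\phi'$, $\Delta\phi$ and the dimension; plus a magnetic commutator remainder proportional to $\int\phi\,\widehat B\cdot\mathrm{Im}(\overline v\,\partial^A v)$ and a $z$-coupling from the zeroth-order term.

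I would then take $\phi$ to be a dyadic profile localized at scale $R$, for instance $\phi_R(r)=r/(r+R)$, with $\psi_R$ chosen to make the zeroth-order coefficient manifestly nonnegative. After taking $\sup_{R>0}$ of the resulting identities, the sphere-type contribution reconstructs $\|v\|_{\dot X}^2$ via the identification $\|v\|_{\dot X}^2\simeq\sup_R R^{-2}\int_{|x|=R}|v|^2\,dS$, while the dimensional piece of the zeroth-order weight produces the $(n-3)\||x|^{-3/2}v\|_{L^2}^2$ term (vanishing at $n=3$). Computing $\mathrm{Re}\int z v\,\overline{M_\phi v}\,dx$ after integration by parts, and using $|\Re z|\gg 1$ to control sign changes, produces $|z|\|v\|_{\dot Y}^2$, again through the dyadic supremum. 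The control $\|\partial^A v\|_{\dot Y}^2$ follows by collecting the interior kinetic-energy terms at all scales, using the pairing against $\overline v$ as an auxiliary $L^2_{\mathrm{loc}}$ bound.

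The main obstacle is the absorption of the perturbative contributions. The forcing $f$ is handled by the $\dot Y^*\!-\!\dot Y$ duality and Cauchy--Schwarz, producing a $\|f\|_{\dot Y^*}^2$ term after splitting. The short-range terms $W_S$ and $Z$ are absorbed directly by Hölder in the $\ell^p L^q L^r$ scales of \eqref{eq:lpLp}, using the absolute smallness of $\||x|^{3/2}W_S\|_{\ell^1 L^2 L^\infty}+\||x|Z\|_{\ell^1 L^\infty}\le\sigma_0$. The delicate terms are $\widehat B$ and $W_L$, which are not small but only bounded. For $\widehat B$ I expect a Cauchy--Schwarz bound of the schematic form
\[
  \Big|\!\int\phi\,\widehat B\cdot\mathrm{Im}(\overline v\,\partial^A v)\Big|\lesssim \||x|\widehat B\|_{\ell^1L^\infty}\,\|\partial^A v\|_{\dot Y}\,\||x|^{-1/2}v\|_{\ell^\infty L^2},
\]
and combining the last factor with the good term $|z|^{1/2}\|v\|_{\dot Y}$ via the arithmetic--geometric inequality produces a coefficient proportional to $\||x|\widehat B\|_{\ell^1L^\infty}^2/|\Re z|$ multiplying a good term; the analogous treatment of $W_L$ generates a coefficient $\||x|W_L\|_{\ell^1L^\infty}/|\Re z|$. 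The lower bound on $|\Re z|$ in \eqref{eq:asscoeff} is precisely what forces both coefficients below $1/2$, which then closes the estimate by absorption.
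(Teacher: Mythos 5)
Your proposal follows essentially the same route as the paper: the same pair of Morawetz multipliers (a first--order radial multiplier plus a zeroth--order one), dyadic radial profiles with a supremum over the scale $R$ reconstructing the $\dot X$ and $\dot Y$ norms, duality for $f$, smallness for $W_S,Z$, and the same absorption mechanism in which the non--small contributions of $\widehat B$ and $W_L$ are traded against $|\Re z|\,\|v\|_{\dot Y}^{2}$ via Cauchy--Schwarz and the lower bound on $|\Re z|$. The one ingredient you do not address is the term generated by $\Im z$, namely $2\,\Im z\,\Im(v\,\partial\psi\cdot\overline{\partial^{A}v})$, which after integration involves the global norms $\|v\|_{L^{2}}\|\partial^{A}v\|_{L^{2}}$ that are \emph{not} controlled by $\dot X,\dot Y$; the paper disposes of it by first deriving, from the multiplier $\overline v$ alone, the global identities $\Im z\,\|v\|_{L^{2}}^{2}=\Im\int g\overline v$ and $\|\partial^{A}v\|_{L^{2}}^{2}=\Re z\,\|v\|_{L^{2}}^{2}-\Re\int g\overline v$, which reduce that term to $(|\Re z|+|\Im z|)^{1/2}\|g\overline v\|_{L^{1}}$ and let it be absorbed like the others.
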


\begin{remark}[]\label{rem:deAtode}
  Under a weak additional assumption on $A$,
  the norm $\|\partial^{A}v\|_{\dot Y}$ in
  \eqref{eq:estlargela} can be replaced by 
  $\|\partial v\|_{\dot Y}$, thanks to the following
\end{remark}

\begin{lemma}[]\label{lem:estdeAde}
  Assume $n\ge3$ and $A\in\ell^{\infty}L^{n}$.
  Then the following estimate holds
  \begin{equation}\label{eq:deAtode}
    \|\partial v\|_{\dot Y}
    \lesssim
    (1+\|A\|_{\ell^{\infty}L^{n}})
    \Bigl[\|\partial^{A}v\|_{\dot Y}+\|v\|_{\dot X}\Bigr].
  \end{equation}
\end{lemma}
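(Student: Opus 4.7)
The plan is straightforward: write $\partial v=\partial^{A}v-iAv$ so that by the triangle inequality
\[
\|\partial v\|_{\dot Y}\le \|\partial^{A}v\|_{\dot Y}+\|Av\|_{\dot Y},
\]
and it suffices to prove
\[
\|Av\|_{\dot Y}\lesssim \|A\|_{\ell^{\infty}L^{n}}\bigl(\|\partial^{A}v\|_{\dot Y}+\|v\|_{\dot X}\bigr).
\]
I would estimate $\|Av\|_{\dot Y}$ annulus by annulus, using H\"older to pull out $\|A\|_{\ell^{\infty}L^{n}}$ and then control the remaining $L^{2n/(n-2)}$ norm of $v$ by a cut-off Sobolev inequality, with the diamagnetic inequality bringing in $\partial^{A}v$ rather than $\partial v$ on the right-hand side.

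Concretely, on the dyadic ring $C_{j}=\{2^{j}\le|x|<2^{j+1}\}$, the equivalence $\|Av\|_{\dot Y}\simeq\sup_{j}2^{-j/2}\|Av\|_{L^{2}(C_{j})}$ together with H\"older with exponents $n$ and $2n/(n-2)$ gives
\[
2^{-j/2}\|Av\|_{L^{2}(C_{j})}\le \|A\|_{L^{n}(C_{j})}\cdot 2^{-j/2}\|v\|_{L^{2n/(n-2)}(C_{j})}\le \|A\|_{\ell^{\infty}L^{n}}\cdot 2^{-j/2}\|v\|_{L^{2n/(n-2)}(C_{j})}.
\]
To handle the remaining factor I would choose a smooth cut-off $\chi_{j}\equiv1$ on $C_{j}$, supported in a mildly enlarged ring $\tilde C_{j}$ consisting of a bounded number of neighbouring dyadic blocks, with $|\partial\chi_{j}|\lesssim 2^{-j}$. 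Combining the diamagnetic inequality $|\partial|\chi_{j}v||\le|\partial^{A}(\chi_{j}v)|$ with the standard Sobolev embedding $\dot H^{1}\hookrightarrow L^{2n/(n-2)}$ applied to $|\chi_{j}v|$, and then expanding $\partial^{A}(\chi_{j}v)=\chi_{j}\partial^{A}v+(\partial\chi_{j})v$, I obtain
\[
\|v\|_{L^{2n/(n-2)}(C_{j})}\lesssim \|\partial^{A}v\|_{L^{2}(\tilde C_{j})}+2^{-j}\|v\|_{L^{2}(\tilde C_{j})}.
\]

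Multiplying by $2^{-j/2}$, the first term is directly controlled by $\|\partial^{A}v\|_{\dot Y}$, since $\tilde C_{j}$ is a bounded union of dyadic rings. For the second, the radial--angular description $\|v\|_{\dot X}\simeq\||x|^{-1}v\|_{\ell^{\infty}L^{\infty}L^{2}}$ yields
\[
2^{-3j/2}\|v\|_{L^{2}(\tilde C_{j})}\lesssim 2^{-j}\sup_{\rho\sim 2^{j}}\Bigl(\int_{|x|=\rho}|v|^{2}\,dS\Bigr)^{1/2}\lesssim\|v\|_{\dot X},
\]
and taking the supremum in $j$ closes the argument. The only mildly delicate step is justifying the diamagnetic inequality for $\chi_{j}v$ at the current level of regularity on $A$; but $A\in\ell^{\infty}L^{n}$ is more than enough to ensure $A\chi_{j}v\in L^{2}$, so Kato's classical distributional argument applies without modification, and this is a routine verification rather than a genuine obstacle.
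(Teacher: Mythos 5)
Your proof is correct and follows essentially the same route as the paper's: decompose $\partial v$ into $\partial^{A}v$ and $Av$, estimate $Av$ dyadically by H\"older against $\|A\|_{\ell^{\infty}L^{n}}$, then control the localized $L^{2n/(n-2)}$ norm of $v$ via the diamagnetic inequality plus Sobolev for $|\chi_{j}v|$, and finally absorb the cutoff commutator into $\|v\|_{\dot X}$. The only cosmetic difference is the order of operations — you split $\partial v=\partial^{A}v-iAv$ at the $\dot Y$ level first and only then localize the $Av$ piece, whereas the paper localizes both terms simultaneously with the cutoff $\phi_{j}$ — but the estimates and the ideas (H\"older, cutoff Sobolev, diamagnetic, trace bound for the commutator) are identical.
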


\begin{proof}
  Let $C_{j}$ be the spherical shell $2^{j}\le|x|\le 2^{j+1}$
  and $\widetilde{C}_{j}=C_{j-1}\cup C_{j}\cup C_{j+1}$.
  Let $\phi$ be a nonnegative
  cutoff function equal to $1$ on $C_{j}$
  and vanishing outside $\widetilde{C}_{j}$, and let
  $\phi_{j}(x)=\phi(2^{-j}x)$. Then we can write
  \begin{equation*}
    \|\partial v\|_{L^{2}(C_{j})}\le
    \|\phi_{j}\partial v\|_{L^{2}}\le
    \|\phi_{j}\partial^{A} v\|_{L^{2}}
    +
    \|\phi_{j}Av\|_{L^{2}}
  \end{equation*}
  By H\"{o}lder's inequality and Sobolev embedding we have
  \begin{equation*}
    \|\phi_{j}Av\|_{L^{2}}\le
    \|A\|_{L^{n}(\widetilde{C}_{j})}
    \|\phi_{j}v\|_{L^{2}}
    \lesssim
    \|A\|_{\ell^{\infty}L^{n}}
    \|\phi_{j}v\|_{L^{\frac{2n}{n-2}}}
    \lesssim
    \|A\|_{\ell^{\infty}L^{n}}
    \|\partial(\phi_{j}|v|)\|_{L^{2}}.
  \end{equation*}
  We expand the last term as
  \begin{equation*}
    \|\partial(\phi_{j}v)\|_{L^{2}}
    \le
    \|(\partial\phi_{j})|v|\|_{L^{2}}
    +
    \|\phi_{j}(\partial|v|)\|_{L^{2}}.
  \end{equation*}
  We note that $|\partial \phi_{j}|\lesssim 2^{-j}$
  and we recall the pointwise diamagnetic inequality
  \begin{equation*}
    |\partial|v||\le|\partial^{A}v|
  \end{equation*}
  valid since $A\in L^{2}_{loc}$. Then we can write
  \begin{equation*}
    \|\partial(\phi_{j}v)\|_{L^{2}}
    \lesssim
    2^{-j}\|v\|_{L^{2}(\widetilde{C}_{j})}
    +
    \|\partial^{A}v\|_{L^{2}(\widetilde{C}_{j})}
    \lesssim
    2^{-j/2}\|v\|_{L^{\infty} L^{2}(\widetilde{C}_{j})}
    +
    \|\partial^{A}v\|_{L^{2}(\widetilde{C}_{j})}.
  \end{equation*}
  Summing up, we have proved
  \begin{equation*}
    \|\partial v\|_{L^{2}(C_{j})}
    \lesssim
    (1+\|A\|_{\ell^{\infty}L^{n}})
    \left[
    \|\partial^{A}v\|_{L^{2}(\widetilde{C}_{j})}
    +
    2^{-j/2}\|v\|_{L^{\infty} L^{2}(\widetilde{C}_{j})}.
    \right]
  \end{equation*}
  Multiplying both sides by $2^{-j/2}$ and taking the sup
  in $j\in \mathbb{Z}$ we get the claim.
\end{proof}

\subsection{Formal identities}
\label{sub:for_ide}
In the course of the proof we shall reserve the symbols
\begin{equation*}
  \lambda=\Re z,
  \qquad
  \epsilon=\Im z
\end{equation*}
for the components of the frequency $z=\lambda+i \epsilon$
in \eqref{eq:reseq}.

We recall two formal identities which are a special
case of the identities in
\cite{CacciafestaDAnconaLuca16-a}
(see also \cite{CacciafestaDAnconaLuca16-b}):
for any real valued weigths $\phi(x)$, $\psi(x)$, we have
(using implicit summation)
\begin{equation}\label{eq:id1}
\begin{split}
  \Re \partial_{j}Q_{j}
  =
  \Re(\Delta_{A}w+(\lambda+i \epsilon) w)
  \overline{[\Delta_{A},\psi]w}
  &
  \textstyle
  -\frac12 \Delta^{2}\psi|w|^{2}
  +2 \partial^{A}_{j}w\, (\partial_{j}\partial_{k}\psi) \,
  \overline{\partial^{A}_{k}w}
  \\
  &
  +2\Im( \overline{w}\,B_{jk}\,\partial^{A}_{j}w \,\partial_{k}\psi)
  +2 \epsilon
  \Im(w\,\partial_{j} \psi \, \overline{\partial^{A}_{j}w})
\end{split}
\end{equation}
and
\begin{equation}\label{eq:id2}
  \textstyle
  \partial_{j}P_{j}=
  \overline{w}\, \Delta_{A}w\,\phi
  +|\partial^{A}w|^{2}\, \phi
  -\frac12 \Delta \phi\,|w|^{2}
  +i\Im(\partial^{A}_{j}w\,\partial_{j}\phi\,\overline{w})
\end{equation}
where the quantities $Q=(Q_{1},\dots,Q_{n})$ and 
$P=(P_{1},\dots,P_{n})$ are defined by
\begin{equation*}
  \textstyle
  Q_{j}:=
  \partial_{j}^{A}w \, \overline{[\Delta_{A},\psi]w}
  -\frac12 \partial_{j}\Delta \psi\, |w|^{2}
  +\partial_{j}\psi\, 
  [\lambda|w|^{2}-|\partial^{A}w|^{2}]
\end{equation*}
\begin{equation*}
  \textstyle
  P_{j}:=\partial^{A}_{j}w\, \overline{w}\,\phi
  -\frac12 \partial_{j}\phi|w|^{2}.
\end{equation*}
Both formulas are easily checked by
expanding the terms in divergence form;
they are actually Morawetz type identities corresponding to
the two multipliers
\begin{equation*}
  \overline{[\Delta_{A},\psi]w}=
  (\Delta \psi)\overline{w}+2 \partial \psi \cdot 
  \overline{\partial^{A}w}
  \qquad\text{and}\qquad 
  \phi \overline{w}.
\end{equation*}

If we write equation \eqref{eq:reseq} in the form
\begin{equation}\label{eq:reducedg}
  \Delta_{A}v+(\lambda+i \epsilon)v=g,
  \quad\text{where}\quad 
  g:=f-Wv-iZ \cdot \partial^{A}v
\end{equation}
and we apply \eqref{eq:id1}, \eqref{eq:id2}, we obtain
\begin{equation}\label{eq:fundid}
  \textstyle
  \Re \partial_{j}
  \{(Q_{j}+P_{j})\}
  =
  I_{\nabla v}+I_{v}+I_{\epsilon}+I_{B}+I_{g}
\end{equation}
where
\begin{equation*}
  I_{\nabla v}=
  2 \partial^{A}_{j}v
  \, (\partial_{j}\partial_{k}\psi) \,
    \overline{\partial^{A}_{k}v}
  + \phi|\partial^{A}v|^{2},
  \qquad
  \textstyle
  I_{v}=
  -\frac12 \Delta(\Delta \psi+\phi)|v|^{2}
  -\lambda \phi|v|^{2}
\end{equation*}
\begin{equation*}
  I_{B}=
  2\Im( \overline{v}\,B_{jk}
  \,\partial^{A}_{j}v 
  \,\partial_{k}\psi),
  \qquad
  I_{\epsilon}=
  2 \epsilon\Im(v\,\partial_{j} 
  \psi \, \overline{\partial^{A}_{j}v}),
\end{equation*}
\begin{equation*}
  I_{g}=
  \Re(g\,\overline{[\Delta_{A},\psi]v}
  +g\,\overline{v}\,\phi)
\end{equation*}
In the following we shall integrate these formulas on 
$\mathbb{R}^{n}$ and use the fact that the boundary terms
vanish after integration. This procedure can be justified
in each case e.g. by approximating $v$ with smooth
compactly supported functions
and then extending the resulting estimates by density.
We omit the details which are standard.

\subsection{Preliminary estimates}\label{sub:pre_est}

Choosing $\phi=1$ in \eqref{eq:id2},
substituting \eqref{eq:reducedg} and taking the
imaginary part, we get
\begin{equation*}
  \epsilon|v|^{2}=
  \Im(g\overline{v})
  -\Im \partial_{j}\{\overline{v}
  \,\partial_{j}^{A}v\}
\end{equation*}
and after integration on $\mathbb{R}^{n}$ we obtain
\begin{equation}\label{eq:aux1}
  \textstyle
  \epsilon\|v\|_{L^{2}}^{2}=
  \Im\int g\overline{v}.
\end{equation}
Taking instead the real part of the same identity 
(also with $\phi=1$) we obtain
\begin{equation*}
  |\partial^{A}v|^{2}=
  \lambda|v|^{2}-\Re(g\overline{v})
  +\Re\partial_{j}\{\overline{v}
  \,\partial_{j}^{A}v\}
\end{equation*}
and after integration
\begin{equation}\label{eq:aux2}
  \textstyle
  \|\partial^{A}v\|_{L^{2}}^{2}
  =
  \lambda\|v\|_{L^{2}}^{2}
  -\Re\int g\overline{v}.
\end{equation}

In order to estimate the term $I_{\epsilon}$ 
in \eqref{eq:fundid} we use
\eqref{eq:aux1} and \eqref{eq:aux2} as follows:
\begin{equation*}
  \textstyle
  \int I_{\epsilon}
  \le
  2 |\epsilon|\|\partial \psi\|_{L^{\infty}}
  \|v\|_{L^{2}}\|\partial^{A}v\|_{L^{2}}
  \le
  C|\epsilon|^{1/2}(\int|g\overline{v}|)^{1/2}
  (|\lambda|\|v\|^{2}_{L^{2}}+\int|g\overline{v}|)
  ^{1/2}
\end{equation*}
with $C=2\|\partial \psi\|_{L^{\infty}}$,
then again by \eqref{eq:aux1}
\begin{equation*}
  \textstyle
  \le
  C(\int|g\overline{v}|)^{1/2}
  (|\lambda|\int|g\overline{v}|
  +|\epsilon|\int|g\overline{v}|)^{1/2}
\end{equation*}
and we arrive at the estimate
\begin{equation}\label{eq:estIep}
  \textstyle
  \int I_{\epsilon}
  \le
  2\|\partial \psi\|_{L^{\infty}}(|\lambda|+|\epsilon|)^{1/2}
  \|g\overline{v}\|_{L^{1}}.
\end{equation}

Another auxiliary estimate will cover 
the (easy) case of negative
$\lambda=-\lambda_{-}\le0$. Write 
the real part of identity \eqref{eq:id2} in the form
\begin{equation*}
  \textstyle
  \lambda_{-}|v|^{2}\phi+|\partial^{A}v|^{2}\phi
  -\frac12 \Delta \phi|v|^{2}
  =
  \sum_{\alpha}
  \partial_{j}\Re P_{j}
  -\Re(g_{\alpha}\overline{v}_{\alpha})\phi
\end{equation*}
and choose the radial weight
\begin{equation*}
  \textstyle
  \phi=\frac{1}{|x|\vee R}
  \quad\implies\quad
  \phi'=-\frac{1}{|x|^{2}}\one{|x|>R},
  \quad
  \phi''=-\frac{1}{R^{2}}\delta_{|x|=R}+
    \frac{2}{|x|^{3}}\one{|x|>R}.
\end{equation*}
Note that
\begin{equation*}
  \textstyle
  -\Delta \phi=
  \frac{1}{R^{2}}\delta_{|x|=R}
  +\frac{n-3}{|x|^{3}}\one{|x|>R}.
\end{equation*}
Integrating over $\mathbb{R}^{n}$ and taking the
supremum over $R>0$ we obtain the estimate for the case of negative
$\lambda=-\lambda_{-}\le0$
\begin{equation}\label{eq:aux4}
  \textstyle
  \lambda_{-}\|v\|_{\dot Y}^{2}
  +
  \|\partial^{A}v\|_{\dot Y}^{2}
  +
  \frac12\|v\|_{\dot X}^{2}
  + \frac{n-3}{2}\||x|^{-3/2}v\|_{L^{2}}^{2}
  \le
  \||x|^{-1}g\overline{v}\|_{L^{1}}.
\end{equation}

\subsection{The main terms}\label{sub:mai_est}

In the following we assume 
$|\epsilon|\le1$ and $\lambda\ge2$.
We choose in \eqref{eq:fundid}, for arbitrary $R>0$,
\begin{equation}\label{eq:ourpsi}
  \psi=
  \frac{1}{2R}|x|^{2}\one{|x|\le R}+|x|\one{|x|>R},
  \qquad
  \phi=-\frac{1}{R}\one{|x|\le R}.
\end{equation}
We have then
\begin{equation}\label{eq:Apsifi}
  \psi'=\frac{|x|}{|x|\vee R},
  \qquad
  \psi''=
  \frac1R\one{|x|\le R},
  \qquad
  \Delta \psi+\phi=\frac{n-1}{|x|\vee R},
\end{equation}
\begin{equation*}
  \textstyle
  \Delta(\Delta \psi+\phi)=
    -\frac{n-1}{R^{2}}
    \delta_{|x|=R}
    -\frac{(n-1)(n-3)}{|x|^{3}}
    \one{|x|>R}
\end{equation*}
This implies
\begin{equation}\label{eq:estIv}
  \textstyle
  3
  \sup_{R>0}\int I_{v}
  \ge
  \frac{n-1}{2}\|v\|_{\dot X}^{2}
  + (n-3)\||x|^{-3/2}v\|^{2}_{L^{2}}
  + \lambda\|v\|_{\dot Y}^{2}
\end{equation}

Next we can write, since $\psi$ is radial,
\begin{equation*}
  \textstyle
  2 \partial^{A}_{j}v\, (\partial_{j}\partial_{k}\psi) \,
    \overline{\partial^{A}_{k}v}
  =
  2\psi''
  \left|\widehat{x}\cdot \partial^{A}v\right| ^{2}
  +
  2\frac{\psi'}{|x|}
  \left[|\partial^{A}v|^{2}
    -\left|\widehat{x}\cdot \partial^{A}v\right| ^{2}\right]
  \ge
  \frac{2}{R}\one{|x|<R}|\partial^{A}v|^{2}.
\end{equation*}
This implies
\begin{equation}\label{eq:estInav}
  \textstyle
  \sup_{R>0}
  \int I_{\nabla v}
  \ge
  2\|\partial^{A}v\|_{\dot Y}^{2}.
\end{equation}

Further we have, since 
$B_{jk}\partial_{k}\psi
  =B_{jk}\widehat{x}_{k}\psi'=\widehat{B}_{j}\psi'$,
\begin{equation*}
  |I_{B}|\le \frac{2|x|}{|x| \vee R}|v||\partial^{A}v||\widehat{B}|
  \le 2|v||\partial^{A}v||\widehat{B}|
\end{equation*}
which implies
\begin{equation*}
  \textstyle
  \int |I_{B}|\le
  2
  \||x|\widehat{B}\|_{\ell^{1}L^{\infty}}
  \||x|^{-1/2}\partial^{A}v\|_{\ell^{\infty}L^{2}}
  \||x|^{-1/2}v\|_{\ell^{\infty}L^{2}}
  =
  2
  \||x|\widehat{B}\|_{\ell^{1}L^{\infty}}
  \|\partial^{A}v\|_{\dot Y}
  \|v\|_{\dot Y}
\end{equation*}
and by Cauchy--Schwartz, for any $\delta>0$,
\begin{equation}\label{eq:estIB}
  \textstyle
  \int |I_{B}|\le
  \delta\|\partial^{A}v\|_{\dot Y}^{2}
  +
  \delta^{-1}
  \||x|\widehat{B}\|_{\ell^{1}L^{\infty}}^{2}
  \|v\|_{\dot Y}^{2}.
\end{equation}

Finally, since $|\Delta \psi+\phi|\le(n-1)|x|^{-1}$ and
$|\partial \psi|\le1$, we have
\begin{equation}\label{eq:estIg}
  \textstyle
  \int|I_{g}|\le
  (n-1)\||x|^{-1}g\overline{v}\|_{L^{1}}
  +
  2\|g\overline{\partial^{A}v}\|_{L^{1}}.
\end{equation}

Summing up, by integrating identity \eqref{eq:fundid} over
$\mathbb{R}^{n}$ and using estimates
\eqref{eq:estIep}
\eqref{eq:estIv}, \eqref{eq:estInav}, \eqref{eq:estIB}
and \eqref{eq:estIg} we obtain
(recall that $|\partial \psi|\le1$; recall also
that $\lambda\ge2$ and $|\epsilon|\le1$ so that
$|\epsilon|+|\lambda|\lesssim \lambda$)
\begin{equation*}
\begin{split}
  \textstyle
  \|v\|_{\dot X}^{2}
  +
  &
  (n-3)\||x|^{-3/2}v\|_{L^{2}}^{2}
  + 
  \lambda\|v\|_{\dot Y}^{2}
  +
  \|\partial^{A}v\|_{\dot Y}^{2}
  \lesssim
  \\
  &
  \lesssim
  \delta\|\partial^{A}v\|_{\dot Y}^{2}
  +
  \delta^{-1}
  \||x|\widehat{B}\|_{\ell^{1}L^{\infty}}^{2}
  \|v\|_{\dot Y}^{2}
  +
  \lambda^{1/2}
    \|g\overline{v}\|_{L^{1}}
  +
  \||x|^{-1}g\overline{v}\|_{L^{1}}
  +
  \|g\overline{\partial^{A}v}\|_{L^{1}}
\end{split}
\end{equation*}
where $\delta>0$ is arbitrary
and the implicit constant depends only on $n$.
Note now that if $\delta$ is chosen small enough with 
respect to $n$ and we assume
\begin{equation}\label{eq:firstla}
  \lambda\ge c(n)
  \||x|\widehat{B}\|_{\ell^{1}L^{\infty}}^{2}
\end{equation}
for a suitably large $c(n)$, we can absorb two terms at the
right and we get the estimate
\begin{equation}\label{eq:firststep}
  \textstyle
  \|v\|_{\dot X}^{2}
  +
  \mu_{n}\|\frac{v}{|x|^{3/2}}\|_{L^{2}}^{2}
  +
  \lambda\|v\|_{\dot Y}^{2}
  +
  \|\partial^{A}v\|_{\dot Y}^{2}
  \le
  c_{0}\left(
  \|\frac{g\overline{v}}{|x|}\|_{L^{1}}
  +
  \|g\overline{\partial^{A}v}\|_{L^{1}}
  +  \lambda^{\frac12}
    \|g\overline{v}\|_{L^{1}}
    \right)
\end{equation}
where $c_{0}\ge1$ is a constant depending only on $n$.

\subsection{Conclusion} \label{sub:the_arg_for_lar_fre}

We now substitute in estimate \eqref{eq:firststep}
\begin{equation*}
  g=f-W(x)v- Z(x)\cdot\partial^{A}v
\end{equation*}
(see \eqref{eq:reducedg}).
Consider the terms at the right in 
\eqref{eq:firststep}, recalling that
\begin{equation*}
  W=W_{S}+W_{L}.
\end{equation*}
We denote by $\gamma,\Gamma$ the quantities
\begin{equation*}
  \gamma:=\||x|^{3/2}W_{S}\|_{\ell^{1}L^{2}L^{\infty}}
  +\||x|Z\|_{\ell^{1}L^{\infty}},
  \qquad
  \Gamma:=\||x|W_{L}\|_{\ell^{1}L^{\infty}}.
\end{equation*}
Then we have
\begin{equation*}
  \||x|^{-1}gv\|_{L^{1}}
  \le
  \||x|^{-1}W(x)v^{2}\|_{L^{1}}
  +
  \||x|^{-1}Z(x)\cdot \partial^{A}v\|_{L^{1}}
  +
  \||x|^{-1}f\overline{v}\|_{L^{1}}
\end{equation*}
and, for any $\delta>0$,
\begin{equation*}
\begin{split}
  \||x|^{-1}W(x)v^{2}\|_{L^{1}}
  \le&
  \||x|^{1/2}W_{L}\|_{\ell^{1}L^{2}L^{\infty}}
  \|v\|_{\dot X}
  \|v\|_{\dot Y}
  +
  \||x|W_{S}\|_{\ell^{1}L^{1}L^{\infty}}\|v\|_{\dot X}^{2}
  \\
  \le&
  (\delta+\gamma)\|v\|_{\dot X}^{2}+
  \delta^{-1}\Gamma^{2}\|v\|_{\dot Y}
\end{split}
\end{equation*}
\begin{equation*}
  \||x|^{-1}Z(x)\cdot \partial^{A}v\|_{L^{1}}
  \le
  \||x|Z\|_{\ell^{1}L^{\infty}}
  \|v\|_{\dot Y}\|\partial^{A}v\|_{\dot Y}
  \le
  \gamma^{2}\|\partial^{A}v\|_{\dot Y}^{2}
  +
  \gamma^{2}\|v\|_{\dot Y}^{2}
\end{equation*}
\begin{equation*}
  \||x|^{-1}f\overline{v}\|_{L^{1}}
  \le
  \|f\|_{\dot Y^{*}}\|v\|_{\dot X}
  \le \delta\|v\|_{\dot X}^{2}
  +\delta^{-1}\|f\|_{\dot Y^{*}}^{2}.
\end{equation*}
In a similar way we have
\begin{equation*}
  \|g\partial^{A} v\|_{L^{1}}
  \le
  \|W(x)v\partial^{A} v\|_{L^{1}}
  +
  \|Z(x) (\partial^{A}v)^{2}\|_{L^{1}}
  +
  \|f\overline{\partial^{A}v}\|_{L^{1}},
\end{equation*}
and
\begin{equation*}
\begin{split}
  \|W(x)v\partial^{A} v\|_{L^{1}}
  \le&
  \||x|W_{L}\|_{\ell^{1}L^{\infty}}
  \|v\|_{\dot Y}\|\partial^{A}v\|_{\dot Y}
  +
  \||x|^{3/2}W_{S}\|_{\ell^{1}L^{2}L^{\infty}}
  \|v\|_{\dot X}\|\partial^{A}v\|_{\dot Y}
  \\
  \le&
  2\delta\|\partial^{A}v\|_{\dot Y}^{2}
  +
  \delta^{-1}\Gamma^{2}\|v\|_{\dot Y}^{2}
  +
  \delta^{-1}\gamma^{2}\|v\|_{\dot X}^{2},
\end{split}
\end{equation*}
\begin{equation*}
  \|Z(x) (\partial^{A}v)^{2}\|_{L^{1}}
  \le
  \||x|Z\|_{\ell^{1}L^{\infty}}
  \|\partial^{A}v\|_{\dot Y}^{2}
  \le
  \gamma\|\partial^{A}v\|_{\dot Y}^{2},
\end{equation*}
\begin{equation*}
  \|f\overline{\partial^{A}v}\|_{L^{1}}
  \le
  \delta\|\partial^{A}v\|_{\dot Y}^{2}
  +
  \delta^{-1}\|f\|_{\dot Y^{*}}^{2}.
\end{equation*}
Finally we have
\begin{equation*}
  \lambda^{1/2}\|g \overline{v}\|_{L^{1}}
  \le
  \lambda^{1/2}\|W v^{2}\|_{L^{1}}
  +
  \lambda^{1/2}\|Z v\partial^{A}v\|_{L^{1}}
  +
  \lambda^{1/2}\|f \overline{v}\|_{L^{1}}
\end{equation*}
and
\begin{equation*}
\begin{split}
  \lambda^{1/2}\|W v^{2}\|_{L^{1}}
  \le&
  \lambda^{1/2}
  \||x|W_{L}\|_{\ell^{1}L^{\infty}}\|v\|_{\dot Y}^{2}
  +
  \lambda^{1/2}
  \||x|^{3/2}W_{S}\|_{\ell^{1}L^{2}L^{\infty}}
  \|v\|_{\dot X}\|v\|_{\dot Y}
  \\
  \le&
  (\lambda^{1/2}\Gamma+\lambda \gamma)\|v\|_{\dot Y}^{2}
  +
  \gamma\|v\|_{\dot X}^{2},
\end{split}
\end{equation*}
\begin{equation*}
  \lambda^{1/2}\|Z v\partial^{A}v\|_{L^{1}}
  \le
  \lambda^{1/2}
  \||x|Z\|_{\ell^{1}L^{\infty}}
  \|v\|_{\dot Y}\|\partial^{A}v\|_{\dot Y}
  \le
  \lambda \gamma\|v\|_{\dot Y}^{2}
  +
  \gamma \|\partial^{A}v\|_{\dot Y}^{2},
\end{equation*}
\begin{equation*}
  \lambda^{1/2}\|f \overline{v}\|_{L^{1}}
  \le
  \delta \lambda\|v\|_{\dot Y}^{2}
  +
  \delta^{-1}\|f\|_{\dot Y^{*}}^{2}.
\end{equation*}
Summing up, we get
\begin{equation*}
\begin{split}
  \||x|^{-1}gv\|_{L^{1}}
  +&
  \|g\partial^{A} v\|_{L^{1}}
  +
  \lambda^{1/2}\|g \overline{v}\|_{L^{1}}
  \le
  (\delta+2\gamma+\delta^{-1}\gamma^{2})\|v\|_{\dot X}^{2}
  \\
  +
  (2\delta^{-1}
  &
  \Gamma^{2}+
  \gamma^{2}+
    \lambda^{1/2}\Gamma+2\lambda \gamma+\delta \lambda)
    \|v\|_{\dot Y}^{2}
  +
  (\gamma^{2}+4\delta+2\gamma)\|\partial^{A}v\|_{\dot Y}^{2}
  +
  3\delta^{-1}\|f\|_{\dot Y^{*}}^{2}
\end{split}
\end{equation*}
Recalling that $c_{0}\ge1$ is the constant in
\eqref{eq:firststep}, depending only on $n$,
we require that
\begin{equation}\label{eq:constrconst}
  \textstyle
  \delta=\frac{1}{16c_{0}},
  \qquad
  \gamma\le\frac{1}{16c_{0}},
  \qquad
  |\lambda|\ge2^{8}c_{0}^{2}\Gamma^{2}+2
  +c(n)\||x|\widehat{B}\|_{\ell^{1}L^{\infty}}^{2}
\end{equation}
(note that this implies also \eqref{eq:firstla}
and $\lambda\ge2$)
and one checks that
\begin{equation*}
  \textstyle
  \delta+2 \gamma+\delta^{-1}\gamma^{2}\le \frac{1}{2c_{0}},
  \qquad
  \gamma^{2}+4\delta+2\gamma\le \frac{1}{2c_{0}}
\end{equation*}
and
\begin{equation*}
  \textstyle
  2\delta^{-1} \Gamma^{2}+ \gamma^{2}+
    \lambda^{1/2}\Gamma+2\lambda \gamma+\delta \lambda
  \le \frac{\lambda}{2c_{0}}.
\end{equation*}
Thus with the choices \eqref{eq:constrconst} we have
for positive $\lambda$
\begin{equation*}
  \textstyle
  \||x|^{-1}gv\|_{L^{1}}
  +
  \|g\partial^{A} v\|_{L^{1}}
  +
  \lambda^{1/2}\|g \overline{v}\|_{L^{1}}
  \le
  \frac{1}{2c_{0}}\|v\|_{\dot X}^{2}
  +
  \frac{\lambda}{2c_{0}} \|v\|_{\dot Y}^{2}
  +
  \frac{1}{2c_{0}}\|\partial^{A}v\|_{\dot Y}^{2}
  +
  3\delta^{-1}\|f\|_{\dot Y^{*}}^{2}
\end{equation*}
and plugging this into \eqref{eq:firststep}, and absorbing
the first three terms at the right from the left side of the 
inequality, we conclude that
\begin{equation}\label{eq:secondstep}
  \textstyle
  \|v\|_{\dot X}^{2}
  +
  \mu_{n}\|\frac{v}{|x|^{3/2}}\|_{L^{2}}^{2}
  +
  \lambda\|v\|_{\dot Y}^{2}
  +
  \|\partial^{A}v\|_{\dot Y}^{2}
  \le
  c_{1}\|f\|_{\dot Y^{*}}^{2}
\end{equation}
with $c_{1}$ a constant depending only on $n$.

Note that for \emph{negative} $\lambda$, starting from
estimate \eqref{eq:aux4} instead of \eqref{eq:firststep}
and applying the same argument, we obtain a similar 
estimate, provided $\lambda$ satisfies
\eqref{eq:constrconst}. Since out assumptions imply
$|\epsilon|\le|\lambda|$, we see that the proof of
Theorem \ref{the:resestlarge} is concluded.

\section{The resolvent estimate for small frequencies}
\label{sec:the_sma_fre_reg}

We now consider the remaining case of small requencies;
more precisely, we shall prove an estimate for all $z$
which is uniform for $z$ varying in any bounded region.
Define an operator $H$ as
\begin{equation}\label{eq:defL}
  Hv:=
  -\Delta v
  -
  W(x)v
  -
  iA \cdot\partial_{j}v
  -
  i\partial \cdot(A(x)v)
\end{equation}
with $W:\mathbb{R}^{n}\to \mathbb{R}$,
$A:\mathbb{R}^{n}\to \mathbb{R}^{n}$, and assume that $H$
is selfadjoint on $L^{2}(\mathbb{R}^{n};\mathbb{C}^{N})$.
In order to estimate the resolvent operator of $H$
\begin{equation*}
  R(z):=(H-z)^{-1}=
  (-\Delta-W-iA \cdot \partial-i \partial \cdot A-z)^{-1}
\end{equation*}
we use the (Lippmann--Schwinger) formula
\begin{equation}\label{eq:LSE}
  R(z)=R_{0}(z)(I-K(z))^{-1},
  \qquad
  K(z):=[W+iA \cdot \partial+i \partial \cdot A]R_{0}(z)
\end{equation}
expressing $R(z)$
in terms of the free resolvent
\begin{equation*}
  R_{0}(z)=(-\Delta-z)^{-1}.
\end{equation*}

We recall a few, more or less standard, facts on the
free resolvent $R_{0}(z)$. For 
$z\in \mathbb{C} \setminus[0,+\infty)$, $R_{0}(z)$
is a holomorphic map with values in the space of bounded 
operators $L^{2}\to H^{2}$ and satisfies an estimate
\begin{equation}\label{eq:freeest}
  \|R_{0}(z)f\|_{\dot X}
  +
  |z|^{\frac12}\|R_{0}(z)f\|_{\dot Y}
  +
  \|\partial R_{0}(z)f\|_{\dot Y}
  \lesssim
  \|f\|_{\dot Y^{*}}
\end{equation}
with an implicit constant independent of $z$
(sharp resolvent estimates can probably be traced
back to \cite{KenigPonceVega93-a}. A complete proof is 
given e.g. in
\cite{CacciafestaDAnconaLuca16-a}; actually
\eqref{eq:freeest} is a special case of the
computations in the previous Section for zero potentials, in which
case the proof given above works with no restriction on the
frequency).
When $z$ approaches the spectrum
of the Laplacian $\sigma(-\Delta)=[0,+\infty)$, it is possible
to define two limit operators
\begin{equation*}
  R_{0}(\lambda\pm i 0)=
  \lim_{\epsilon \downarrow0}R_{0}(\lambda\pm i \epsilon),
  \qquad
  \epsilon>0, \lambda\ge0
\end{equation*}
but the two limits are different if $\lambda>0$. These limits
exist in the norm of bounded operators
from the weighted $L^{2}_{s}$ space with norm
$\|\bra{x}^{s}f\|_{L^{2}}$ to the weighted Sobolev space
$H^{2}_{-s'}$ with norm 
$\sum_{|\alpha|\le2}\|\bra{x}^{-s'}\partial^{\alpha}f\|_{L^{2}}$,
for arbitrary $s,s'>1/2$ (see \cite{Agmon75-a}).
Since these spaces are dense in $\dot Y^{*}$ and
$\dot Y$ (or $\dot X$) respectively, and estimate
\eqref{eq:freeest} is uniform in $z$, one obtains that
\eqref{eq:freeest} is valid also for the limit operators
$R_{0}(\lambda\pm i0)$. In the following we shall write
simply $R_{0}(z)$, $z\in \overline{\mathbb{C}^{\pm}}$, 
to denote either one of 
the extended operators $R_{0}(\lambda\pm i \epsilon)$ with
$\epsilon\ge0$, defined on the closed upper (resp.~lower) complex
half--plane. Note also that the map $z \mapsto R_{0}(z)$
is continuous with respect to the operator norm of
bounded operators $L^{2}_{s}\to H^{2}_{-s'}$, 
for every $s,s'>1/2$, and from this fact one easily
obtains that it is also continuous with respect to
the operator norm of bounded operators
from $\dot Y^{*}\to H^{2}_{-s'}$. 

Thus in particular
\begin{equation*}
  R_{0}(z):\dot Y^{*}\to \dot X,
  \qquad
  \partial R_{0}(z):\dot Y^{*}\to \dot Y
\end{equation*}
are uniformly bounded operators for all
$z\in \overline{\mathbb{C}^{\pm}}$;
note also the formula
\begin{equation*}
  \Delta R_{0}(z)=-I-z R_{0}(z).
\end{equation*}
Moreover, for any smooth cutoff
$\phi\in C^{\infty}_{c}(\mathbb{R}^{n})$ and all 
$z\in \overline{\mathbb{C}^{\pm}}$,
the map 
$z\mapsto\phi R_{0}(z)$ is 
continuous w.r.to the norm of bounded operators
$\dot Y^{*}\to H^{2}$,
and hence 
\begin{equation*}
  \phi R_{0}(z):\dot Y^{*}\to L^{2}
  \quad\text{and}\quad 
  \phi \partial R_{0}(z):\dot Y^{*}\to L^{2}
  \quad \text{are compact operators.}
\end{equation*}
Similarly one gets that
$z\mapsto\phi R_{0}(z)$ is 
continuous w.r.to the norm of bounded operators
$\dot Y^{*}\to L^{\infty}_{|x|} L^{2}_{\omega}$ and
\begin{equation*}
  \phi R_{0}(z):\dot Y^{*}\to L^{\infty}_{|x|} L^{2}_{\omega}
  \quad \text{is a compact operator.}
\end{equation*}

In order to invert the operator $I-K(z)$
we shall apply Fredholm theory. The essential step is the
following compactness result:

\begin{lemma}[]\label{lem:compact}
  Let $z\in \overline{\mathbb{C}^{\pm}}$ and assume
  $W,A$ satisfy
  \begin{equation}\label{eq:assWZ}
    N:=
    \||x|^{3/2}(W+i( \partial \cdot A))\|_{\ell^{1}L^{2}L^{\infty}}
    +
    \||x|A\|_{\ell^{1}L^{\infty}}<\infty.
  \end{equation}
  Then
  $K(z)=(W+iA \cdot \partial+i \partial \cdot A)R_{0}(z)$
  is a compact operator on $\dot Y^{*}$,
  and the map $z \mapsto K(z)$ is continuous with respect
  to the norm of bounded operators on $\dot Y^{*}$.
\end{lemma}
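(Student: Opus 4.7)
\medskip
\noindent\textbf{Proof sketch.} The plan is to realize $K(z)$ as a norm limit of compact operators, by localizing the coefficients to compact annuli, and then to deduce both compactness and continuity in $z$ at the same time. First I would rewrite the operator in the more convenient form
\begin{equation*}
  K(z)f = \bigl[(W+i\partial\cdot A)R_{0}(z)\bigr]f
  + 2i\,A\cdot\partial R_{0}(z)f,
\end{equation*}
so that the ``potential'' part only involves the combination $W+i\partial\cdot A$ controlled by the first summand in \eqref{eq:assWZ}, while the ``first order'' part only involves $A$, controlled by the second summand. Pick a smooth cutoff $\phi\in C^{\infty}_{c}(\mathbb{R}^{n}\setminus\{0\})$ with $\sum_{k}\phi(2^{-k}x)\equiv 1$, and for each $N\ge 1$ let $\chi_{N}(x)=\sum_{|k|\le N}\phi(2^{-k}x)$, so that $\chi_{N}$ is supported in the compact annulus $\{2^{-N-1}\le|x|\le 2^{N+1}\}$ and $\chi_{N}\to 1$ pointwise. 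Define the truncated operator $K_{N}(z)$ by replacing $W,A$ with $\chi_{N}W,\chi_{N}A$.

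The first key step is a uniform-in-$z$ tail bound of the form
\begin{equation*}
  \|K(z)-K_{N}(z)\|_{\dot Y^{*}\to\dot Y^{*}}
  \lesssim
  \bigl\||x|^{3/2}(1-\chi_{N})(W+i\partial\cdot A)\bigr\|_{\ell^{1}L^{2}L^{\infty}}
  + \bigl\||x|(1-\chi_{N})A\bigr\|_{\ell^{1}L^{\infty}},
\end{equation*}
which goes to $0$ as $N\to\infty$ by the dominated convergence-in-$\ell^{1}$ implicit in \eqref{eq:assWZ}. To prove this, I would fix $f\in\dot Y^{*}$, set $v=R_{0}(z)f$, and estimate $(W+i\partial\cdot A)v$ and $A\cdot\partial v$ in $\dot Y^{*}\simeq\||x|^{1/2}\cdot\|_{\ell^{1}L^{2}}$ shell by shell. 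On a dyadic shell $C_{j}$ of radius $2^{j}$, separating the $L^{2}_{|x|}L^{\infty}_{\omega}$ norm of the coefficient from the $L^{\infty}_{|x|}L^{2}_{\omega}$ norm of $v$ gives
\begin{equation*}
  \bigl\||x|^{1/2}Wv\bigr\|_{L^{2}(C_{j})}
  \lesssim
  \bigl\||x|^{3/2}W\bigr\|_{L^{2}L^{\infty}(C_{j})}\,\|v\|_{\dot X},
\end{equation*}
and similarly, using $L^{\infty}_{|x|}L^{\infty}_{\omega}$ on $A$ and integrating $|\partial v|^{2}$ on $C_{j}$ (which is controlled by $2^{j}\|\partial v\|_{\dot Y}^{2}$ from the very definition of $\dot Y$),
\begin{equation*}
  \bigl\||x|^{1/2}A\cdot\partial v\bigr\|_{L^{2}(C_{j})}
  \lesssim
  \bigl\||x|A\bigr\|_{L^{\infty}(C_{j})}\,\|\partial v\|_{\dot Y}.
\end{equation*}
Summing in $j$ over the complement of $\spt\chi_{N}$ and applying the free resolvent bound \eqref{eq:freeest} (which holds uniformly up to the real axis) yields the claim; crucially the constants do not depend on $z\in\overline{\mathbb{C}^{\pm}}$.

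The second step is the compactness of each $K_{N}(z)$. Since $\chi_{N}$ is smooth and compactly supported away from the origin, the excerpt already records that $\chi_{N}R_{0}(z)$ and $\chi_{N}\partial R_{0}(z)$ are compact as operators from $\dot Y^{*}$ to $L^{2}$. Multiplication by the bounded functions $W,\partial\cdot A,A$, which are uniformly bounded on the fixed support of $\chi_{N}$ (by splitting $W=W_{S}+W_{L}$ if necessary, or simply using a further cutoff and mollification if pointwise boundedness is not granted by \eqref{eq:assWZ} --- this is the one technical nuisance to be checked), followed by the continuous embedding of compactly supported $L^{2}$ functions into $\dot Y^{*}$, shows that $K_{N}(z)$ is compact on $\dot Y^{*}$. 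Combining with the uniform tail bound yields that $K(z)$ is a uniform limit of compact operators, hence compact. Continuity in $z$ follows by the same uniform approximation argument, since for fixed $N$ the map $z\mapsto\chi_{N}R_{0}(z)$ (respectively $z\mapsto\chi_{N}\partial R_{0}(z)$) is continuous into bounded operators $\dot Y^{*}\to L^{2}$, as recorded in the excerpt.

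The main obstacle I expect is the first step: matching the assumption \eqref{eq:assWZ} exactly with the dyadic Hölder estimates in the $\dot Y^{*}$ norm, and in particular verifying that the coefficients can indeed be truncated in a way that preserves boundedness (the second summand of \eqref{eq:assWZ} gives pointwise bounds on $A$ but the first summand only controls $W+i\partial\cdot A$ in a mixed $L^{2}L^{\infty}$ sense, which is what forces the above shell-by-shell Schwarz splitting rather than a naive $L^{\infty}$ estimate). Everything else is a routine packaging of the free resolvent facts already collected in the excerpt.
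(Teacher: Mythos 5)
Your overall strategy coincides with the paper's: decompose $K(z)$ into a compactly localized piece plus a tail, show the localized piece is compact, show the tail goes to zero in operator norm uniformly in $z$, and deduce compactness and continuity simultaneously. Your tail estimates are correct, and the shell-by-shell splitting you perform
\begin{equation*}
  \bigl\||x|^{1/2}Wv\bigr\|_{L^{2}(C_{j})}
  \lesssim
  \bigl\||x|^{3/2}W\bigr\|_{L^{2}L^{\infty}(C_{j})}\,\|v\|_{\dot X},
  \qquad
  \bigl\||x|^{1/2}A\cdot\partial v\bigr\|_{L^{2}(C_{j})}
  \lesssim
  \bigl\||x|A\bigr\|_{L^{\infty}(C_{j})}\,\|\partial v\|_{\dot Y}
\end{equation*}
is exactly the mechanism the paper uses to make the tail $B_r$ small.

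The gap is in your Step 2, and you have in fact flagged the right spot. You want to run $\chi_{N}R_{0}(z):\dot Y^{*}\to L^{2}$ compact, then multiply by the coefficient as a bounded operator $L^{2}\to L^{2}$, then embed back into $\dot Y^{*}$. This requires $W+i\partial\cdot A$ to be pointwise bounded on the support of $\chi_{N}$. Assumption \eqref{eq:assWZ} does not give this: the first summand only controls $|x|^{3/2}(W+i\partial\cdot A)$ in the mixed $\ell^{1}L^{2}_{|x|}L^{\infty}_{\omega}$ norm, which permits radial $L^{2}$-type singularities, so $W+i\partial\cdot A$ need not be essentially bounded on any fixed annulus. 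The mollification fallback you suggest does not patch this automatically: mollifying the coefficient changes the operator and you would then have to prove operator-norm convergence of the mollified operators, which again requires some quantitative control of $(W+i\partial\cdot A)-(W+i\partial\cdot A)_{\epsilon}$ in a norm matching \eqref{eq:assWZ}; since the radial component is $L^{2}$ and the angular component is $L^{\infty}$, convergence under mollification in this mixed norm is not automatic.

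The paper sidesteps the need for pointwise bounds entirely by choosing a different intermediate space. Instead of $\chi_{r}R_{0}(z):\dot Y^{*}\to L^{2}$, it uses the compactness of $\chi_{r}R_{0}(z):\dot Y^{*}\to L^{\infty}_{|x|}L^{2}_{\omega}$ (which is recorded just before the lemma). Then the chain for the zero-order piece is
\begin{equation*}
  \dot Y^{*}
  \xrightarrow{\ \chi_{r}R_{0}(z)\ } L^{\infty}_{|x|}L^{2}_{\omega}
  \xrightarrow{\ \chi_{s}\cdot\ } \dot X
  \xrightarrow{\ (W+i\partial\cdot A)\cdot\ } \dot Y^{*},
\end{equation*}
where the last arrow is bounded precisely because
$\|(W+i\partial\cdot A)v\|_{\dot Y^{*}}\le \||x|^{3/2}(W+i\partial\cdot A)\|_{\ell^{1}L^{2}L^{\infty}}\|v\|_{\dot X}$, i.e.\ the multiplication operator is $\dot X\to\dot Y^{*}$ bounded under exactly the hypothesis \eqref{eq:assWZ}, no pointwise bound required. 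For the first-order piece the paper uses $\chi_{r}\partial R_{0}(z):\dot Y^{*}\to L^{2}$, which is fine because $A$ \emph{is} pointwise controlled by the second summand in \eqref{eq:assWZ}. So the fix for your argument is to replace your $L^{2}$-valued compactness step by the $L^{\infty}_{|x|}L^{2}_{\omega}$-valued one for the $(W+i\partial\cdot A)$ term, and pair it with the multiplication estimate $\dot X\to\dot Y^{*}$; the rest of your argument then goes through.
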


\begin{proof}
  We decompose $K$
  as follows. Let $\chi\in C^{\infty}_{c}(\mathbb{R}^{n})$
  be a cutoff function equal to 1 for $|x|\le1$ and to 0 for
  $|x|\ge2$. Define for $r>2$
  \begin{equation*}
    \chi_{r}(x)=\chi(x/r)(1-\chi(rx))
  \end{equation*}
  so that $\chi_{r}$ vanishes for $|x|\ge2r$ and also for
  $|x|\le1/r$, and equals 1 when $2/r\le|x|\le r$.
  Then we split
  \begin{equation*}
    K=A_{r}+B_{r}
  \end{equation*}
  where
  \begin{equation*}
    A_{r}(z)=\chi_{r}\cdot K(z),
    \qquad
    B_{r}(z)=(1-\chi_{r})\cdot K(z).
  \end{equation*}

  First we 
  show that $A_{r}$ is a compact operator on $\dot Y^{*}$.
  Indeed, for $s>2r>4$ we have $\chi_{r}\chi_{s}=\chi_{r}$ and
  we can write
  \begin{equation*}
    A_{r}=\chi_{s}A_{r}=
    \chi_{s}(W+i( \partial \cdot A))\chi_{r}R_{0}(z)
    +
    2i \chi_{s}A \cdot \chi_{r}\partial R_{0}(z).
  \end{equation*}
  By the estimate
  \begin{equation}\label{eq:estVN}
    \|(W+i( \partial \cdot A))v\|_{\dot Y^{*}}\le
    \||x|^{3/2}(W+i( \partial \cdot A))\|_{\ell^{1}L^{2}L^{\infty}}
    \|v\|_{\dot X}
    \le
    N\|v\|_{\dot X}
  \end{equation}
  we see that multiplication by $W+i( \partial \cdot A)$ is a bounded operator
  from $\dot X$ to $\dot Y^{*}$.
  Moreover, multiplication by $\chi_{s}$ is a bounded
  operator $L^{\infty}_{|x|}L^{2}_{\omega}\to \dot X$
  and the operator
  $\chi_{r}R_{0}:\dot Y^{*}\to L^{\infty}_{|x|}L^{2}_{\omega}$
  is compact as remarked above. 
  A similar argument applies to the second term in $A_{r}$,
  using the estimate
  \begin{equation}\label{eq:estZN}
    \|Av\|_{\dot Y^{*}}
    \le
    \||x|A\|_{\ell^{1}L^{\infty}}\|v\|_{\dot Y}
    \le N\|v\|_{\dot Y}
  \end{equation}
  and compactness of $\chi_{r}\partial R_{0}:\dot Y^{*}\to L^{2}$.
  Summing up, we obtain that $A_{r}:\dot Y^{*}\to \dot Y^{*}$
  is a compact operator.
  Similarly, we see that $z \mapsto A_{r}(z)$ is continuous
  with respect to the norm of bounded operators on $\dot Y^{*}$.

  Then to conclude the proof it is sufficient to show that
  $B_{r}\to0$ in the norm of bounded operators
  on $\dot Y^{*}$, uniformly in $z$, as $r\to \infty$.
  We have, as in
  \eqref{eq:estVN}--\eqref{eq:estZN},
  \begin{equation*}
    \|B_{r}v\|_{\dot Y^{*}}
    \le
    N_{r}
    (\|R_{0}\|_{\dot Y^{*}\to \dot X}+
    \|\partial R_{0}\|_{\dot Y^{*}\to \dot Y})
    \|v\|_{\dot Y^{*}}
  \end{equation*}
  where
  \begin{equation*}
    N_{r}:=
    \||x|^{3/2}(1-\chi_{r})(W+i( \partial \cdot A))
      \|_{\ell^{1}L^{2} L^{\infty}}
    +
    2\||x|(1-\chi_{r})A\|_{\ell^{1}L^{\infty}}.
  \end{equation*}
  Since $N_{r}\to0$ as $r\to \infty$,
  we obtain that $\|B_{r}\|_{\dot Y^{*}\to \dot Y^{*}}\to0$.
\end{proof}

We now study the injectivity of $I-K(z):\dot Y^{*}\to \dot Y^{*}$.
Note that if $f\in \dot Y^{*}$ satisfies
\begin{equation*}
  (I-K(z))f=0
\end{equation*}
then setting $v=R_{0}(z)$ 
by the properties of $R_{0}(z)$ we have
$v\in H^{1}_{loc} \cap \dot X$, 
$\nabla v\in \dot Y$,
$v\in H^{2}_{loc}(\mathbb{R}^{n}\setminus0)$,
$\Delta v\in \dot Y+\dot Y^{*}$
(or $\Delta v\in\dot Y^{*}$ if $z=0$)
and if $z\neq0$ we have also $v\in\dot Y$.
In particular, $v$ is a solution of the
equation 
\begin{equation*}
  (H-z)v=0.
\end{equation*}
For $z$ outside the spectrum of $H$ it is easy to check
that this implies $v=f=0$:

\begin{lemma}[]\label{lem:eigvcomplex}
  Let $W,A,K(z)$ be as in Lemma \ref{lem:compact} and
  $H=-\Delta-W-iA \cdot \partial-i \partial \cdot A$.
  If $f\in \dot Y^{*}$ satisfies
  \begin{equation*}
    (I-K(z))f=0
  \end{equation*}
  for some $z\not\in \sigma(H)$, then $f=0$.
\end{lemma}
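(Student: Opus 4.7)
The plan is to use a duality argument exploiting that $\bar z \not\in \sigma(H)$ too (by self-adjointness of $H$), so that $R(\bar z) := (H - \bar z)^{-1}$ is a bounded operator from $L^2$ onto $D(H) = H^2$. First I set $v := R_0(z) f$. By the mapping properties of $R_0(z)$ recalled just above, $v \in \dot X \cap H^1_{\mathrm{loc}}$ with $\partial v \in \dot Y$, and $(-\Delta - z) v = f$ distributionally. The hypothesis $(I - K(z)) f = 0$ rewrites as $f = K(z) f = V v$, with $V := W + iA \cdot \partial + i \partial \cdot A$ formally self-adjoint (because $H = -\Delta - V$ is). Substituting back, $(H - z) v = 0$ in distributions.

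To conclude $v = 0$, I would test against an arbitrary $g \in C^\infty_c(\mathbb{R}^n) \subset L^2$. The pairing $\langle v, g\rangle$ is well-defined since $v \in L^2_{\mathrm{loc}}$, and the adjoint relation $R_0(z)^* = R_0(\bar z)$ gives
\[
\langle v, g\rangle = \langle R_0(z) f, g\rangle = \langle f, R_0(\bar z) g\rangle,
\]
interpreted as a $\dot Y^* \times \dot Y$ duality (valid by \eqref{eq:freeest} when $z \ne 0$; the case $z = 0$ is analogous, using $\dot Y^* \times \dot X$). The crucial input is a Lippmann--Schwinger identity for $R(\bar z)$: since $g \in L^2$ and $\bar z$ is in the resolvent set of $H$, the function $u := R(\bar z) g \in H^2$ satisfies $(-\Delta - \bar z) u = g + V u$, whence
\[
R_0(\bar z) g = u - R_0(\bar z) V u = (I - K(z)^*) R(\bar z) g,
\]
where $K(z)^* = R_0(\bar z) V$. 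Substituting into the previous display and transposing $(I - K(z)^*)$,
\[
\langle v, g\rangle = \langle (I - K(z)) f,\, R(\bar z) g\rangle = 0.
\]
Since $g \in C^\infty_c$ was arbitrary, $v = 0$ almost everywhere, and therefore $f = V v = 0$.

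The main technical obstacle is justifying the pairings in the scale-invariant functional framework. Specifically, I need $V R(\bar z) g \in \dot Y^*$ so that $R_0(\bar z) V R(\bar z) g \in \dot Y$, and the transposition $\langle f, K(z)^* h\rangle = \langle K(z) f, h\rangle$ with $h = R(\bar z) g$ must be legitimate. Both follow from the decay in the coefficient assumption \eqref{eq:assWZ} combined with $R(\bar z) g \in H^2$ and the Sobolev embedding $H^2 \hookrightarrow \dot Y$ for $n \ge 3$; the remaining identities are obtained by density, approximating $R(\bar z) g$ by smooth compactly supported functions and passing to the limit.
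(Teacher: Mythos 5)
Your proof is correct but takes a genuinely different route from the paper. The paper's argument is a cutoff estimate: setting $v=R_{0}(z)f$ and $v_{M}=v\,\chi(x/M)$, it computes $(H-z)v_{M}=f_{M}$, where $f_{M}$ is a commutator error supported in $M\le|x|\le 2M$, shows $\|f_{M}\|_{L^{2}}\to 0$ as $M\to\infty$ using the $\dot X,\dot Y$ bounds on $v$ and $\nabla v$, and then concludes from the $L^{2}$-boundedness of $(H-z)^{-1}$ that $v_{M}\to 0$ in $L^{2}$, hence $v=0$. Your approach is a duality argument: pair $v$ with $g\in C^{\infty}_{c}$, transpose $R_{0}$, substitute the Lippmann--Schwinger identity $R_{0}(\bar z)=(I-K(z)^{*})R(\bar z)$, and transpose $K(z)^{*}$ to land on $(I-K(z))f=0$. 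Both ultimately exploit the boundedness of the full resolvent at $z$ (or $\bar z$) on $L^{2}$; the cutoff argument is arguably more elementary and uniform in $z$, while your argument is cleaner algebraically but requires justifying the adjoint pairings within the scale-invariant framework -- which, as you correctly anticipate, is the main technical obstacle. Your embeddings $H^{2}\hookrightarrow\dot X\cap\dot Y$ for $n\ge 3$ are valid and suffice to place $R(\bar z)g$, $VR(\bar z)g$, and $R_{0}(\bar z)VR(\bar z)g$ in the spaces you need.

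One imprecision worth noting: the throwaway remark that the case $z=0$ works ``using $\dot Y^{*}\times\dot X$'' is not quite right as stated, since the $\dot Y^{*}$-$\dot X$ pairing is not a bounded bilinear form in general (the dyadic pieces scale as $2^{j/2}$ versus $2^{3j/2}$). What saves the argument is that the test function $g\in C^{\infty}_{c}$ produces $R_{0}(0)g$ with explicit $|x|^{2-n}$ decay, which does lie in $\dot Y$, so the $\dot Y^{*}\times\dot Y$ pairing is the one that applies for all $z\in\overline{\mathbb{C}^{\pm}}$; the point is the extra regularity of $g$, not a change of pairing. The paper's cutoff route sidesteps this delicacy entirely, which is probably why it was preferred.
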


\begin{proof}
  Let $v=R_{0}(z)f$,
  fix a compactly supported smooth function $\chi$ which is
  equal to 1 for $|x|\le1$, and for $M>1$ consider
  $v_{M}:=v(x)\chi(x/M)$. Then $v_{M}\in L^{2}$ and
  \begin{equation*}
    \textstyle
    (H-z)v_{M}=
    \frac1M \nabla\chi(\frac xM)(2\nabla v+2iA v)
    +\frac1{M^{2}} \Delta \chi(\frac xM)v
    =:f_{M}.
  \end{equation*}
  We have, for $\delta\in(1,\frac12)$,
  using the estimate $|A|\lesssim|x|^{-1}$,
  \begin{equation*}
  \begin{split}
    \|f_{M}\|_{L^{2}}
    \lesssim
    &
    M^{\delta-\frac12}
    \left(
      \||x|^{-\frac12-\delta}\nabla v\|_{L^{2}(|x|\ge M)}
      +
      \||x|^{-\frac32-\delta}v\|_{L^{2}(|x|\ge M)}
    \right)
    \\
    \lesssim
    &
    M^{\delta-\frac12}
    (\|\nabla v\|_{\dot Y}+\|v\|_{\dot X})
    \end{split}
  \end{equation*}
  uniformly in $M$, so that $f_{M}\to 0$ in $L^{2}$
  as $M\to \infty$.
  Since $v_{M}=R_{0}(z)f_{M}$ and $R_{0}(z)$ is a bounded
  operator on $L^{2}$, we conclude that $v=f=0$.
\end{proof}

The hard case is of course $z\in \sigma(L)$. Then we have the
following result, in which we write simply
\begin{equation*}
  R_{0}(\lambda)
  \quad\text{instead of}\quad 
  R_{0}(\lambda\pm i0)
\end{equation*}
since the computations for the two cases are identical.
Note that this is the only step where we use the 
additional $\delta$ decay of the coefficients.

\begin{lemma}[]\label{lem:eigv}
  Assume $W$ and $A$ satisfy for some $\delta>0$
  \begin{equation}\label{eq:WZdelta}
    |x|^{2}\bra{x}^{\delta}(W+i\partial \cdot A)
    \in \ell^{1}L^{\infty},
    \qquad
    |x|\bra{x}^{\delta}A
    \in \ell^{1}L^{\infty}
  \end{equation}
  and 
  $H=-\Delta -W-i \partial \cdot A-iA \cdot \partial $ 
  is a non negative selfadjoint operator on $L^{2}$.
  Let $f\in \dot Y^{*}$ be such that, for some
  $\lambda\ge0$,
  \begin{equation*}
    (I-K(\lambda))f=0,
    \qquad
    K(\lambda)
    :=(W+i \partial \cdot A+iA \cdot\partial)R_{0}(\lambda).
  \end{equation*}
  Then in the case $\lambda>0$ we have $f=0$,
  while in the case $\lambda=0$ we have
  $|x|^{n/2}f\in L^{2}$ and the function $v=R_{0}(0)f$
  belongs to $H^{2}_{loc}(\mathbb{R}^{n}\setminus0)\cap \dot X$
  with $\partial v\in \dot Y$,
  solves $Lv=0$ and satisfies 
  $|x|^{\frac n2-2-\delta'}v\in L^{2}$  and
  $|x|^{\frac n2-1-\delta'}\partial v\in L^{2}$ 
  for any $\delta'>0$.
\end{lemma}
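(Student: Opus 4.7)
The plan is to set $v:=R_{0}(\lambda)f$ and use the fixed-point relation $f=K(\lambda)f$ to derive a spectral equation for $v$. Rewriting $(I-K(\lambda))f=0$ as $f=(W+iA\cdot\partial+i\partial\cdot A)v$ and combining with $(-\Delta-\lambda)v=f$ gives $Hv=\lambda v$, i.e.\ $Lv=\lambda v$ in the notation of the paper. The mapping properties of $R_{0}(\lambda)$ recalled just before the lemma already provide $v\in\dot X$, $\partial v\in\dot Y$, and the local boundedness of $W,A$ away from the origin combined with classical elliptic regularity upgrades this to $v\in H^{2}_{loc}(\mathbb{R}^{n}\setminus 0)$.

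\textbf{Case $\lambda>0$.} The aim is to promote $v$ to $L^{2}$ and then invoke absence of embedded eigenvalues. Starting from $v\in\dot X$, $\partial v\in\dot Y$, the key observation is that the hypothesis \eqref{eq:WZdelta} provides an extra $\bra{x}^{\delta}$ gain over the bounds used in Lemma~\ref{lem:compact}, so that $\bra{x}^{\delta}f$ satisfies a weighted $\dot Y^{*}$-type bound rather than just $f\in\dot Y^{*}$. Feeding this back through $v=R_{0}(\lambda)f$ with a near-field/far-field split of the free resolvent and its weighted mapping properties converts the source gain into a genuine decay gain for $v$. Iterating this Agmon-type bootstrap a finite number of times lands $v\in L^{2}$. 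At that stage $v$ is an $L^{2}$ solution of $Lv=\lambda v$ with $\lambda>0$, and the Koch-Tataru absence-of-embedded-eigenvalues theorem \cite{KochTataru06-a} forces $v\equiv 0$, whence $f=(W+iA\cdot\partial+i\partial\cdot A)v=0$.

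\textbf{Case $\lambda=0$.} Here the equation $Lv=0$ and the local regularity are immediate from the construction above, so the content is the quantitative decay. I would run a parallel bootstrap using the explicit Riesz kernel $c_{n}|x-y|^{2-n}$ of $R_{0}(0)=(-\Delta)^{-1}$ and Stein-Weiss-type weighted $L^{2}$ estimates for the Riesz potential. The pointwise factorization
$|x|^{n/2}|Wv|\lesssim (|x|^{2}\bra{x}^{\delta}W)\,(|x|^{n/2-2}\bra{x}^{-\delta}|v|)$
and its analogues for the $A\cdot\partial$ and $(\partial\cdot A)$ contributions, followed by H\"older on dyadic shells, bounds $|x|^{n/2}f$ in $L^{2}$ in terms of weighted $L^{2}$ norms of $v$ and $\partial v$; the Riesz-potential estimates feed this decay of $f$ back into improved weighted decay of $v=R_{0}(0)f$ and $\partial v$. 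The $\bra{x}^{\delta}$ gain from \eqref{eq:WZdelta} is what breaks the borderline scaling and lets the iteration terminate at the sharp weights $|x|^{n/2-2-\delta'}v\in L^{2}$ and $|x|^{n/2-1-\delta'}\partial v\in L^{2}$ for every $\delta'>0$, together with $|x|^{n/2}f\in L^{2}$.

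\textbf{Main obstacle.} In both cases the heart of the matter is the decay bootstrap: the baseline free-resolvent estimate \eqref{eq:freeest} delivers precisely the borderline bounds $v\in\dot X$, $\partial v\in\dot Y$, which fall just short of what is needed, namely $L^{2}$ for $\lambda>0$ and the sharp weighted $L^{2}$ bounds matching Definition~\ref{def:reson} for $\lambda=0$. Extracting the small gain $\bra{x}^{\delta}$ from \eqref{eq:WZdelta} and transmuting it into weighted $L^{2}$ decay for $v$ through the mapping properties of $R_{0}(\lambda)$, in a way that is robust across the two spectral regimes, is the delicate step. Once this iteration is set up, the $\lambda>0$ case closes via Koch-Tataru, and the $\lambda=0$ case just records the output of the iteration as the quantitative resonance bounds.
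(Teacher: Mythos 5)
Your treatment of the case $\lambda=0$ is essentially the paper's own argument: Stein--Weiss bounds for the Riesz potential $R_0(0)$ between the homogeneous weighted spaces $\dot L^2_s$, the factorization of $W$, $\partial\cdot A$ and $A$ as multiplication operators gaining $|x|^{\delta}$, and a finite iteration of $f=K(0)f$ landing at $f\in\dot L^2_{n/2}$ and the stated weighted bounds on $v$ and $\partial v$. That part is sound.

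The case $\lambda>0$ contains a genuine gap. You propose to convert the extra $\bra{x}^{\delta}$ decay of the coefficients into decay of $v=R_0(\lambda)f$ purely through ``weighted mapping properties'' of the free resolvent, iterating until $v\in L^2$. At positive energy this mechanism fails: the outgoing kernel of $R_0(\lambda+i0)$ behaves like $e^{i\sqrt{\lambda}|x-y|}|x-y|^{-(n-1)/2}$, so even for compactly supported $f$ the function $R_0(\lambda+i0)f$ generically decays only like $|x|^{-(n-1)/2}$ and is \emph{not} in $L^2$; no amount of additional decay of the source improves this through mapping properties alone. To beat the generic Helmholtz decay one must exploit the equation and the radiation condition --- either via Agmon's lemma that $\widehat{f}$ vanishes on the sphere $|\xi|^2=\lambda$, or, as the paper does, via a Morawetz/radiation identity. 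Concretely, the paper applies the identity \eqref{eq:fundid} with $\psi'=\chi=|x|^{\delta}$, $\phi=-\chi'$, obtaining the radiation estimate \eqref{eq:radest} which shows that the Sommerfeld gradient $\partial_S v=\partial v-i\sqrt{\lambda}\,\widehat{x}\,v$ satisfies $\||x|^{(\delta-1)/2}\partial_S v\|_{L^2}<\infty$; a divergence identity then transfers this gain to $|\partial v|^2+\lambda|v|^2$ on spheres, yielding $|x|^{(\delta-1)/2}v\in L^2$. Note this is strictly weaker than $v\in L^2$ (the exponent $(\delta-1)/2$ is negative): the paper never proves $v\in L^2$, but instead concludes that $v$ is a resonant state and invokes Koch--Tataru's Theorem 8, which excludes embedded \emph{resonances}, after a nontrivial verification of their hypotheses A.1--A.3 on $W$ and $A$ (including the decay of the low-frequency part $S_{<R}A$), a step your proposal also omits. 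Without the radiation estimate (or an equivalent orthogonality argument) your bootstrap does not start, and without the resonance version of the absence-of-embedded-eigenvalues theorem the weaker decay you could actually obtain would not suffice.
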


\begin{proof}
  Defining as in the previous proof $v=R_{0}(\lambda)f$,
  we see that $v$ solves
  \begin{equation}\label{eq:eqvg}
    \Delta v+\lambda v+g=0,
    \qquad
    g:=Wv+i A \cdot \partial v +i \partial \cdot Av.
  \end{equation}
  Then given a radial function $\chi\ge0$ to be precised later, we
  apply again identity \eqref{eq:fundid}
  with the choices
  \begin{equation*}
    \psi'=\chi,
    \qquad
    \phi=-\chi'
  \end{equation*}
  so that in particular $\Delta \psi+\phi=\frac{n-1}{|x|}\chi$.
  We integrate the identity on $\mathbb{R}^{n}$ and,
  after straightforward computations
  (see Proposition 3.1 of 
  \cite{CacciafestaDAnconaLuca16-b}
  for a similar argument), we arrive at the
  following \emph{radiation estimate}:
  \begin{equation}\label{eq:radest}
    \textstyle
    \int \chi'|\partial_{S}v|^{2}
    +
    2(\frac{\chi}{|x|}-\chi')|(\partial v)_{T}|^{2}
    -
    \frac{n-1}2\int \Delta(\frac{\chi}{|x|})|v|^{2}
    =
    \Re \int\chi g(\frac{n-1}{|x|}\overline{v}+
      2 \widehat{x}\cdot\overline{\partial_{S}v})
  \end{equation}
  where we denoted the "Sommerfeld" gradient of $v$ with
  \begin{equation*}
    \partial_{S}v:=\partial v-i \sqrt{\lambda}\widehat{x}v,
    \qquad
    \widehat{x}=x/|x|
  \end{equation*}
  and the tangential component of $\partial v$ with
  \begin{equation*}
    |(\partial v)_{T}|^{2}:=
    |\partial v|^{2}-|\widehat{x}\cdot \partial v|^{2}.
  \end{equation*}

  We now estimate the right hand side of \eqref{eq:radest}.
  We have
  \begin{equation*}
    \textstyle
    |\Re\int\chi g \frac{\overline{v}}{|x|}|
    \le
    \|\chi(W+i (\partial \cdot A))|x|^{-1}|v|^{2}\|_{L^{1}}
    +
    2\|\chi A|\partial v| |x|^{-1}v\|_{L^{1}}
  \end{equation*}
  \begin{equation*}
    \le
    \|\chi|x|(W+i(\partial \cdot A))\|
      _{\ell^{1}L^{1}L^{2}}
    \|v\|_{\dot X}^{2}
    +
    2\|\chi|x|^{1/2}A\|_{\ell^{1}L^{2}L^{\infty}}
    \|v\|_{\dot X}\|\nabla v\|_{\dot Y}
  \end{equation*}
  and similarly
  \begin{equation*}
    \textstyle
    |\Re\int \chi g \widehat{x}\cdot\overline{\partial_{S}v}|
    \le
    \|\chi(W+i(\partial \cdot A)) v|\partial_{S}v|\|_{L^{1}}
    +
    \|\chi A \cdot \partial v |\partial_{S}v|\|_{L^{1}}
  \end{equation*}
  \begin{equation*}
    \le
    \|\chi|x|^{3/2}(W+i(\partial \cdot A))\|
      _{\ell^{1}L^{2}L^{\infty}}
    \|v\|_{\dot X}\|\partial_{S}v\|_{\dot Y}
    +
    \|\chi|x|A\|_{\ell^{1}L^{\infty}}
    \|\partial v\|_{\dot Y}\|\partial_{S}v\|_{\dot Y}.
  \end{equation*}
  Since the quantities $\|v\|_{\dot X}$, $\|\partial v\|_{\dot Y}$
  and 
  $\|\partial_{S}v\|_{\dot Y}\le\|\partial v\|_{\dot Y}+
  \sqrt{\lambda}\|v\|_{\dot Y}$ are all estimated by
  $\|f\|_{\dot Y^{*}}$ (recall \eqref{eq:freeest}),
  we conclude
  \begin{equation}\label{eq:estgsmall}
    \textstyle
    \left|\Re \int\chi g(\frac{n-1}{|x|}\overline{v}+
      2 \widehat{x}\cdot\overline{\partial_{S}v})\right|
    \lesssim
    N_{\chi}^{2}\|f\|_{\dot Y^{*}}^{2}
  \end{equation}
  where
  \begin{equation*}
    N_{\chi}^{2}:=
    \|\chi|x|^{3/2}(W+i(\partial \cdot A))\|
      _{\ell^{1}L^{2}L^{\infty}}
    +
    \|\chi|x|A\|_{\ell^{1}L^{\infty}}.
  \end{equation*}
  Finally, if we choose 
  \begin{equation*}
    \chi(x)=|x|^{\delta}
    \quad\text{with}\quad 
    0<\delta\le 1
  \end{equation*}
  by \eqref{eq:radest} and \eqref{eq:estgsmall} we obtain,
  dropping a (nonnegative) term at the left,
  \begin{equation}\label{eq:finaldelta}
    \||x|^{(\delta-1)/2}\partial_{S}v\|_{L^{2}}
    +
    \||x|^{(\delta-3)/2}v\|_{L^{2}}
    \lesssim_{\delta}
    N_{\delta}\|f\|_{\dot Y^{*}}
  \end{equation}
  where by assumption
  \begin{equation*}
    N_{\delta}^{2}
    :=
    \||x|^{3/2+\delta}(W+i(\partial \cdot A))\|
      _{\ell^{1}L^{2}L^{\infty}}
    +
    \||x|^{1+\delta}A\|_{\ell^{1}L^{\infty}}<\infty.
  \end{equation*}

  Consider now the following identity, obtained using the
  divergence formula:
  \begin{equation*}
    \textstyle
    \int_{|x|= R}
    (|\partial v|^{2}+\lambda|v|^{2}-|\partial_{S}v|^{2})
    d \sigma
    =
    2\Re\int_{|x|\le R} i\sqrt{\lambda}\,
    \partial \cdot ( v\,\overline{\partial v})
    =
    2\Re\int_{|x|\le R} i\sqrt{\lambda}\,
    (v\,\overline{\Delta v})
  \end{equation*}
  for arbitrary $R>0$. Substituting
  $\Delta v=-\lambda v-g$ from \eqref{eq:eqvg}
  and dropping two pure imaginary terms, we get
  \begin{equation*}
    \textstyle
    \int_{|x|= R}
    (|\partial v|^{2}+\lambda|v|^{2}-|\partial_{S}v|^{2})
    d \sigma
    =
    2\Re
    \int_{|x|\le R}
    (A \cdot \partial v +\partial \cdot Av)\overline{v}.
  \end{equation*}
  The last term can be written,
  again by the divergence formula,
  \begin{equation*}
    \textstyle
    =2\int_{|x|\le R}\partial \cdot(A|v|^{2})
    =2\sum_{j}\int_{|x|=R}
    \widehat{x}_{j}
    \cdot A|v|^{2}
    d \sigma,
    \qquad
    \widehat{x}_{j}=x_{j}/|x|.
  \end{equation*}
  By assumption $|A|\lesssim|x|^{-1}$, hence for some $R_{0}>0$
  we have $\lambda>2|A(x)|$ for all $|x|>R_{0}$, and the term in 
  $A$ can be absorbed at the left of the identity. Summing up,
  we have proved that
  \begin{equation}\label{eq:equivna}
    \textstyle
    \int_{|x|=R}(|\partial v|^{2}+\lambda|v|^{2})d \sigma
    \le
    2\int_{|x|=R}|\partial_{S} v|^{2}d \sigma,
    \qquad
    R\ge R_{0}.
  \end{equation}
  Multiplying both sides by $|x|^{\delta-1}$,
  integrating in the radial direction from $R_{0}$ to $\infty$,
  and using \eqref{eq:finaldelta}, we conclude
  \begin{equation}\label{eq:deltaL2}
    \||x|^{(\delta-1)/2}\partial v\|_{L^{2}(|x|\ge R_{0})}
    +
    \sqrt{\lambda}
    \||x|^{(\delta-1)/2} v\|_{L^{2}(|x|\ge R_{0})}
    \lesssim
    \|f\|_{\dot Y^{*}}.
  \end{equation}

  In the case $\lambda>0$ we have proved that
  $|x|^{(\delta-1)/2}v\in L^{2}$ i.e., $\lambda$ is a resonance,
  and this is enough to conclude that $v=0$ by applying
  one of the available results on the absence of embedded
  eigenvalues. We shall apply
  the results from \cite{KochTataru06-a} which are 
  partiularly sharp.
  We need to check the assumptions on the potentials required
  in \cite{KochTataru06-a}.
  The potential $V$ in \cite{KochTataru06-a} is simply $V=z$ 
  in our case, which we are assuming real and $>0$, 
  thus condition A.1 is trivially satisfied.
  Concerning $W$ we have
  \begin{equation*}
    \|W\|_{L^{n/2}}
    \le
    \||x|^{-2}\|_{\ell^{\infty}L^{n/2}}
    \||x|^{2}W\|_{\ell^{n/2}L^{\infty}}<\infty
  \end{equation*}
  by assumption,
  thus $W\in L^{n/2}$ and condition A.2 in
  \cite{KochTataru06-a} is satisfied
  Concerning the potential $Z$ in the notations
  of \cite{KochTataru06-a}, which coincides with
  $A$ here, we have
  \begin{equation*}
    \|A\|_{\ell^{\infty}L^{n}}
    \le
    \||x|^{-1}\|_{\ell^{\infty}L^{n}}
    \||x|A\|_{\ell^{n} L^{\infty}}<\infty
  \end{equation*}
  thus $A\in \ell^{\infty}L^{n}$; moreover a similar computation
  applied to $\one{|x|>M}A$ gives
  \begin{equation*}
    \|\one{|x|>M}A\|_{\ell^{\infty}L^{n}}
    \le
    \||x|^{-1}\|_{\ell^{\infty}L^{n}}
    \|\one{|x|>M}|x|^{-\delta}\|_{L^{\infty}}
    \||x|^{1+\delta}A\|_{\ell^{n} L^{\infty}}\to0
    \quad\text{as}\quad M\to \infty.
  \end{equation*}
  Thus to check that $A$ satisfies condition A.3
  in \cite{KochTataru06-a} it remains to check that
  the low frequency part $S_{<R}A$ of $A$ satisfies A.2
  for $R$ large enough. $S_{<R}A$ is obviously smooth.
  Moreover, it is clear that
  $|x|A\to0$ as $|x|\to \infty$;
  in order to prove the same
  decay property for $S_{<R}A$ we represent it as
  a convolution with a suitable Schwartz kernel $\phi$
  \begin{equation*}
    \textstyle
    \phi *A(x)=
    \int_{|y|\le \frac{|x|}{2}}
    A(y)\phi(x-y)
    +
    \int_{|y|\ge \frac{|x|}{2}}
    A(y)\phi(x-y).
  \end{equation*}
  The first integral is bounded by $C_{k} \bra{x}^{-k}$ for all $k$.
  For the second one we write
  \begin{equation*}
    \textstyle
    |x|\int_{|y|\ge \frac{|x|}{2}}
    A(y)\phi(x-y)
    \le
    \int_{|y|\ge \frac{|x|}{2}}
    |y|A(y)\phi(x-y)=o(|x|).
  \end{equation*}
  We have thus proved that $|x|S_{<R}A\to0$ as $|x|\to \infty$
  (for any fixed $R$) and hence $A=Z$ satisfies condition A.3.
  Applying Theorem 8 of \cite{KochTataru06-a}, we conclude
  that $v=0$.

  It remains to consider the case $\lambda=0$.
  We denote by $\dot L^{2}_{s}$ the Hilbert space with norm
  \begin{equation*}
    \|v\|_{\dot L^{2}_{s}}:=\||x|^{s}v\|_{L^{2}}.
  \end{equation*}
  By the well known Stein--Weiss estimate for fractional
  integrals in weighted $L^{p}$ spaces, applied to
  $R_{0}(0)v=\Delta^{-1}v=c|x|^{2-n}*v$, we see that $R_{0}(0)$
  is a bounded operator
  \begin{equation*}
    \textstyle
    R_{0}(0):\dot L^{2}_{s}\to \dot L^{2}_{s-2}
    \qquad\text{for all}\qquad
    2-\frac n2<s<\frac n2
  \end{equation*}
  while $\partial R_{0}(0)=c (x|x|^{-n})*v$ is a bounded operator
  \begin{equation*}
    \textstyle
    \partial R_{0}(0):\dot L^{2}_{s}\to \dot L^{2}_{s-1}
    \qquad\text{for all}\qquad
    1-\frac n2<s<\frac n2.
  \end{equation*}
  Recall also that $R_{0}(0)$ is bounded from $\dot Y^{*}$
  to $\dot X$ and $\partial R_{0}(0)$ is bounded from
  $\dot Y^{*}$ to $\dot Y$.
  Moreover from the assumption on $W,A$ it follows that
  the corresponding multiplication operators are bounded
  operators
  \begin{equation*}
    \textstyle
    W+i (\partial \cdot A):\dot X\to \dot L^{2}_{1/2+\delta},
    \qquad
    W+i (\partial \cdot A):\dot L^{2}_{s-2}\to \dot L^{2}_{s+\delta}
    \qquad \forall s\in \mathbb{R},
  \end{equation*}
  \begin{equation*}
    A:\dot Y\to \dot L^{2}_{1/2+\delta},
    \qquad
    A:\dot L^{2}_{s-1}\to \dot L^{2}_{s+\delta}
    \qquad \forall s\in \mathbb{R}.
  \end{equation*}
  Conbining all the previous properties we deduce that
  $K(0)=(W+i \partial \cdot A+i A \cdot \partial)R_{0}(0)$
  is a bounded operator
  \begin{equation}\label{eq:Kbdd}
    \textstyle
    K(0):\dot Y^{*}\to \dot L^{2}_{1/2+\delta}
    \quad\text{and}\quad 
    K(0):\dot L^{2}_{s}\to \dot L^{2}_{s+\delta},
    \qquad
    \forall\ 2-\frac n2<s<\frac n2.
  \end{equation}
  Since we know that $f\in \dot Y^{*}$ and that
  $f=K(0)f$, applying \eqref{eq:Kbdd} repeatedly,
  we obtain in a finite number of steps that 
  $f\in \dot L^{2}_{n/2}$, which in turn implies
  $v=R_{0}(0)f\in \dot L^{2}_{s}$ for all $s<\frac n2-2$
  and
  $\partial v=\partial R_{0}(0)f\in \dot L^{2}_{s}$ for all 
  $s<\frac n2-1$. The proof is concluded.
\end{proof}

If $K(z)$ is compact and $I-K(z)$ is injective on $\dot Y^{*}$
(under suitable assumptions),
it follows from Fredholm theory that $(I-K(z))^{-1}$ is a
bounded operator for all $z\in \mathbb{C}$. However we need
a bound uniform in $z$, and to this end it is sufficient
to prove that the map $z \mapsto(I-K(z))^{-1}$
is continuous. 
This follows from a general well known
result which we reprove here for the benefit of the reader.
Note that $z \mapsto I-K(z)$ is trivially 
continuous (and holomorphic for $z\not\in \sigma(H)$).

\begin{lemma}[]\label{lem:invertImK}
  Let $X_{1},X_{2}$ be two Banach spaces,
  $K_{j},K$ compact operators from $X_{1}$ to $X_{2}$,
  and assume the sequence
  $K_{j}\to K$ in the operator norm as $j\to \infty$.
  If $I-K_{j}$, $I-K$ are invertible with bounded inverses,
  then $(I-K_{j})^{-1}\to(I-K)^{-1}$ in the operator norm.
\end{lemma}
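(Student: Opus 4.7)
The plan is to combine the second resolvent identity with a standard contradiction argument based on compactness. I would start from the algebraic identity
\[
(I-K)^{-1}-(I-K_{j})^{-1}=(I-K)^{-1}(K-K_{j})(I-K_{j})^{-1},
\]
which gives
\[
\|(I-K)^{-1}-(I-K_{j})^{-1}\|\le\|(I-K)^{-1}\|\,\|K-K_{j}\|\,\|(I-K_{j})^{-1}\|.
\]
Since $\|K-K_{j}\|\to0$ by hypothesis, the conclusion follows as soon as we know that the sequence $\|(I-K_{j})^{-1}\|$ is bounded uniformly in $j$. Thus the whole proof reduces to this uniform bound.

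To prove the uniform bound I would argue by contradiction. If it failed, passing to a subsequence one could find $f_{j}$ with $\|f_{j}\|=1$ and $\|(I-K_{j})^{-1}f_{j}\|\to\infty$, and setting $g_{j}:=(I-K_{j})^{-1}f_{j}/\|(I-K_{j})^{-1}f_{j}\|$ one would obtain unit vectors $g_{j}$ with $(I-K_{j})g_{j}\to 0$ in norm. Using $\|K_{j}-K\|\to0$ together with $\|g_{j}\|=1$ one then gets $K_{j}g_{j}-Kg_{j}\to 0$, so $(I-K)g_{j}\to 0$.

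At this stage the compactness of $K$ is decisive: since $(g_{j})$ is bounded, a further subsequence $Kg_{j_{k}}$ converges in norm to some $h$; combined with $g_{j}-Kg_{j}\to 0$ this forces $g_{j_{k}}\to h$, and therefore $\|h\|=1$ and $Kh=h$. The invertibility of $I-K$ implies its injectivity, whence $h=0$, contradicting $\|h\|=1$. This gives the uniform bound, and the lemma then follows from the displayed estimate.

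The argument contains no genuine technical obstacle; the only nontrivial point is the uniform bound $\sup_{j}\|(I-K_{j})^{-1}\|<\infty$, which is handled by the contradiction step above, and the key input there is precisely the norm-compactness of $K$, without which one could not pass from the bounded sequence $(g_{j})$ to a convergent subsequence.
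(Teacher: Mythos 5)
Your proof is correct, and it takes a slightly different — and arguably cleaner — route than the paper's. The paper first proves, for each fixed $\phi\in X_2$, that the sequence $(I-K_j)^{-1}\phi$ is bounded (via the same compactness-plus-contradiction step you use, applied to the normalized vectors), then shows pointwise convergence $(I-K_j)^{-1}\phi\to(I-K)^{-1}\phi$, and finally invokes the uniform boundedness principle. You instead assume directly that $\sup_j\|(I-K_j)^{-1}\|=\infty$, extract unit vectors $f_j$ nearly attaining the norms, and derive the same contradiction; this yields the uniform bound $\sup_j\|(I-K_j)^{-1}\|<\infty$ without any appeal to the UBP. You then pass to norm convergence explicitly via the second resolvent identity
\[
(I-K)^{-1}-(I-K_j)^{-1}=(I-K)^{-1}(K-K_j)(I-K_j)^{-1},
\]
a step the paper leaves implicit (and which is genuinely needed: pointwise convergence plus uniform boundedness alone only give strong operator convergence, not norm convergence). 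In both arguments the decisive ingredient is identical — the norm-compactness of $K$, used to extract a convergent subsequence of $Kg_j$ (resp.\ $K\psi_j$) and thus produce a unit fixed point of $K$, contradicting the invertibility of $I-K$. Your version is self-contained, avoids the Baire-category machinery behind the UBP, and spells out the final reduction; the paper's version decomposes the uniform bound into a pointwise bound plus an abstract principle, which some readers may find more familiar but which costs an extra tool and leaves the last inference unstated.
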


\begin{proof}
  Let $\phi\in X_{2}$ and let
  $c_{j}:=\|(I-K_{j})^{-1}\phi\|_{X_{1}}$.
  If by contradiction $c_{j}\to \infty$, then
  defining $\psi_{j}=(I-K_{j})^{-1}\phi \cdot c_{j}^{-1}$
  and $\phi_{j}=\phi \cdot c_{j}^{-1}$ we would have
  \begin{equation*}
    \|\psi_{j}\|_{X_{1}}=1,
    \qquad
    \|\phi_{j}\|_{X_{2}}\to 0,
    \qquad
    \phi_{j}=(I-K_{j})\psi_{j}.
  \end{equation*}
  The last identity can be written
  \begin{equation*}
    \psi_{j}=\phi_{j}+(K_{j}-K)\psi_{j}+K \psi_{j}.
  \end{equation*}
  The first two terms at the right tend to 0, and the third
  one converges, by possibly passing to a subsequence, since
  $K$ is compact; let $\psi=\lim K \psi_{j}$. By the previous
  identity we see that also $\psi_{j}$ converges to $\psi$
  so that $\|\psi\|=1$ and $\psi=K \psi$, which contradicts 
  the invertibility of $I-K$. 

  We have thus proved that, for any $\phi\in X_{2}$, the sequence
  $\chi_{j}:=(I-K_{j})^{-1}\phi$ is bounded in $X_{1}$.
  Write this identity in the form
  \begin{equation*}
    \chi_{j}=\phi+K \chi_{j}+(K_{j}-K)\chi_{j}
  \end{equation*}
  and note as before that $K \chi_{j}$ is a relatively compact
  sequence; let $\chi$ be any one of its limit points.
  Letting $j\to \infty$ we get $\chi=\phi+K \chi$, i.e.,
  $(I-K_{j})^{-1}\phi\to(I-K)^{-1}\phi$.
  Applying the uniform boundedness principle we get the claim.
\end{proof}

We finally sum up the previous results. 
We shall need to assume that
$0$ \emph{is not a resonance}, in the sense
of Definition \ref{def:reson}.
Note that in 
Lemma \ref{lem:eigv} we proved in particular that if 
$f\in \dot Y^{*}$ satisfies $f=K(0)f$, then $v=R_{0}(0)f$
is a resonant state at 0.

\begin{theorem}[]\label{the:invert}
  Assume the operator $H$ defined in \eqref{eq:defL}
  is non negative and selfadjoint on $L^{2}$, with
  $W$ and $A$ satisfying \eqref{eq:WZdelta}
  for some $\delta>0$. In addition, asssume that
  $0$ is not a resonance for $H$, in the sense of
  Definition \ref{def:reson}.

  Then $I-K(z)$ is a bounded invertible operator 
  on $\dot Y^{*}$, with $(I-K(z))^{-1}$
  bounded uniformly for $z$ in bounded subsets of 
  $\overline{\mathbb{C}^{\pm}}$.
  Moreover, the resolvent operator $R(z)=(H-z)^{-1}$ 
  satisfies the estimate
  \begin{equation}\label{eq:resestim}
    \|R(z)f\|_{\dot X}
    +
    |z|^{\frac12}\|R(z)f\|_{\dot Y}
    +
    \|\partial R(z)f\|_{\dot Y}
    \le
    C(z)
    \|f\|_{\dot Y^{*}}
  \end{equation}
  for all $z\in\overline{\mathbb{C}^{\pm}}$, where $C(z)$
  is a continuous function of $z$.
\end{theorem}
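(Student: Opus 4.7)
The plan is to run a Fredholm-type argument for $I-K(z)$ on $\dot Y^{*}$: compactness is already given by Lemma \ref{lem:compact}, the injectivity lemmas \ref{lem:eigvcomplex} and \ref{lem:eigv} take care of the nullspace, and Lemma \ref{lem:invertImK} will turn pointwise invertibility into a bound uniform on bounded subsets of $\overline{\mathbb{C}^{\pm}}$. Once $(I-K(z))^{-1}$ is controlled, the resolvent estimate \eqref{eq:resestim} drops out of the Lippmann--Schwinger identity \eqref{eq:LSE} combined with the free estimate \eqref{eq:freeest}.

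The first concrete step is to verify that $I-K(z)$ is injective on $\dot Y^{*}$ for every $z\in\overline{\mathbb{C}^{\pm}}$, which I would split into three cases. If $z\notin\sigma(H)$, and in particular whenever $\Im z\neq 0$, Lemma \ref{lem:eigvcomplex} gives the conclusion immediately. If $z=\lambda>0$, Lemma \ref{lem:eigv} yields $v:=R_{0}(\lambda)f\equiv 0$, and then $f=K(\lambda)f=(W+iA\cdot\partial+i\partial\cdot A)v=0$. If $z=0$, Lemma \ref{lem:eigv} instead produces $v=R_{0}(0)f$ lying in $H^{2}_{loc}(\mathbb{R}^{n}\setminus 0)\cap\dot X$ with $\partial v\in\dot Y$, solving $Hv=0$ and satisfying exactly the weighted $L^{2}$-bounds listed in Definition \ref{def:reson}; that is, $v$ would be a resonant state at $0$ for $H$. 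The non-resonance hypothesis rules this out, so $v=0$ and hence $f=-\Delta v=0$.

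With injectivity in hand and $K(z)$ compact, the Fredholm alternative produces a bounded inverse $(I-K(z))^{-1}$ for each $z\in\overline{\mathbb{C}^{\pm}}$. To upgrade this to uniformity on bounded sets, I would use the norm-continuity of $z\mapsto K(z)$ from Lemma \ref{lem:compact} and apply Lemma \ref{lem:invertImK} along any sequence $z_{n}\to z$; this shows $z\mapsto(I-K(z))^{-1}$ is continuous in operator norm on $\overline{\mathbb{C}^{\pm}}$, so its operator norm is bounded by a continuous function of $z$, and hence uniformly bounded on bounded subsets. Writing $C_{0}(z):=\|(I-K(z))^{-1}\|_{\dot Y^{*}\to\dot Y^{*}}$, combining $R(z)=R_{0}(z)(I-K(z))^{-1}$ with \eqref{eq:freeest} then delivers \eqref{eq:resestim} with a continuous $C(z)\lesssim C_{0}(z)$.

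The genuine difficulty is concentrated in the injectivity step on the positive real axis. At $z=\lambda>0$ this is the exclusion of embedded eigenvalues, the single place where the extra $\delta>0$ decay in \eqref{eq:WZdelta} is actually used; that bookkeeping is already packaged inside Lemma \ref{lem:eigv} via the Koch--Tataru criterion, so here I merely invoke it. At $z=0$ the subtlety is conceptual rather than technical: one must recognize that the output of Lemma \ref{lem:eigv} matches Definition \ref{def:reson} verbatim, so that the non-resonance hypothesis applies without any further computation. Everything else is standard Fredholm machinery, and I would keep the write-up short by pointing at the preparatory lemmas rather than re-deriving their content.
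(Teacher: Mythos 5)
Your proposal is correct and matches the paper's own proof, which likewise combines Lemmas \ref{lem:compact}, \ref{lem:eigvcomplex}, \ref{lem:eigv}, and \ref{lem:invertImK} with the Fredholm alternative and then derives \eqref{eq:resestim} from \eqref{eq:LSE} and \eqref{eq:freeest}. You have simply spelled out the case split on $z$ (complex, positive real, zero) and the role of the non-resonance hypothesis more explicitly than the paper's one-paragraph argument, but the logical structure and the lemmas invoked are identical.
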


\begin{proof}
  It is sufficient to combine
  Lemmas \ref{lem:compact},
  \ref{lem:eigvcomplex},
  \ref{lem:eigv},
  \ref{lem:invertImK}
  and apply Fredholm theory in conjuction with assumption
  \eqref{eq:resona}, to prove the claims about
  $I-K(z)$; note that \eqref{eq:WZdelta} include
  the assumptions of Lemmas \ref{lem:compact}--\ref{lem:invertImK}.
  Finally, using the representation \eqref{eq:LSE}
  and the free estimate \eqref{eq:freeest} we 
  obtain \eqref{eq:resestim}.
\end{proof}

\section{The full resolvent estimate}\label{sec:the_full_reso_esti}

In this Section and the following ones we shall freely 
use a few
results from classical harmonic analysis, in particular
the basic properties of Muckenhoupt classes $A_{p}$
and Lorentz spaces. For more details
see e.g. \cite{Grafakos08-a}, \cite{JohnsonNeugebauer91-a} and
\cite{Stein93-a}.

Consider the operator $L$ defined by
\begin{equation}\label{eq:opL}
  Lv=-\Delta_{A}v+Vv
\end{equation}
and the resolvent equation
\begin{equation}\label{eq:reseqL}
  Lv-zv=f.
\end{equation}
We put together the estimates of the previous Sections
to obtain:

\begin{theorem}[Resolvent estimate]\label{the:fullresest}
  Let $n\ge3$.
  Assume the operator $L$ defined in \eqref{eq:opL} is selfadjoint
  and non negative on $L^{2}(\mathbb{R}^{n})$, with domain
  $H^{2}(\mathbb{R}^{n})$.
  Assume $V:\mathbb{R}^{n}\to \mathbb{R}$
  and $A:\mathbb{R}^{n}\to \mathbb{R}^{n}$ satisfy
  for some $\delta>0$:
  \begin{equation}\label{eq:assVA}
    |x|^{2}\bra{x}^{\delta}(V-i \partial \cdot A),
    \quad
    |x|\bra{x}^{\delta}A
    \quad\text{and}\quad 
    |x|\bra{x}^{\delta}\widehat{B}
    \quad\text{belong to}\quad 
    \ell^{1}L^{\infty}.
  \end{equation}
  Moreover, assume 0 is not a resonance for $L$,
  in the sense of Definition \ref{def:reson}.

  Then for all $z\in \overline{\mathbb{C}^{\pm}}$ with
  $|\Im z|\le1$ the resolvent operator $R(z)=(L-z)^{-1}$
  satisfies the following estimate uniform in $z$:
  \begin{equation}\label{eq:resestim2}
    \|R(z)f\|_{\dot X}
    +
    |z|^{\frac12}\|R(z)f\|_{\dot Y}
    +
    \|\partial R(z)f\|_{\dot Y}
    \lesssim
    \|f\|_{\dot Y^{*}}.
  \end{equation}
\end{theorem}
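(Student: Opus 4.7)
The plan is to combine the large-frequency estimate of Theorem \ref{the:resestlarge} with the bounded-frequency estimate of Theorem \ref{the:invert}: a single threshold $\lambda_0$ will split the strip $\{|\Im z|\le1\}$ into the region $\{|\Re z|\ge\lambda_0\}$, handled by the former, and the compact region $\{|\Re z|\le\lambda_0\}$, handled by the latter.

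First I would recast $(L-z)v=f$ in both required forms. Using $\Delta_A=\Delta+2iA\cdot\partial+i(\partial\cdot A)-|A|^2$, the equation takes the form \eqref{eq:reseq} with $W=-V$ and $Z=0$ (the right-hand side being $-f$), and simultaneously the form $(H-z)v=f$ for the operator $H$ of \eqref{eq:defL} with $W=-V-|A|^2$. The hypotheses \eqref{eq:WZdelta} of Theorem \ref{the:invert} are then inherited from \eqref{eq:assVA}; the only nontrivial check is that $|x|^2\bra{x}^\delta|A|^2\in\ell^1L^\infty$, which follows by setting $a_j:=2^j\bra{2^j}^\delta\|A\|_{L^\infty(|x|\sim 2^j)}\in\ell^1\subset\ell^2$ and observing
\begin{equation*}
 2^{2j}\bra{2^j}^\delta \|A\|^2_{L^\infty(|x|\sim 2^j)} = a_j^2\bra{2^j}^{-\delta}\le a_j^2.
\end{equation*}

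To apply Theorem \ref{the:resestlarge} I would split $V=V_L+V_S$ with $V_L=V\one{|x|\le R}$, take $W_L=-V_L$, $W_S=-V_S$, $Z=0$, and choose $R$ so large that $\||x|^{3/2}V_S\|_{\ell^1L^2L^\infty}\le\sigma_0$; the $\bra{x}^\delta$ factor in \eqref{eq:assVA} ensures this tail norm tends to $0$ as $R\to\infty$. The associated threshold
\begin{equation*}
 \lambda_0:=\sigma_0^{-1}\bigl[\||x|\widehat{B}\|_{\ell^1L^\infty}^2+\||x|V_L\|_{\ell^1L^\infty}\bigr]+2
\end{equation*}
is then a finite constant, and Theorem \ref{the:resestlarge} gives \eqref{eq:estlargela} on $\{|\Re z|\ge\lambda_0,\ |\Im z|\le1\}$. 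Since $A\in\ell^\infty L^n$ (using $\|A\|_{L^n(|x|\sim 2^j)}\lesssim 2^j\|A\|_{L^\infty(|x|\sim 2^j)}$, bounded thanks to the $\ell^1$ assumption on $|x|\bra{x}^\delta A$), Lemma \ref{lem:estdeAde} upgrades the $\|\partial^A v\|_{\dot Y}$ term to $\|\partial v\|_{\dot Y}$ modulo $\|v\|_{\dot X}$, which is already controlled on the left.

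On the complementary region $\{|\Re z|\le\lambda_0,\ |\Im z|\le1\}$, Theorem \ref{the:invert} yields \eqref{eq:resestim} with a constant $C(z)$ continuous in $z$; compactness of this region in $\overline{\mathbb{C}^\pm}$ gives $\sup_{z} C(z)<\infty$. Gluing the two bounds produces \eqref{eq:resestim2} uniformly over the full strip. The main obstacle I anticipate is the coefficient bookkeeping: showing that the $|A|^2$ term generated by expanding $\Delta_A$ sits in the $\ell^1L^\infty$ class of Theorem \ref{the:invert}, and checking $A\in\ell^\infty L^n$ for Lemma \ref{lem:estdeAde}. Once these reductions are in place, the patching step itself is routine.
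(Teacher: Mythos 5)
Your overall strategy matches the paper's: combine Theorem \ref{the:resestlarge} (large frequency) with Theorem \ref{the:invert} (bounded frequency), verify the hypotheses of each for the given $L$, and patch the two regimes along a finite threshold. The routine verifications you carry out — that $|x|^2\bra{x}^\delta|A|^2\in\ell^1L^\infty$, and that $A\in\ell^\infty L^n$ so Lemma \ref{lem:estdeAde} upgrades $\partial^A$ to $\partial$ — are correct and are the same checks the paper performs or takes for granted.

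However, your decomposition $V=V_L+V_S$ with $V_L=V\one{|x|\le R}$, $V_S=V\one{|x|>R}$ is backwards, and it fails for potentials that are singular at the origin, which the theorem explicitly allows. To invoke Theorem \ref{the:resestlarge} you need \emph{both} $\||x|^{3/2}W_S\|_{\ell^1L^2L^\infty}\le\sigma_0$ \emph{and} $\||x|W_L\|_{\ell^1L^\infty}<\infty$. Your $V_S$ (the tail) does satisfy the first, but with $V_L=V\one{|x|\le R}$ the second norm is
$\sum_{2^j\lesssim R}2^j\|V\|_{L^\infty(|x|\sim 2^j)}=\sum_{2^j\lesssim R}2^{-j}\cdot 2^{2j}\|V\|_{L^\infty(|x|\sim 2^j)}$,
and the factor $2^{-j}\to\infty$ as $j\to-\infty$ overwhelms the merely summable sequence $2^{2j}\|V\|_{L^\infty(|x|\sim 2^j)}$. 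Concretely, $V(x)=c\,|x|^{-2}\bra{\log|x|}^{-2}$ near the origin satisfies \eqref{eq:assVA}, yet $\||x|V\one{|x|\le R}\|_{\ell^1L^\infty}=\infty$, so your $\lambda_0$ is not finite and the argument breaks. The paper makes the opposite split: $W_S=-V\one{|x|\le r}$ for \emph{small} $r$, using $\||x|^{3/2}V\one{|x|\le r}\|_{\ell^1L^2L^\infty}\le\||x|^2V\one{|x|\le r}\|_{\ell^1L^\infty}\to 0$ as $r\to0$ (tails of a convergent series near the origin), and then $W_L=-V\one{|x|>r}$ contains no singularity, so $\||x|W_L\|_{\ell^1L^\infty}\le r^{-1}\||x|^2V\|_{\ell^1L^\infty}<\infty$ is automatic. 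The smallness of $W_S$ thus comes from $\ell^1$-summation near the origin, not from the extra $\bra{x}^{-\delta}$ decay at infinity (that decay plays no role here and is needed only later, in Lemma \ref{lem:eigv}). Replacing your split by the paper's, the rest of your argument goes through.
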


\begin{proof}
  The proof is obtained by combining the estimates of 
  Theorems \ref{the:resestlarge} and \ref{the:invert}.
  In order to apply Theorem \ref{the:invert}, we write $L$
  in the form
  \begin{equation*}
    Lv=-\Delta v +(V+|A|^{2})
      -iA \cdot \partial v-i \partial \cdot(Av)
  \end{equation*}
  which coincides with $H$ defined in \eqref{eq:defL}
  with the choice $W=-V-|A|^{2}$.
  The assumptions of Theorem \ref{the:invert},
  see \eqref{eq:WZdelta}, are satisfied if
  \begin{equation*}
    |x|^{2}\bra{x}^{\delta}(V+|A|^{2}-i \partial \cdot A)
    \quad\text{and}\quad 
    |x|\bra{x}^{\delta}A
    \quad\text{belong to}\quad 
    \ell^{1}L^{\infty}
  \end{equation*}
  and these conditions are implied by \eqref{eq:assVA},
  with a possibly different $\delta$.
  This proves \eqref{eq:resestim2} for $z$ in any bounded set.

  In order to apply Theorem \ref{the:resestlarge} we note that
  the operato $L$ is already in the form required
  for \eqref{eq:reseq}, choosing $Z=0$ and $W=-V$. We check
  assumption \eqref{eq:asscoeff}: the assumptions on
  $\widehat{B}$ (and $Z=0$) are satisfied. Next
  we split $W=-V$ as
  \begin{equation*}
    V_{r}=\one{|x|\le r}V,
    \qquad
    V'_{r}=V-V_{r}
  \end{equation*}
  and we note that from $|x|^{2}V\in \ell^{1}L^{\infty}$
  it follows that
  \begin{equation*}
    \||x|^{3/2}V_{r}\|_{\ell^{1}L^{2}L^{\infty}}
    \le
    \||x|^{2}V_{r}\|_{\ell^{1}L^{\infty}}
    \to0
    \quad\text{as}\quad 
    r\to0.
  \end{equation*}
  On the other hand, $|x|V'_{r}\in \ell^{1}L^{\infty}$
  for any $r$. Thus if we choose
  $W_{S}=-V_{r}$, $W_{L}=-V'_{r}$ for $r$ sufficiently small,
  then \eqref{eq:asscoeff} are satisfied. This proves
  \eqref{eq:resestim2} for all sufficiently large $z$
  belonging to the strip $|\Im z|\le1$, with a constant
  independent of $z$, and the proof is concluded.
\end{proof}

\begin{corollary}[]\label{cor:onehalfres}
  Under the assumptions of Theorem \ref{the:fullresest},
  for all $z\in \overline{\mathbb{C}^{\pm}}$ with
  $|\Im z|\le1$ the resolvent operator $R(z)=(L-z)^{-1}$
  satisfies the following estimate, uniform in $z$:
  \begin{equation}\label{eq:onehalfres}
    \||D|^{1/2}R(z)|D|^{1/2}f\|_{\dot Y}
    +
    \||x|^{-1/2}R(z)|x|^{-1/2}f\|_{\dot Y}
    \lesssim
    \|f\|_{\dot Y^{*}}.
  \end{equation}
\end{corollary}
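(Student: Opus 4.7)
The plan is to deduce both estimates from Theorem \ref{the:fullresest} by combining a simple Cauchy--Schwarz inequality, a dyadic decomposition of the data, and the dual form of the main resolvent estimate.

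The key auxiliary bound for the first term is
\begin{equation*}
\||x|^{-1/2}v\|_{\dot Y}^{2}\le \|v\|_{\dot X}\,\|v\|_{\dot Y},
\end{equation*}
valid for any $v$: writing $\int_{|x|\le R}|x|^{-1}|v|^{2}dx=\int_{0}^{R}\rho^{-1}\|v\|_{L^{2}(S_{\rho})}^{2}d\rho$ and applying Cauchy--Schwarz yields two factors, bounded respectively by $R^{1/2}\|v\|_{\dot X}$ (via the pointwise inequality $\|v\|_{L^{2}(S_{\rho})}\le \rho\|v\|_{\dot X}$) and $R^{1/2}\|v\|_{\dot Y}$ (via the definition of $\dot Y$). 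I would also exploit the selfadjointness of $L$: from $R(z)^{*}=R(\bar z)$, the estimate $\|R(z)\phi\|_{\dot X}\lesssim \|\phi\|_{\dot Y^{*}}$ in Theorem \ref{the:fullresest} dualizes to $\|R(z)g\|_{\dot Y}\lesssim \|g\|_{\dot X^{*}}$, \emph{uniformly} in $z$. This dual form is crucial since the direct bound $|z|^{1/2}\|R(z)g\|_{\dot Y}\lesssim \|g\|_{\dot Y^{*}}$ degenerates as $|z|\to 0$.

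To prove the $|x|^{-1/2}$-estimate, I would decompose $f=\sum_{j\in\mathbb{Z}}f_{j}$ with $f_{j}=\mathbf{1}_{E_{j}}f$ supported in the dyadic shell $E_{j}=\{2^{j}\le|x|<2^{j+1}\}$, and treat each $v_{j}=R(z)|x|^{-1/2}f_{j}$ separately. Direct computation from the definitions of $\dot Y^{*}$ and $\dot X^{*}$ gives $\||x|^{-1/2}f_{j}\|_{\dot Y^{*}}\simeq \|f_{j}\|_{L^{2}(E_{j})}$ and $\||x|^{-1/2}f_{j}\|_{\dot X^{*}}\lesssim 2^{j}\|f_{j}\|_{L^{2}(E_{j})}$, so that Theorem \ref{the:fullresest} together with the dual form produces $\|v_{j}\|_{\dot X}\lesssim \|f_{j}\|_{L^{2}(E_{j})}$ and $\|v_{j}\|_{\dot Y}\lesssim 2^{j}\|f_{j}\|_{L^{2}(E_{j})}$. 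Plugging into the Cauchy--Schwarz bound yields $\||x|^{-1/2}v_{j}\|_{\dot Y}\lesssim 2^{j/2}\|f_{j}\|_{L^{2}(E_{j})}$, and summation in $j$ via the triangle inequality gives
\begin{equation*}
\||x|^{-1/2}R(z)|x|^{-1/2}f\|_{\dot Y}\lesssim \sum_{j}2^{j/2}\|f_{j}\|_{L^{2}(E_{j})}\simeq \|f\|_{\dot Y^{*}},
\end{equation*}
as required.

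For the second term I would proceed by the same scheme, replacing the Cauchy--Schwarz inequality by its Plancherel analogue $\||D|^{1/2}v\|_{\dot Y}^{2}\lesssim \|v\|_{\dot Y}\|\partial v\|_{\dot Y}$ (which follows by a Littlewood--Paley square function argument from the boundedness of the Riesz transforms on $\dot Y$), and the spatial dyadic decomposition by a frequency-dyadic decomposition $f=\sum_{k}P_{k}f$. The main technical obstacle here is the non-locality of $|D|^{1/2}$: the frequency-localized pieces $P_{k}f$ are no longer compactly supported, so one must control them via square-function-type estimates compatible with the $\dot X$, $\dot Y$, $\dot Y^{*}$ norms. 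This is however a standard feature of the Agmon--H\"ormander framework, where classical Calder\'on--Zygmund theory applies.
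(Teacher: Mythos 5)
Your treatment of the $|x|^{-1/2}$ term is correct and is a genuinely different, more elementary argument than the paper's. The key ingredients—the Cauchy--Schwarz inequality $\||x|^{-1/2}v\|_{\dot Y}^{2}\le\|v\|_{\dot X}\|v\|_{\dot Y}$, the dual estimate $\|R(z)g\|_{\dot Y}\lesssim\|g\|_{\dot X^{*}}$ obtained from $R(z)^{*}=R(\bar z)$ (indeed crucial, as the direct bound $|z|^{1/2}\|R(z)\|_{\dot Y^{*}\to\dot Y}\lesssim1$ degenerates at $z=0$), and the computations $\||x|^{-1/2}f_{j}\|_{\dot Y^{*}}\simeq\|f_{j}\|_{L^{2}(E_{j})}$, $\||x|^{-1/2}f_{j}\|_{\dot X^{*}}\lesssim 2^{j}\|f_{j}\|_{L^{2}(E_{j})}$—are all verifiable, and the summation $\sum_{j}2^{j/2}\|f_{j}\|_{L^{2}(E_{j})}\simeq\|f\|_{\dot Y^{*}}$ closes the argument. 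The paper instead proves $\||x|^{-1}R(z)f\|_{\dot Y}\lesssim\|f\|_{\dot Y^{*}}$ from $\||x|^{-1}v\|_{\dot Y}\le\|v\|_{\dot X}$, dualizes to $\|R(z)|x|^{-1}f\|_{\dot Y}\lesssim\|f\|_{\dot Y^{*}}$, and then applies Stein--Weiss complex interpolation to the analytic family $|x|^{-(1-w)}R(z)|x|^{-w}$; your argument bypasses complex interpolation entirely for this term, which is a legitimate and instructive alternative.

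The treatment of the $|D|^{1/2}$ term, however, does not work as proposed, and the technical obstacle you acknowledge is not merely routine. Two distinct problems arise. First, the inequality $\||D|^{1/2}v\|_{\dot Y}^{2}\lesssim\|v\|_{\dot Y}\|\partial v\|_{\dot Y}$ is not a Plancherel-type identity and is not proved by a Littlewood--Paley square function argument alone: $\dot Y$ is an $\ell^{\infty}$-over-spatial-dyadic-scales norm, the spatial and frequency decompositions do not commute, and a square function in frequency loses the $\ell^{\infty}$ structure in space. The natural way to establish such a bound is exactly via Stein--Weiss complex interpolation applied to the family $|D|^{1-w}$, which is what you are trying to avoid. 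Second, and more fundamentally, the strategy of decomposing $f=\sum_{k}P_{k}f$ in frequency cannot reproduce the crucial step of your spatial argument: there the key was $\sum_{j}2^{j/2}\|f_{j}\|_{L^{2}(E_{j})}\simeq\|f\|_{\dot Y^{*}}$, which holds because the $f_{j}$ are disjointly supported in the same dyadic shells that define $\dot Y^{*}$. The $P_{k}f$ are not compactly supported, their $\dot Y^{*}$ norms have no $\ell^{1}_{k}$ summability in terms of $\|f\|_{\dot Y^{*}}$, and there is no analogue of the clean shell computations $\||x|^{-1/2}f_{j}\|_{\dot Y^{*}}\simeq\|f_{j}\|_{L^{2}(E_{j})}$ and $\||x|^{-1/2}f_{j}\|_{\dot X^{*}}\lesssim 2^{j}\|f_{j}\|_{L^{2}(E_{j})}$ at frequency scale $2^{k}$. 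The paper's proof of this half proceeds quite differently: it first establishes that the Riesz transforms are bounded on $\dot Y$ via $A_{2}$-weighted $L^{2}$ estimates and real interpolation, derives $\||D|R(z)f\|_{\dot Y}\lesssim\|f\|_{\dot Y^{*}}$ (hence by duality $\|R(z)|D|f\|_{\dot Y}\lesssim\|f\|_{\dot Y^{*}}$), and then applies Stein--Weiss interpolation to $T_{w}=|D|^{1-w}R(z)|D|^{w}$, using that $|D|^{iy}$ is bounded on $\dot Y$ and $\dot Y^{*}$ with at most polynomial growth in $y$. This machinery seems difficult to avoid for the $|D|^{1/2}$ estimate. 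I would recommend keeping your elementary proof of the $|x|^{-1/2}$ part and adopting the interpolation argument for the $|D|^{1/2}$ part.
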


\begin{proof}
  Recall that $|x|^{-s}$ is in the Muckenhoupt class
  $A_{2}$ if and only if $|s|<n$, and this implies that the
  Riesz operator 
  $\rie v:= \mathcal{F}^{-1}(\widehat{v}(\xi)\xi/|\xi|)$
  (where both $\mathcal{F}v$ and $\widehat{v}$ denote Fourier
  transform) satisfies the weighted estimate
  \begin{equation}\label{eq:estrie}
    \||x|^{-s}\rie v\|_{L^{2}}\lesssim\||x|^{-s}v\|_{L^{2}}
  \end{equation}
  for all $|s|<n/2$. Introduce the weighted dyadic norms
  \begin{equation*}
    \textstyle
    \|v\|_{\ell^{q}(2^{-js})L^{2}}:=
    \left\|2^{-js}\|v\|_{L^{2}(C_{j})}
    \right\|_{\ell^{q}_{j}}
    =
    \left(\sum_{j\in \mathbb{Z}}2^{-qjs}
    \|v\|_{L^{2}(C_{j})}^{q}\right)^{1/q}
  \end{equation*}
  where $C_{j}\subseteq \mathbb{R}^{n}$ is the ring 
  $2^{j}\le|x|<2^{j+1}$ as usual. Then \eqref{eq:estrie}
  can be written
  \begin{equation}\label{eq:seqrie}
    \textstyle
    \|\rie v\|_{\ell^{2}(2^{-js})L^{2}}
    \lesssim
    \|v\|_{\ell^{2}(2^{-js})L^{2}}
    \qquad
    \forall|s|<\frac n2.
  \end{equation}
  We recall now the real interpolation formula:
  if $q_{0},q_{1},q\in(0,\infty]$, $\theta\in(0,1)$,
  $s_{0}\neq s_{1}\in \mathbb{R}$,
  \begin{equation*}
    (\ell^{q_{0}}(2^{-js_{0}})L^{2},
    \ell^{q_{1}}(2^{-js_{1}})L^{2})_{\theta , q}
    \simeq
    \ell^{q}(2^{-js})L^{2},
    \qquad
    s=(1-\theta)s_{0}+\theta s_{1}
  \end{equation*}
  (Theorem 5.6.1 in \cite{BerghLofstrom76-a}).
  If we apply the formula with
  $q_{0}=q_{1}=2$, $q=\infty$, $s_{0}=1/2-\epsilon$, 
  $s_{1}=1/2+\epsilon$
  with $\epsilon>0$ and $\theta=1/2$, we obtain
  \begin{equation*}
    (\ell^{2}(2^{-j(1/2-\epsilon)})L^{2},
    \ell^{2}(2^{-j(1/2-\epsilon)})L^{2})_{1/2 , \infty}
    =
    \ell^{\infty}(2^{-j/2})L^{2}
    \simeq
    \dot Y.
  \end{equation*}
  Then, interpolating the inequalities \eqref{eq:seqrie} for
  $s=1/2\pm \epsilon$ with $\epsilon>0$ small, we
  obtain
  \begin{equation}\label{eq:rieY}
    \|\rie v\|_{\dot Y}\lesssim\|v\|_{\dot Y}
  \end{equation}
  i.e., the Riesz operator is bounded on $\dot Y$.
  By duality, $\rie$ is also bounded on $\dot Y^{*}$.

  Exactly the same argument applies to the 
  Calder\'{o}n--Zygmund operators
  $|D|^{iy}$, $y\in \mathbb{R}$, which are defined via
  the formula 
  $|D|^{iy}v:=\mathcal{F}^{-1}(|\xi|^{iy}\widehat{v}(\xi))$,
  thus we have for all $y\in \mathbb{R}$
  \begin{equation}\label{eq:DyY}
    \||D|^{iy}v\|_{\dot Y}\lesssim\|v\|_{\dot Y}.
  \end{equation}
  with a norm growing polynomially in $y\in \mathbb{R}$
  (like $|y|^{n/2}$ at most).
  The same property holds for $|D|^{iy}:\dot Y^{*}\to \dot Y^{*}$.

  Now we can write, by \eqref{eq:rieY} and \eqref{eq:resestim2},
  \begin{equation*}
    \||D|R(z)f\|_{\dot Y}=
    \|\rie \cdot\partial R(z)f\|_{\dot Y}
    \lesssim
    \|f\|_{\dot Y^{*}}
  \end{equation*}
  uniformly in $z$. Thus $|D|R(z):\dot Y^{*}\to \dot Y$
  is bounded, uniformly in $z$, and by duality the same holds
  for $R(z)|D|$.

  We now apply
  Stein--Weiss interpolation to the analytic family of operators
  \begin{equation*}
    T_{w}:=|D|^{1-w}R(z)|D|^{w},
    \qquad w\text{ in the complex strip } 
    0\le\Im w\le 1.
  \end{equation*}
  Indeed, writing
  \begin{equation*}
    T_{iy}=|D|^{-iy}|D|R(z)|D|^{iy},
    \qquad
    T_{1+iy}=|D|^{-iy}\cdot R(z)|D|\cdot |D|^{iy}
  \end{equation*}
  and using the previous steps, we see that
  $T_{w}:\dot Y^{*}\to \dot Y$ is a bounded operator
  for $\Re w=0$ and $\Re w=1$, uniformly in $y=\Im w$,
  which implies $T_{w}:\dot Y^{*}\to \dot Y$ is a bounded
  operator for all $w$ in the strip. Taking $w=1/2$
  we prove the first part of \eqref{eq:onehalfres}.

  Consider now the second part of \eqref{eq:onehalfres}.
  Recalling that $\||x|^{-1}v\|_{\dot Y}\le\|v\|_{\dot X}$,
  from \eqref{eq:resestim2} we have in particular
  \begin{equation*}
    \||x|^{-1}R(z)f\|_{\dot Y}\lesssim\|f\|_{\dot Y^{*}}
  \end{equation*}
  and hence by duality
  \begin{equation*}
    \|R(z)|x|^{-1}f\|_{\dot Y}\lesssim\|f\|_{\dot Y^{*}}.
  \end{equation*}
  Interpolating between these estimates as in the first
  part of the proof, we obtain \eqref{eq:onehalfres}.
\end{proof}

\begin{remark}[]\label{rem:delta0}
  The weight $\bra{x}^{\delta}$ with $\delta>0$
  in assumption \eqref{eq:assVA} is required only to
  exclude resonances embedded or at the treshold, using
  Lemma \ref{lem:eigv}.
  If we assume a priori the condition
  \begin{equation}\label{eq:noreson}
    (L+\lambda)v=0,
    \qquad
    v\in H^{2}_{loc}\cap \dot Y,
    \qquad
    \lambda\ge0
    \quad\implies\quad
    v=0
  \end{equation}
  then Lemma \ref{lem:eigv} is no longer necessary
  and Theorem \ref{the:fullresest} holds with
  $\delta=0$.
\end{remark}

\begin{remark}[Gauge transformation]\label{rem:gauge}
  If we apply a change of gauge
  \begin{equation*}
    u=e^{i \phi(x)}v
  \end{equation*}
  the magnetic Laplacian transforms as follows:
  \begin{equation}\label{eq:gaugefor}
    \Delta_{A}(e^{i \phi}v)=e^{i \phi}\Delta_{\widetilde{A}}v,
    \qquad
    \widetilde{A}=A+\partial \phi.
  \end{equation}
  In particular, if we choose
  \begin{equation*}
    \phi(x)=\Delta^{-1}\partial \cdot A
    \implies
    \partial \cdot \widetilde{A}=0
  \end{equation*}
  we see that we can gauge away the term $\partial \cdot A$
  with an appropriate choice of $\phi$
  in Theorem \ref{the:fullresest},
  although the details require some work.
  Note also that the magnetic field $B$ is \emph{gauge invariant},
  since 
  $\partial_{j}\partial_{k}\phi-\partial_{k}\partial_{j}\phi=0$.
\end{remark}

It will be useful to prepare estimates for the gauge transform
in Sobolev spaces.

\begin{lemma}[Boundedness of the gauge transform]\label{lem:gaugeest}
  Assume $\phi:\mathbb{R}^{n}\to \mathbb{R}$ satisfies
  $\partial \phi\in L^{n,\infty}$.
  Then we have 
  \begin{equation*}
    \|e^{i \phi}v\|_{\dot H^{s}_{p}}\simeq
    \|v\|_{\dot H^{s}_{p}},
    \qquad
    \|e^{i \phi}v\|_{\dot H^{-s}_{p'}}\simeq
    \|v\|_{\dot H^{-s}_{p'}}
  \end{equation*}
  for all
  $s\in[0,1]$ and $1<p<\frac ns$
  i.e. $\frac{n}{n-s}<p'<\infty$.
\end{lemma}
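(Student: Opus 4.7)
The plan is to handle the endpoints $s=0$ and $s=1$ directly, then fill in the intermediate exponents by complex interpolation, and finally get the dual statement by duality.

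For $s=0$ the claim is trivial since $|e^{i\phi(x)}|=1$ pointwise gives $\|e^{i\phi}v\|_{L^p}=\|v\|_{L^p}$ for every $1<p<\infty$.

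For $s=1$ I would use the Leibniz rule
\begin{equation*}
  \partial(e^{i\phi}v)=e^{i\phi}\bigl(\partial v+i(\partial\phi)\,v\bigr),
\end{equation*}
so that $\|\partial(e^{i\phi}v)\|_{L^p}\le \|\partial v\|_{L^p}+\|(\partial\phi)v\|_{L^p}$. The key estimate is then the bound on the product, which I would obtain combining the Sobolev embedding into Lorentz spaces
\begin{equation*}
  \dot H^{1}_{p}\hookrightarrow L^{p^{*},p},
  \qquad \tfrac1{p^{*}}=\tfrac1p-\tfrac1n,
\end{equation*}
with the Hölder inequality for Lorentz spaces
\begin{equation*}
  \|fg\|_{L^{p}}\lesssim \|f\|_{L^{n,\infty}}\|g\|_{L^{p^{*},p}},
  \qquad \tfrac1p=\tfrac1n+\tfrac1{p^{*}}.
\end{equation*}
These combine to give $\|(\partial\phi)v\|_{L^p}\lesssim\|\partial\phi\|_{L^{n,\infty}}\|\partial v\|_{L^p}$, valid as long as $1<p<n$. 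The same argument applied to $e^{-i\phi}$ gives the reverse inequality, proving the $s=1$ case on the full range $1<p<n$.

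For the intermediate values $s\in(0,1)$ I would invoke complex interpolation of the linear operator $T_{\phi}:v\mapsto e^{i\phi}v$. Fixing a target exponent $p\in(1,n/s)$, I choose $1<p_{1}<n$ close to $n$ and $p_{0}\in(1,\infty)$ so that $\frac1p=\frac{1-s}{p_{0}}+\frac{s}{p_{1}}$, which is possible precisely because $p<n/s$. The operator $T_{\phi}$ is bounded on $L^{p_{0}}=\dot H^{0}_{p_{0}}$ (step 1) and on $\dot H^{1}_{p_{1}}$ (step 2), so the complex interpolation identity $[\dot H^{0}_{p_{0}},\dot H^{1}_{p_{1}}]_{s}=\dot H^{s}_{p}$ (a standard Triebel--Lizorkin result) yields boundedness on $\dot H^{s}_{p}$. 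Applying the same argument to $e^{-i\phi}$ gives the equivalence.

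Finally, the dual statement for $\dot H^{-s}_{p'}$ follows from the self-duality of the $e^{i\phi}$ multiplication: $(T_{\phi})^{*}=T_{-\phi}$, and $(\dot H^{s}_{p})^{*}=\dot H^{-s}_{p'}$, so the bound on $\dot H^{s}_{p}$ for $1<p<n/s$ dualizes to a bound on $\dot H^{-s}_{p'}$ for $\frac{n}{n-s}<p'<\infty$. The main obstacle in the argument will be ensuring the refined Sobolev embedding into the Lorentz space $L^{p^{*},p}$ (rather than only $L^{p^{*}}$), since this is precisely what makes the Hölder product close at the endpoint $s=1$ and allows one to absorb the $L^{n,\infty}$ norm of $\partial\phi$.
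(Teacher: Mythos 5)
Your proposal is correct and matches the paper's argument: both prove the $L^p$ isometry, handle $s=1$ via the Leibniz rule together with the Lorentz--Sobolev embedding $\dot H^{1}_{p}\hookrightarrow L^{\frac{np}{n-p},p}$ and Lorentz--H\"{o}lder against $\partial\phi\in L^{n,\infty}$, then interpolate to get $0<s<1$ and conclude by duality. The only cosmetic difference is that you spell out the interpolation exponents $p_0,p_1$ explicitly, which the paper leaves implicit.
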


\begin{proof}
  Let $Tv:=e^{i \phi(x)}v$ be the multiplication operator.
  $T$ is an isometry of $L^{p}$ into itself
  for all $p\in[1,\infty]$. Moreover
  \begin{equation*}
    \|Tv\|_{\dot H^{1}_{p}}
    \simeq
    \|\partial Tv\|_{\dot H^{1}_{p}}
    \le
    \|v\partial \phi\|_{L^{p}}+\|\partial v\|_{L^{p}}
    \lesssim
    \|\partial \phi\|_{L^{n,\infty}}
    \|v\|_{\frac{np}{n-p},p}
    +\|\partial v\|_{L^{p}}
  \end{equation*}
  and by Sobolev embedding in Lorentz spaces
  \begin{equation*}
    \|v\|_{\frac{np}{n-p},p} \lesssim\|v\|_{\dot H^{1}_{p}}
  \end{equation*}
  valid for $1<p<n$, we deduce that
  $T$ is a bounded operator on $\dot H^{1}_{p}$ provided $1<p<n$.
  Thus by complex interpolation we obtain that
  $T$ is bounded on $\dot H^{s}_{p}$ provided
  $1<p<n/s$, and since $T^{-1}$ i.e. multiplication by
  $e^{-i \phi}$ enjoys the same property,
  the first claim is proved. 
  The second claim follows by duality.
\end{proof}

We can now give a version of Theorem \ref{the:fullresest}
improved with the use of the gauge transform,
as mentioned in Remark \ref{rem:delta0}:

\begin{corollary}[]\label{cor:equivass}
  Let $n\ge3$.
  Assume the operator $L$ defined in \eqref{eq:opL} is selfadjoint
  and non negative on $L^{2}(\mathbb{R}^{n})$, with domain
  $H^{2}(\mathbb{R}^{n})$.
  Assume $V:\mathbb{R}^{n}\to \mathbb{R}$,
  $A:\mathbb{R}^{n}\to \mathbb{R}^{n}$ 
  and $\phi:\mathbb{R}^{n}\to \mathbb{R}$ satisfy
  for some $\delta>0$
  \begin{equation}\label{eq:assVAequiv`'}
    |x|^{2}\bra{x}^{\delta}(V-i \partial \cdot A-i \Delta \phi)
    \quad\text{and}\quad 
    |x|\bra{x}^{\delta}(|A|+|\partial \phi|+|B|)
    \quad\text{belong to}\quad 
    \ell^{1}L^{\infty}.
  \end{equation}
  Moreover, assume 0 is not a resonance for $L$,
  in the sense of Definition \ref{def:reson}.

  Then estimates \eqref{eq:resestim2} and \eqref{eq:onehalfres}
  are valid. 
\end{corollary}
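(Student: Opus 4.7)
The plan is to reduce the statement to Theorem \ref{the:fullresest} via a gauge change. Setting $\widetilde{A}:=A+\partial\phi$ and invoking \eqref{eq:gaugefor}, one has $L=e^{i\phi}\widetilde{L}e^{-i\phi}$ with $\widetilde{L}:=-\Delta_{\widetilde{A}}+V$. The key check is that $\widetilde{L}$ satisfies assumption \eqref{eq:assVA}: since $\partial\cdot\widetilde{A}=\partial\cdot A+\Delta\phi$, one has
\begin{equation*}
  V-i\partial\cdot\widetilde{A}=V-i\partial\cdot A-i\Delta\phi,
\end{equation*}
which is precisely the quantity controlled in the hypothesis; the bound on $\widetilde{A}$ follows from $|\widetilde{A}|\le|A|+|\partial\phi|$; and the tangential field is gauge invariant ($\widetilde{B}_{jk}=B_{jk}$, because $\partial_j\partial_k\phi=\partial_k\partial_j\phi$), so the assumption on $\widehat{B}$ transfers verbatim.

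Next I would argue that $\widetilde{L}$ is nonnegative selfadjoint with domain $H^2$ and that $0$ is not a resonance for $\widetilde{L}$. The first is immediate from unitary conjugation once one notes that multiplication by $e^{i\phi}$ preserves $H^2$, which holds because $|\partial\phi|\lesssim|x|^{-1}$ is a Hardy-type perturbation. For the second, if $\widetilde{v}$ were a resonant state for $\widetilde{L}$, then $v=e^{i\phi}\widetilde{v}$ would satisfy $Lv=0$, with $|v|=|\widetilde{v}|$ and $|\partial v|\le|\partial\widetilde{v}|+|\partial\phi||\widetilde{v}|\lesssim|\partial\widetilde{v}|+|x|^{-1}|\widetilde{v}|$, so the weighted $L^2$ bounds \eqref{eq:resona} for $\widetilde{v}$ carry over to $v$, contradicting the hypothesis on $L$. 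Theorem \ref{the:fullresest} applied to $\widetilde{L}$ thus yields \eqref{eq:resestim2} for $\widetilde{R}(z):=(\widetilde{L}-z)^{-1}$.

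It remains to transport the estimate back via $R(z)=e^{i\phi}\widetilde{R}(z)e^{-i\phi}$. The norms $\|\cdot\|_{\dot X}$, $\|\cdot\|_{\dot Y}$, $\|\cdot\|_{\dot Y^*}$ depend only on $|v|$, so multiplication by $e^{\pm i\phi}$ is an isometry on each, which handles the first two terms of \eqref{eq:resestim2} immediately. For $\|\partial R(z)f\|_{\dot Y}$, I would expand by the product rule
\begin{equation*}
  \partial R(z)f=i(\partial\phi)R(z)f+e^{i\phi}\partial\widetilde{R}(z)e^{-i\phi}f,
\end{equation*}
bound the second summand by Theorem \ref{the:fullresest}, and control the first by
\begin{equation*}
  \|(\partial\phi)R(z)f\|_{\dot Y}\lesssim\||x|^{-1}R(z)f\|_{\dot Y}\lesssim\|R(z)f\|_{\dot X}\lesssim\|f\|_{\dot Y^*},
\end{equation*}
using $|\partial\phi|\lesssim|x|^{-1}$ together with the elementary embedding $\||x|^{-1}v\|_{\dot Y}\lesssim\|v\|_{\dot X}$.

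Finally, \eqref{eq:onehalfres} will follow from \eqref{eq:resestim2} for the present $R(z)$ by the argument already given in the proof of Corollary \ref{cor:onehalfres}, which uses only that bound together with boundedness of the Riesz transforms and of $|D|^{iy}$ on $\dot Y$ and Stein--Weiss interpolation---none of which depend on the specific form of $L$. The one potentially delicate point is the resonance transfer under the gauge change, but that reduces to the pointwise identity $|e^{i\phi}|=1$ and the bound $|\partial\phi|\lesssim|x|^{-1}$; everything else is bookkeeping.
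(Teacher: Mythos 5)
Your proposal is correct and follows essentially the same route as the paper: gauge away $\partial\phi$ to reduce to Theorem \ref{the:fullresest} for $\widetilde{L}=-\Delta_{A+\partial\phi}+V$, use that the $\dot X,\dot Y,\dot Y^{*}$ norms see only $|v|$, handle the derivative term by the product rule with $|\partial\phi|\lesssim|x|^{-1}$ and $\||x|^{-1}v\|_{\dot Y}\le\|v\|_{\dot X}$, and obtain \eqref{eq:onehalfres} by the duality/interpolation argument of Corollary \ref{cor:onehalfres}. Your extra checks (selfadjointness of $\widetilde{L}$ and transfer of the no-resonance condition under the gauge change) are details the paper leaves implicit, and they are handled correctly.
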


\begin{proof}
  We apply the gauge transformation \eqref{eq:gaugefor}.
  By assumption the new potential $\widetilde{A}=A+\partial \phi$
  satisfies \eqref{eq:assVA}, while the magnetic field $B$
  does not change, since
  $\partial_{j}(\partial_{k}\phi)-\partial_{k}(\partial_{j} \phi)
    =0$. 
  Thus we are in position to apply Theorem \ref{the:fullresest}
  and we obtain that the resolvent operator
  $\widetilde{R}(z)=(\widetilde{L}-z)^{-1}$,
  where $\widetilde{L}=-\Delta_{\widetilde{A}}+V$,
  satisfies estimate \eqref{eq:resestim2}. Since
  \begin{equation*}
    \widetilde{R}(z)=e^{i \phi}R(z)e^{-i \phi},
  \end{equation*}
  this gives immediately the uniform boundedness of
  $R(z):\dot Y^{*}\to \dot X$ and
  $|z|^{1/2}R(z):\dot Y^{*}\to \dot Y$.
  For the derivative term, we have
  \begin{equation*}
    \|\partial R(z)f\|_{\dot Y}
    \le
    \|\partial \widetilde{R}(z)e^{i \phi}f\|_{\dot Y}
    +
    \|(\partial \phi) \widetilde{R}(z)e^{i \phi}f\|_{\dot Y}.
  \end{equation*}
  The first term is bounded by $\dot Y^{*}$ thanks to the
  estimate for $\widetilde{R}(z)$. For the second term,
  we note that the assumptions on $\phi$ imply
  $|\partial \phi|\lesssim|x|^{-1}$
  and hence we can write
  \begin{equation*}
    \|(\partial \phi) \widetilde{R}(z)e^{i \phi}f\|_{\dot Y}
    \lesssim
    \||x|^{-1} \widetilde{R}(z)e^{i \phi}f\|_{\dot Y}
    \le
    \|\widetilde{R}(z)e^{i \phi}f\|_{\dot X}
    \lesssim
    \|f\|_{\dot Y^{*}}
  \end{equation*}
  and the proof of \eqref{eq:resestim2} for $R(z)$ is concluded.
  The second estimate \eqref{eq:onehalfres} is proved
  by duality and interpolation as in the proof of
  Corollary \ref{cor:onehalfres}.
\end{proof}

\section{Smoothing estimates}\label{sec:smoo_esti}

Using the Kato smoothing theory, the resolvent estimates
of the previos section can be convterted into estimates
for the time--dependent Schr\"{o}dinger flow with little effort.
The theory was initiated in \cite{Kato65-b} and took
the final fomr in \cite{KatoYajima89-a}
(see also \cite{ReedSimon78-a}, \cite{Mochizuki10-a}); it was
further expanded in \cite{DAncona15-a} to include in
the general theory also the wave and Klein--Gordon flows.
Here we follow the formulation\footnote[1]{We take the chance to correct a couple of typos in
\cite{DAncona15-a}, in the definition of smoothing operators and
in the statement of Theorem \ref{the:2abs}.}
of \cite{DAncona15-a}.

Let $\mathcal{H}$, $\mathcal{H}_{1}$ be two Hilbert spaces
and $H$ a selfadjoint operator in $\mathcal{H}$.
Denote with $R(z)=(H-z)^{-1}$ the resolvent operator of $H$,
and with $\Im R(z)=2^{-1}(R(z)-R(z)^{*})$ its imaginary part.

\begin{definition}[Smoothing operator]\label{def:smooop}
  A closed operator $A$ from $\mathcal{H}$ to $\mathcal{H}_{1}$
  with dense domain $D(A)$ is called:
  \\
  (i) $H$-\emph{smooth}, with constant $a$,
  if $\exists\epsilon_{0}$ such that for every 
  $\epsilon,\lambda\in \mathbb{R}$ with 
  $0<|\epsilon|< \epsilon_{0}$ the following
  uniform bound holds:
  \begin{equation}\label{eq:smoo}
    |(\Im R(\lambda+i \epsilon)A^{*}v,A^{*}v)_{\mathcal{H}}|
    \le
    a\|v\|_{\mathcal{H}_{1}}^{2},
    \qquad
    v\in D(A^{*});
  \end{equation}
  (ii) $H$-\emph{supersmooth}, with constant $a$, if 
  in place of \eqref{eq:smoo} one has
  \begin{equation}\label{eq:ssmoo}
    |(R(\lambda+i \epsilon)A^{*}v,A^{*}v)_{\mathcal{H}}|
    \le
    a\|v\|_{\mathcal{H}_{1}}^{2},
    \qquad
    v\in D(A^{*}).
  \end{equation}
 \end{definition}

The following result is proved in 
Lemma 3.6 and Theorem 5.1 of \cite{Kato65-b} 
(see also Theorem XIII.25 in \cite{ReedSimon78-a}).
Here $L^{2} \mathcal{H}$ denotes the space of $L^{2}$
functions on $\mathbb{R}$ with values in $\mathcal{H}$:

\begin{theorem}\label{the:1abs}
  Let $A:\mathcal{H}\to \mathcal{H}_{1}$ be a closed operator
  with dense domain $D(A)$. 
  Then $A$ is $H$-smooth
  with constant $a$ if and only if, for any 
  $v\in \mathcal{H}$, one has
  $e^{-itH}v\in D(A)$ for almost every $t$
  and the following estimate holds:
  \begin{equation}\label{eq:smooest}
    \|Ae^{-itH}v\|_{L^{2}\mathcal{H}_{1}}
    \le
    2a^{\frac12}\|v\|_{\mathcal{H}}.
  \end{equation}
\end{theorem}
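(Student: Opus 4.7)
The plan is to prove the two implications via the spectral theorem together with Plancherel in the time variable. First I would record two basic identities. For $z=\lambda+i\epsilon$ with $\epsilon>0$, the resolvent admits the strong integral representation
$$R(z) \;=\; i\int_0^\infty e^{izt}\,e^{-itH}\,dt,$$
and since $R(z)^{*}=R(\bar z)$, we have $\Im R(z)=\epsilon\,R(z)^{*}R(z)$, a positive operator. Consequently the smoothing hypothesis \eqref{eq:smoo} is just the pointwise bound $\sqrt{\epsilon}\,\|R(\lambda+i\epsilon)A^{*}v\|_{\mathcal{H}}\le a^{1/2}\|v\|_{\mathcal{H}_1}$.

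For the forward implication (smoothing $\Rightarrow$ time estimate), the idea is to use Stone's formula / Plemelj to interpret $\pi^{-1}\Im R(\lambda+i0)$ as the density of the absolutely continuous part of the spectral measure. A standard Fatou argument upgrades \eqref{eq:smoo} to the statement that the measure $\sigma_{A^{*}w}(d\lambda):=d(E_\lambda A^{*}w,A^{*}w)$ is absolutely continuous with density $\le\pi^{-1}a\|w\|^{2}$. Then for $v\in\mathcal{H}$ and $w\in D(A^{*})$ the function $t\mapsto (Ae^{-itH}v,w)_{\mathcal{H}_1}=(e^{-itH}v,A^{*}w)_{\mathcal{H}}$ is the Fourier transform of the complex spectral measure $\mu_{v,A^{*}w}(d\lambda)=d(E_\lambda v,A^{*}w)$. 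Cauchy--Schwarz on the spectral level,
$$|d(E_\lambda v,A^{*}w)|^{2}\;\le\; d(E_\lambda v,v)\,\sigma_{A^{*}w}(d\lambda),$$
together with the density bound and $\int d(E_\lambda v,v)=\|v\|^{2}$, shows $\mu_{v,A^{*}w}$ is absolutely continuous with $L^{2}_\lambda$ density of norm $\lesssim a^{1/2}\|v\|\|w\|$. Plancherel then yields the quadratic form estimate for $Ae^{-itH}v$ tested against $w$, and taking the supremum over $\|w\|_{\mathcal{H}_1}\le 1$ delivers \eqref{eq:smooest} (the exact constant $2a^{1/2}$ falls out with the usual Fourier normalization).

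For the converse, I would run essentially the same computation in reverse. Using the Laplace representation together with duality $(Ae^{-itH}v,w)_{\mathcal{H}_1}=(v,e^{itH}A^{*}w)_{\mathcal{H}}$, the quantity $\epsilon\|R(\lambda+i\epsilon)A^{*}w\|^{2}_{\mathcal{H}}=(\Im R(\lambda+i\epsilon)A^{*}w,A^{*}w)$ can be rewritten as a time integral involving $e^{-\epsilon|t|}e^{i\lambda t}$ against the scalar function $t\mapsto(Ae^{-itH}u,w)_{\mathcal{H}_1}$, and Plancherel in $t$ (after averaging out an auxiliary vector $u\in\mathcal{H}$) reduces the desired uniform bound to the $L^{2}_t\mathcal{H}_1$ bound on $Ae^{-itH}$ supplied by hypothesis \eqref{eq:smooest}.

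The main technical obstacle is not the identities themselves but the care required with domains: $A$ and $A^{*}$ are merely closed, so one must justify that $e^{-itH}v\in D(A)$ for a.e.\ $t$ (the conclusion in the statement), extend the identities by density from $D(A^{*})$ to the full Hilbert space, and justify the Fatou / Poisson-kernel argument that turns the uniform resolvent bound \eqref{eq:smoo} into the pointwise density bound for $(E_\lambda A^{*}w,A^{*}w)$. These are standard but delicate points and are the reason the theorem is usually deduced from the more abstract framework of Theorem XIII.25 in \cite{ReedSimon78-a} rather than proved from scratch.
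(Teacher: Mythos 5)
The paper does not prove Theorem \ref{the:1abs} at all: it is quoted verbatim from Lemma 3.6 and Theorem 5.1 of \cite{Kato65-b} (see also Theorem XIII.25 of \cite{ReedSimon78-a}), so there is no internal proof to compare against. Your outline is a faithful reconstruction of exactly that classical argument: the identity $\Im R(\lambda+i\epsilon)=\epsilon R(\lambda+i\epsilon)^{*}R(\lambda+i\epsilon)$, the Poisson--kernel/Fatou upgrade of \eqref{eq:smoo} to absolute continuity of $d(E_\lambda A^{*}w,A^{*}w)$ with density $\le\pi^{-1}a\|w\|^{2}$, Cauchy--Schwarz for the matrix-valued spectral measure, Plancherel in $t$, and the Laplace representation of the resolvent for the converse. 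The architecture is right and I see no wrong step.

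Two caveats, both of which you flag but neither of which you discharge. First, the forward direction as written only yields the scalar bound $\sup_{\|w\|\le1}\|(e^{-itH}v,A^{*}w)\|_{L^{2}_{t}}\lesssim a^{1/2}\|v\|$; passing from this to the assertion that $e^{-itH}v\in D(A)$ for a.e.\ $t$ together with the vector-valued bound \eqref{eq:smooest} is the actual content of Kato's Lemma 3.6. It is done by showing that $\Phi\mapsto\int e^{itH}A^{*}\Phi(t)\,dt$, defined on $D(A^{*})$-valued step functions, is bounded $L^{2}\mathcal{H}_{1}\to\mathcal{H}$ (this uses the polarized density bound again, not just the scalar estimate term by term), and then identifying its adjoint using $A=A^{**}$; without this the sup over $w$ cannot be moved inside the $t$-integral. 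Second, in the converse direction the most direct route --- writing $(R(\lambda+i\epsilon)A^{*}w,u)=i\int_{0}^{\infty}e^{i\lambda s-\epsilon s}(w,Ae^{isH}u)\,ds$ and applying Cauchy--Schwarz in $s$ --- recovers \eqref{eq:smoo} only with constant $2a$ rather than $a$, so the phrase ``the exact constant falls out with the usual Fourier normalization'' hides a genuine factor-of-two bookkeeping issue; the sharp constant requires Kato's Plancherel-in-$\lambda$ argument rather than pointwise Cauchy--Schwarz. Neither point invalidates the approach, but both must be carried out for the statement as an \emph{equivalence with the stated constants}.
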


Thus $H$-smoothness is equivalent to the smoothing
estimate \eqref{eq:smooest} for the homogeneous
flow $e^{-itH}$. In a similar way,
$H$-supersmoothness is equivalent to a
\emph{nonhomogeneous} estimate:

\begin{theorem}[\cite{DAncona15-a}]\label{the:2abs}
  Let $A:\mathcal{H}\to \mathcal{H}_{1}$ be a closed operator
  with dense domain $D(A)$. 
  Assume $A$ is $H$-supersmooth with constant $a$.
  Then
  $e^{-itH}v\in D(A)$ for almost any $t\in \mathbb{R}$
  and any $v\in \mathcal{H}$; moreover,
  for any step function 
  $h(t):\mathbb{R}\to D(A^{*})$,
  $Ae^{-i(t-s)H}A^{*}h(s)$ is Bochner integrable in $s$ 
  over $[0,t]$ (or $[t,0]$)
  and satisfies, for all $\epsilon\in(-\epsilon_{0},\epsilon_{0})$,
  the estimate
  \begin{equation}\label{eq:ssmooest}
    \textstyle
    \|e^{-|\epsilon| t}\int_{0}^{t}Ae^{-i(t-s)H}A^{*}h(s)ds\|
    _{L^{2}\mathcal{H}_{1}}
    \le 2a
    \|e^{-|\epsilon| t}h(t\|_{L^{2}\mathcal{H}_{1}}.
  \end{equation}
  Conversely, if \eqref{eq:ssmooest} holds, then $A$
  is $H$-supersmooth with constant $2a$.
\end{theorem}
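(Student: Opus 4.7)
\medskip
\noindent\textbf{Proof plan for Theorem \ref{the:2abs}.}
The plan is to pass to the Fourier (in time) side and reduce both the direct and the converse implication to the elementary fact that a time--invariant convolution operator on $L^{2}(\mathbb{R};\mathcal H_{1})$ has norm equal to the $L^{\infty}$--norm of its operator symbol. For the direct implication, assume $A$ is $H$--supersmooth with constant $a$ and let $h:\mathbb{R}\to D(A^{*})$ be a step function. First I would observe that the polarisation identity applied to the sesquilinear form $(R(z)A^{*}v,A^{*}w)_{\mathcal H}$ upgrades the diagonal bound \eqref{eq:ssmoo} to the operator bound
\begin{equation*}
  \|AR(\lambda+i\epsilon)A^{*}\|_{\mathcal H_{1}\to\mathcal H_{1}}\le 2a,
  \qquad 0<|\epsilon|<\epsilon_{0},\ \lambda\in\mathbb{R}.
\end{equation*}
(This uses that since $h(s)\in D(A^{*})$ one has $A^{*}h(s)\in\mathcal H$, so the spectral--theorem calculus makes $R(z)A^{*}h(s)\in D(A^{**})=\overline{D(A)}$ and the product $AR(z)A^{*}$ is closable and extends to a bounded operator on $\mathcal H_{1}$.)

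Next I would rewrite
\begin{equation*}
  \tilde g(t):=e^{-|\epsilon|t}\textstyle\int_{0}^{t}Ae^{-i(t-s)H}A^{*}h(s)\,ds,
  \qquad
  \tilde h(s):=e^{-|\epsilon|s}h(s)
\end{equation*}
(with the usual change of sign when $t<0$) as a convolution on $\mathbb{R}_{+}$:
\begin{equation*}
  \tilde g(t)=A(K*\tilde h)(t),\qquad
  K(\tau)=\mathbf{1}_{[0,\infty)}(\tau)\,e^{-(iH+|\epsilon|)\tau}.
\end{equation*}
Taking the vector--valued Fourier transform in $t$ I get, with a suitable sign convention, $\widehat{\tilde g}(\lambda)=iAR(-\lambda+i|\epsilon|)A^{*}\,\widehat{\tilde h}(\lambda)$, so Plancherel together with the uniform bound above gives
\begin{equation*}
  \|\tilde g\|_{L^{2}\mathcal H_{1}}
  \le 2a\,\|\tilde h\|_{L^{2}\mathcal H_{1}},
\end{equation*}
which is exactly \eqref{eq:ssmooest}. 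The pointwise statement $e^{-itH}v\in D(A)$ a.e. and the Bochner integrability of the integrand follow from this bound by the standard density argument: step functions with values in $D(A^{*})$ are dense in $L^{2}\mathcal H_{1}$, and the resulting a priori estimate allows to close everything up to a set of measure zero.

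For the converse, assume \eqref{eq:ssmooest}. Given $\lambda\in\mathbb{R}$, $0<|\epsilon|<\epsilon_{0}$ and $v\in D(A^{*})$, I would write the resolvent as the Laplace transform of the semigroup $e^{-itH}e^{-|\epsilon|t}$ and note that the very same Fourier computation as above shows that the $L^{2}(\mathbb{R};\mathcal H_{1})\to L^{2}(\mathbb{R};\mathcal H_{1})$ norm of the convolution kernel $t\mapsto Ae^{-itH}A^{*}e^{-|\epsilon|t}\mathbf{1}_{t>0}$ equals $\sup_{\lambda}\|AR(\lambda+i|\epsilon|)A^{*}\|$. Since \eqref{eq:ssmooest} bounds this operator norm by $2a$, one obtains $|(R(\lambda+i\epsilon)A^{*}v,A^{*}v)_{\mathcal H}|\le 2a\|v\|_{\mathcal H_{1}}^{2}$, giving supersmoothness with constant $2a$.

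The main obstacle I expect is the unboundedness of $A$ and $A^{*}$: one has to check carefully that the Duhamel integral $\int_{0}^{t}Ae^{-i(t-s)H}A^{*}h(s)\,ds$ is well defined as a Bochner integral for step functions $h$ with values in $D(A^{*})$, and that the Fourier--transform manipulations are justified in the vector--valued setting. This is dealt with by first restricting to step functions (so the integrals reduce to finite sums and the closed--graph theorem ensures that $AR(z)A^{*}$ acts as a bona fide bounded operator on $\mathcal H_{1}$), and then extending by density using the a priori bound obtained from the $L^{\infty}$ symbol estimate. The time decay factor $e^{-|\epsilon|t}$ is precisely what is needed to place everything in $L^{2}(\mathbb{R};\mathcal H_{1})$ and apply Plancherel.
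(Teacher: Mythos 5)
The paper itself does not prove Theorem \ref{the:2abs}: it is quoted from \cite{DAncona15-a}, and the argument there is essentially the one you propose, so your plan is correct in substance. Polarization turns the numerical--radius bound $|(R(z)A^{*}v,A^{*}v)_{\mathcal H}|\le a\|v\|_{\mathcal H_{1}}^{2}$ into the operator bound $\|AR(z)A^{*}\|_{\mathcal H_{1}\to\mathcal H_{1}}\le 2a$ (which is exactly where the factor $2$ in the conclusion comes from), and Plancherel in $t$ converts this uniform symbol bound into the weighted $L^{2}_{t}$ estimate; the converse is the same computation read backwards via the Laplace--transform representation of $R(\lambda\pm i\epsilon)$. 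Two details deserve more care than your sketch gives them. First, $D(A^{**})=D(A)$ because $A$ is closed (it is not $\overline{D(A)}$, which is all of $\mathcal H$); the boundedness of the polarized form on $D(A^{*})\times D(A^{*})$ is precisely what shows $R(z)A^{*}v\in D(A)$, so that $AR(z)A^{*}$ extends to a genuine bounded operator on $\mathcal H_{1}$ and not merely a form. Second, the ``usual change of sign when $t<0$'' conceals the only delicate bookkeeping: for $t<0$ the Duhamel term is an advanced (anti--causal) convolution whose symbol is the boundary value $R(\lambda-i|\epsilon|)$ from the opposite half--plane, and the exponential damping must point in the causal direction on each half--line for the truncated kernel to have a bounded Fourier transform (the weight in \eqref{eq:ssmooest} should be read as $e^{-|\epsilon t|}$; note the paper's footnote acknowledging typos in this very statement). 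Since the supersmoothness hypothesis controls both boundary values $R(\lambda\pm i\epsilon)$, the argument goes through once the two half--lines are treated separately.
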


The extension to the wave
and Klein--Gordon groups
is the following:

\begin{theorem}[\cite{DAncona15-a}]\label{the:waveP}
  Let $\nu\in \mathbb{R}$ with $H+\nu\ge0$ and let $P$
  be the orthogonal projection onto $\ker(H+\nu)^{\perp}$.
  Assume $A$ and $A(H+\nu)^{-\frac14}P$ are closed operators with
  dense domain from $\mathcal{H}$ to $\mathcal{H}_{1}$.
  \\
  (i) If $A$ is $H$-smooth with constant $a$, then
  $A(H+\nu)^{-\frac14}P$ is $\sqrt{H+\nu}$-smooth
  with constant $C=(\pi+3)a$.
  In particular, we have the estimate
  \begin{equation}\label{eq:sqsmoo2}
    \|Ae^{-it \sqrt{H+\nu}}v\|_{L^{2}\mathcal{H}_{1}}
    \le
    2C^{\frac12}\|(H+\nu)^{\frac14}v\|_{\mathcal{H}},
    \qquad
    \forall v\in D((H+\nu)^{\frac14}).
  \end{equation}
  (ii) If $A$ is $H$-supersmooth with constant $a$, then
  $A(H+\nu)^{-\frac14}P$ is $\sqrt{H+\nu}$-supersmooth
  with constant $C=(\pi+3)a$.
  In particular, we have the estimate
  \begin{equation}\label{eq:sqssmoo2}
    \textstyle
    \|\int_{0}^{t}Ae^{-i(t-s)\sqrt{H+\nu}}
      (H+\nu)^{-\frac12}PA^{*}h(s)ds\|
      _{L^{2}\mathcal{H}_{1}}
    \le
    C\|h\| _{L^{2}\mathcal{H}_{1}}
  \end{equation}
  for any step function 
  $h:\mathbb{R}\to D((H+\nu)^{-\frac14}PA^{*})$.
\end{theorem}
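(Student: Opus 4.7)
My approach is to derive the estimates \eqref{eq:sqsmoo2} and \eqref{eq:sqssmoo2} from Theorems \ref{the:1abs} and \ref{the:2abs} by first establishing that $B:=A(H+\nu)^{-1/4}P$ is $\sqrt{H+\nu}$-smooth (respectively supersmooth) with the stated constant $C=(\pi+3)a$. This reduction is essentially immediate: for (i), applying Theorem \ref{the:1abs} to $B$ in place of $A$ and setting $u=(H+\nu)^{1/4}v$ on $\mathrm{range}(P)$ allows the two fractional powers to cancel and yields \eqref{eq:sqsmoo2}; for (ii), the composition $BB^{*}=A(H+\nu)^{-1/2}PA^{*}$ is precisely the integral kernel appearing in the nonhomogeneous estimate \eqref{eq:ssmooest}, so Theorem \ref{the:2abs} applied to $B$ produces \eqref{eq:sqssmoo2} directly.

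The key tool to transfer the resolvent bound from $H$ to $T:=\sqrt{H+\nu}$ is the algebraic identity
\[
  R_T(z)P \;=\; (T+z)\,R_H(z^2-\nu)\,P, \qquad (T-z)(T+z) = H+\nu-z^{2},
\]
together with the spectral change of variable $\mu=\sigma^{2}$ relating the spectral measure $F_\sigma$ of $T$ to the spectral measure $\tilde E_\mu$ of $H+\nu$, with Jacobian $dF_\sigma=2\sigma\,d\tilde E_{\sigma^{2}}$. The two factors of $(H+\nu)^{-1/4}$ in $B$ contribute $\sigma^{-1}$ in spectral variables, which exactly cancels the Jacobian $2\sigma$, so that the bilinear form controlling (super)smoothness of $B$ at spectral parameter $\sigma$ becomes equivalent, up to a universal factor, to the corresponding bilinear form controlling (super)smoothness of $A$ at the shifted parameter $\lambda=\sigma^{2}-\nu$. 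Combining the identity with commutativity of the functional calculus, one rewrites
\[
  T^{-1}R_T(z)P \;=\; R_H(z^{2}-\nu)P \;+\; z\,T^{-1}R_H(z^{2}-\nu)P,
\]
and treats the second term either by iterating the identity or by a partial-fraction decomposition in which the apparent singularity at $z=0$ cancels explicitly, leaving a bounded remainder.

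The main difficulty I anticipate is twofold. First, to obtain the sharp constant $C=(\pi+3)a$ one cannot simply work with pointwise spectral densities (which need not exist) and must instead use the Poisson-kernel characterization of Kato smoothness; the factor $\pi+3$ arises as the best constant when combining the passage between the resolvent and density formulations with the algebraic manipulations above. Second, uniformity in $z=\tau+i\epsilon$ as $\tau$ varies over $\mathbb{R}$ and $\epsilon$ over a small interval about $0$ must be checked carefully, since the shifted parameter $z^{2}-\nu$ traces out a region in a strip about the spectrum of $H$ and the supersmoothness bound for $A$ is only available there. The projection $P$ is essential throughout: since $T$ vanishes on $\ker(H+\nu)$, the inverse powers $(H+\nu)^{-1/4}$ are only well-defined on $\mathrm{range}(P)$, and this is why \eqref{eq:sqsmoo2} is naturally restricted to that subspace. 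Once the (super)smoothness of $B$ is established, Theorems \ref{the:1abs}--\ref{the:2abs} yield the announced time-dependent estimates without further work.
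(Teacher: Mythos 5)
The paper does not actually prove Theorem \ref{the:waveP}: it is imported verbatim from \cite{DAncona15-a}, so there is no internal proof to compare against. Your outline does reproduce the strategy of that reference: the factorization $(T-z)(T+z)=H+\nu-z^{2}$ for $T=\sqrt{H+\nu}$, the identity $R_{T}(z)=(T+z)R_{H+\nu}(z^{2})$, the reduction of $(R_{T}(z)B^{*}v,B^{*}v)$ with $B=A(H+\nu)^{-1/4}P$ to $(T^{-1}R_{T}(z)PA^{*}v,A^{*}v)$ by commutativity of the functional calculus, the cancellation of the Jacobian $2\sigma$ against the $\sigma^{-1}$ produced by the two factors of $(H+\nu)^{-1/4}$, and the passage to \eqref{eq:sqsmoo2}--\eqref{eq:sqssmoo2} via Theorems \ref{the:1abs}--\ref{the:2abs}. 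All of these steps are correct.

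As written, however, this is a plan rather than a proof: the two points you flag as ``anticipated difficulties'' are exactly where the content of the theorem lies, and neither is carried out. For (i) the spectral substitution indeed gives $\frac{d}{ds}\|F_{s}B^{*}v\|^{2}=2\,\frac{d}{d\mu}\|\tilde E_{\mu}PA^{*}v\|^{2}\le\frac{2a}{\pi}\|v\|^{2}$, so smoothness of $B$ follows from Kato's density characterization; but for (ii) no bound on the spectral density can control the \emph{full} resolvent, since its real part is a Hilbert transform of that density. The term $z\,(T^{-1}R_{H+\nu}(z^{2})PA^{*}v,A^{*}v)$ must therefore be estimated directly through the kernel $\frac{z}{s(s^{2}-z^{2})}=\frac{1}{2s}\bigl(\frac{1}{s-z}-\frac{1}{s+z}\bigr)$, separating Poisson and conjugate--Poisson contributions and integrating them against the density bound $\frac{d}{d\mu}\|\tilde E_{\mu}A^{*}v\|^{2}\le\frac{a}{\pi}\|v\|^{2}$; this is precisely where the factor $\pi+3$ is produced, and it is the step your sketch defers. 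Two further points should be made explicit: the extension of $|(R_{H}(w)A^{*}v,A^{*}v)|\le a\|v\|^{2}$ from the strip $0<|\Im w|<\epsilon_{0}$ to the whole region swept by $w=z^{2}-\nu$ (e.g.\ by a Phragm\'en--Lindel\"of argument for the bounded holomorphic function $w\mapsto(R_{H}(w)A^{*}v,A^{*}v)$), and the fact that \eqref{eq:sqsmoo2} holds for \emph{all} $v\in D((H+\nu)^{1/4})$ rather than only on $\operatorname{range}(P)$, which uses that $H$-smoothness of $A$ forces $\operatorname{Ran}A^{*}\subset\mathcal{H}_{ac}(H)$ and hence $A(1-P)=0$.
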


We can now recast the resolvent estimates of 
Corollary \ref{cor:onehalfres} in the framework of the
Kato--Yajima theory:

\begin{corollary}[]\label{cor:Lsmoothing}
  Let $\rho(x)$ be an arbitrary function in $\ell^{2}L^{\infty}$.
  Assume the operator $L$ defined by \eqref{eq:opL} satisfies the
  assumptions of 
  either Theorem \ref{the:fullresest}
  or Corollary \ref{cor:equivass}. Then the operators
  \begin{equation*}
    \rho(x)|x|^{-1}
    \qquad\text{and}\qquad 
    \rho(x)|x|^{-1/2}|D|^{1/2}
  \end{equation*}
  are $L$--supersmooth (and hence $L$--smooth), 
  with a constant of the form
  $C\|\rho\|_{\ell^{2}L^{\infty}}^{2}$.
  If in addition $|x|\partial \rho\in \ell^{2}L^{\infty}$, 
  then the operator
  \begin{equation*}
    |D|^{1/2}\rho|x|^{-1/2}
  \end{equation*}
  is also $L$--supersmooth, with a constant
  $C\||\rho|+|x||\partial \rho|\|_{\ell^{2}L^{\infty}}^{2}$.
\end{corollary}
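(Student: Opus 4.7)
The corollary reduces to checking the supersmoothness estimate
\[
  |(R(z)A^{*}v,A^{*}v)_{L^{2}}|\lesssim \|v\|_{L^{2}}^{2},
  \qquad R(z)=(L-z)^{-1},
\]
uniformly for $z\in\overline{\mathbb{C}^{\pm}}$ with $|\Im z|\le 1$, for each of the three candidate operators $A$. The two inputs are the weighted resolvent bound \eqref{eq:onehalfres} of Corollary \ref{cor:onehalfres}/\ref{cor:equivass}, the $L^{2}$ duality $|(u,h)_{L^{2}}|\le\|u\|_{\dot Y}\|h\|_{\dot Y^{*}}$, and the weighted embedding already used in the introduction: for $\rho\in\ell^{2}L^{\infty}$,
\[
  \|\rho|x|^{-1/2}u\|_{L^{2}}\lesssim\|\rho\|_{\ell^{2}L^{\infty}}\|u\|_{\dot Y},
\]
whose $L^{2}$-adjoint reads $\|\rho|x|^{-1/2}w\|_{\dot Y^{*}}\lesssim\|\rho\|_{\ell^{2}L^{\infty}}\|w\|_{L^{2}}$.

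\textbf{First two operators.} For $A=\rho|x|^{-1}$, factor $A^{*}v=|x|^{-1/2}h$ with $h=|x|^{-1/2}\bar\rho v$; then $(R(z)A^{*}v,A^{*}v)=(|x|^{-1/2}R(z)|x|^{-1/2}h,h)$ and the second half of \eqref{eq:onehalfres} paired with the duality bound yields $\lesssim \|h\|_{\dot Y^{*}}^{2}\lesssim \|\rho\|_{\ell^{2}L^{\infty}}^{2}\|v\|_{L^{2}}^{2}$. For $A=\rho|x|^{-1/2}|D|^{1/2}$, self-adjointness of $|D|^{1/2}$ gives $A^{*}v=|D|^{1/2}w$ with $w=|x|^{-1/2}\bar\rho v$, and the first half of \eqref{eq:onehalfres} produces
\[
  (R(z)A^{*}v,A^{*}v)=(|D|^{1/2}R(z)|D|^{1/2}w,w)\lesssim\|w\|_{\dot Y^{*}}^{2}\lesssim\|\rho\|_{\ell^{2}L^{\infty}}^{2}\|v\|_{L^{2}}^{2}.
\]

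\textbf{Third operator.} For $A=|D|^{1/2}\rho|x|^{-1/2}$, the adjoint $A^{*}v=|x|^{-1/2}\bar\rho|D|^{1/2}v$ has $|D|^{1/2}$ on the \emph{wrong} side, and the naive pairing would require $\|\rho|D|^{1/2}v\|_{\dot Y^{*}}\lesssim\|v\|_{L^{2}}$, a Hardy-type estimate that actually fails. Instead I would commute $|D|^{1/2}$ past the weight, writing
\[
  A^{*}v=F-G,\qquad F:=|D|^{1/2}(|x|^{-1/2}\bar\rho v),\qquad G:=[|D|^{1/2},|x|^{-1/2}\bar\rho]v,
\]
so that $(R(z)A^{*}v,A^{*}v)=(R(z)F,F)-2\Re(R(z)F,G)+(R(z)G,G)$. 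The diagonal piece $(R(z)F,F)=(|D|^{1/2}R(z)|D|^{1/2}h,h)$ with $h=|x|^{-1/2}\bar\rho v$ is handled exactly as in the second case, giving the bound $\lesssim\|\rho\|_{\ell^{2}L^{\infty}}^{2}\|v\|_{L^{2}}^{2}$.

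\textbf{Main obstacle.} The two remaining terms force a good control of the commutator $G$, and this is where the extra hypothesis $|x|\partial\rho\in\ell^{2}L^{\infty}$ enters: with $\phi:=|x|^{-1/2}\bar\rho$ one has $|\partial\phi|\lesssim|x|^{-3/2}(|\rho|+|x||\partial\rho|)$, so that $|x|^{3/2}\partial\phi$ lies in $\ell^{2}L^{\infty}$ with precisely the constant claimed in the statement. Exploiting the elementary identity $|D|^{-1/2}[|D|^{1/2},\phi]=\phi-|D|^{-1/2}\phi\,|D|^{1/2}$, the plan is to prove the weighted commutator bound
\[
  \||D|^{-1/2}G\|_{\dot Y^{*}}\lesssim\||\rho|+|x||\partial\rho|\|_{\ell^{2}L^{\infty}}\|v\|_{L^{2}}
\]
via a Schur/kernel estimate on the resulting pseudodifferential composition, combined with the embedding $\|\rho|x|^{-1/2}\cdot\|_{L^{2}\to\dot Y^{*}}$ used for the first two operators. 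Then writing $R(z)G=R(z)|D|^{1/2}(|D|^{-1/2}G)$ and invoking once more the first half of \eqref{eq:onehalfres} closes both the cross term and $(R(z)G,G)$ with the advertised constant. Verifying this weighted commutator estimate is the step I expect to require the most care, and it is the only place where the derivative assumption on $\rho$ is really used.
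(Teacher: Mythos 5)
For the operators $\rho|x|^{-1}$ and $\rho|x|^{-1/2}|D|^{1/2}$ your argument coincides with the paper's: both pull the weight into the dual pairing, apply \eqref{eq:onehalfres}, and use $\|\rho|x|^{-1/2}\cdot\|_{L^{2}\to\dot Y^{*}}\lesssim\|\rho\|_{\ell^{2}L^{\infty}}$; the only cosmetic difference is that you work with the quadratic form $(R(z)A^{*}v,A^{*}v)$ while the paper records the equivalent weighted operator bound $\|\rho|x|^{-1}R(z)|x|^{-1}\rho\|_{L^{2}\to L^{2}}\lesssim\|\rho\|_{\ell^{2}L^{\infty}}^{2}$.

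For the third operator $|D|^{1/2}\rho|x|^{-1/2}$ your route genuinely diverges from the paper's, and it has a real gap. You split $A^{*}v=F-G$ and propose to control the commutator $G=[|D|^{1/2},|x|^{-1/2}\bar\rho]v$ by a Schur/kernel argument yielding $\||D|^{-1/2}G\|_{\dot Y^{*}}\lesssim\|\,|\rho|+|x||\partial\rho|\,\|_{\ell^{2}L^{\infty}}\|v\|_{L^{2}}$, but you explicitly leave this unproved. This is not a minor verification: it is a weighted commutator estimate in the inhomogeneous $\dot Y^{*}=\ell^{1}L^{2}$ norm against an $L^{2}$ input, and nothing in the paper's toolbox delivers it directly; indeed, carrying it out would require a careful dyadic kernel decomposition of $[|D|^{1/2},\phi]$ with $\phi=|x|^{-1/2}\bar\rho$, and even then the cross term $(R(z)F,G)$ needs a compatible bound on $R(z)G$. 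Until that estimate is supplied, your proof of the third claim is incomplete.

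The paper sidesteps the commutator entirely. Since $\||D|^{1+iy}w\|_{L^{2}}\simeq\|\partial w\|_{L^{2}}$ for all real $y$, it applies the Leibniz rule to $\partial(\rho|x|^{-1/2}R(z)f)$: the term $\rho|x|^{-1/2}\partial R(z)f$ is controlled by the $\dot Y$ bound on $\partial R(z)$, while the term $(\partial(\rho|x|^{-1/2}))R(z)f$ produces precisely the weight $|x|\partial\rho+\rho$ times $|x|^{-3/2}R(z)f$, which is bounded through $\||x|^{-1}R(z)f\|_{\dot Y}\le\|R(z)f\|_{\dot X}$. This gives uniform $L^{2}$ bounds for $|D|^{1+iy}\rho|x|^{-1/2}R(z)|x|^{-1/2}\rho$ and, by duality, for $\rho|x|^{-1/2}R(z)|x|^{-1/2}\rho|D|^{1+iy}$; Stein--Weiss analytic interpolation in $w$ for $T_{w}=|D|^{1-w}\rho|x|^{-1/2}R(z)|x|^{-1/2}\rho|D|^{w}$ then yields the $w=1/2$ bound with the advertised constant. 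This is where the derivative assumption $|x|\partial\rho\in\ell^{2}L^{\infty}$ is actually used, and it replaces the commutator estimate you would still have to prove. I would recommend adopting the paper's Leibniz-plus-interpolation argument here rather than trying to complete the commutator route.
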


\begin{proof}
  Note that
  \begin{equation*}
    \|\rho|x|^{-1}v\|_{L^{2}}
    \le
    \|\rho\|_{\ell^{2}L^{\infty}}
    \||x|^{-1}v\|_{\ell^{\infty}L^{2}}
    =
    \|\rho\|_{\ell^{2}L^{\infty}}
    \||x|^{-1/2}v\|_{\dot Y}.
  \end{equation*}
  Thus we have, by \eqref{eq:onehalfres},
  \begin{equation*}
  \begin{split}
    \|\rho|x|^{-1}R(z)|x|^{-1/2}f\|_{L^{2}}
    \lesssim
    \|\rho\|_{\ell^{2}L^{\infty}}
    \|f\|_{\dot Y^{*}}
    =&
    \|\rho\|_{\ell^{2}L^{\infty}}
    \|\rho \rho^{-1}|x|^{1/2}f\|_{\ell^{1}L^{2}}
    \\
    \le&
    \|\rho\|_{\ell^{2}L^{\infty}}^{2}
    \|\rho^{-1}|x|^{1/2}f\|_{L^{2}}
    \end{split}
  \end{equation*}
  which can be written
  \begin{equation*}
    \|\rho|x|^{-1}R(z)|x|^{-1}\rho g\|_{L^{2}}
    \lesssim
    \|\rho\|_{\ell^{2}L^{\infty}}^{2}
    \|g\|_{L^{2}}.
  \end{equation*}
  The proof for $\rho(x)|x|^{-1/2}|D|^{1/2}$ is similar.
  For the last operator, we first note that
  for any $y\in \mathbb{R}$
  \begin{equation*}
  \begin{split}
    \||D|^{1+iy}\rho|x|^{-1/2}R(z)
      &
      f\|_{L^{2}}
    \simeq
    \|\partial (\rho|x|^{-1/2}R(z)f)\|_{L^{2}}
    \\
    \le&
    \|\rho|x|^{-1/2}\partial R(z)f\|_{L^{2}}
    +
    \|(\partial (\rho|x|^{-1/2}))R(z)f)\|_{L^{2}}.
    \end{split}
  \end{equation*}
  The first term at the right can be bounded by the
  $\dot Y$ norm and hence by $\|f\|_{\dot Y^{*}}$
  thanks to \eqref{eq:resestim2}.
  The second term is bounded by
  \begin{equation*}
    \textstyle
    \|(|x|\partial \rho-\frac{\widehat{x}}{2}\rho)
    |x|^{-3/2}R(z)f\|_{L^{2}}
    \lesssim
    \||x||\partial \rho|+\rho\|
      _{\ell^{2}L^{\infty}}
    \||x|^{-1}R(z)f\|_{\dot Y}
  \end{equation*}
  and hence again by $\|f\|_{\dot Y^{*}}$, using the inequality
  $\||x|^{-1}v\|_{\dot Y}\le\|v\|_{\dot X}$ and again 
  \eqref{eq:resestim2}. In conclusion we have
  \begin{equation*}
    \||D|^{1+iy}\rho|x|^{-1/2}R(z) f\|_{L^{2}}
    \le
    C(\|\rho\|_{\ell^{2}L^{\infty}}+
      \||x|\partial \rho\|_{\ell^{2}L^{\infty}})
    \|f\|_{\dot Y^{*}}
  \end{equation*}
  which implies that the operator
  \begin{equation*}
    |D|^{1+iy}\rho|x|^{-1/2}R(z)|x|^{-1/2}\rho
  \end{equation*}
  is bounded on $L^{2}$, with a constant 
  $C(\|\rho+|x||\partial \rho|\|_{\ell^{2}L^{\infty}})
      \|\rho\|_{\ell^{2}L^{\infty}}$
  where $C$ is independent of $z$ or $y$. By duality the same
  holds for the operatpr
  \begin{equation*}
    \rho|x|^{-1/2}R(z)|x|^{-1/2}\rho|D|^{1+iy}.
  \end{equation*}
  Hence we can apply Stein--Weiss interpolation to the
  analytic family of operators
  \begin{equation*}
    T_{w}=
    |D|^{1-w}\rho|x|^{-1/2}R(z)|x|^{-1/2}\rho|D|^{w}
  \end{equation*}
  with $w$ in the complex strip $0\le\Re w\le1$, as in the
  proof of Corollary \ref{cor:onehalfres}.
  Taking $w=1/2$ we conclude the proof.
\end{proof}

Then applying the abstract theory we obtain immediately:

\begin{corollary}[Smoothing for Schr\"{o}dinger]\label{cor:smoosch}
  Under the assumptions of either Theorem \ref{the:fullresest}
  or Corollary \ref{cor:equivass}, we have
  for any $\rho>0$ in $\ell^{2}L^{\infty}$
  \begin{equation}\label{eq:smooschh}
    \|\rho|x|^{-1}e^{itL}f\|_{L^{2}_{t}L^{2}}
    +
    \|\rho|x|^{-1/2}|D|^{1/2}e^{itL}f\|_{L^{2}_{t}L^{2}}
    \le
    C\|\rho\|_{\ell^{2}L^{\infty}}\|f\|_{L^{2}}
  \end{equation}
  \begin{equation}\label{eq:smooschnh1}
    \textstyle
    \left\|
      \rho|x|^{-1}\int_{0}^{t}e^{i(t-t')L}F(t')dt'\right\|
        _{L^{2}_{t}L^{2}}
      \le
      C \|\rho\|_{\ell^{2}L^{\infty}}^{2}
      \|\rho^{-1}|x|F\|_{L^{2}_{t}L^{2}}
  \end{equation}
  \begin{equation}\label{eq:smooschnh2}
    \textstyle
    \left\|
      \rho|x|^{-1/2}|D|^{1/2}\int_{0}^{t}e^{i(t-t')L}F(t')dt'\right\|
        _{L^{2}_{t}L^{2}}
      \le
      C \|\rho\|_{\ell^{2}L^{\infty}}^{2}
      \|\rho^{-1}|x|^{1/2}|D|^{-1/2}F\|_{L^{2}_{t}L^{2}}
  \end{equation}
  with a constant $C$ independent of $\rho$.

  If in addition $|x|\partial \rho\in \ell^{2}L^{\infty}$,
  then the previous estimates hold with the operator
  $\rho|x|^{-1/2}|D|^{1/2}$ replaced by
  $|D|^{1/2}\rho|x|^{-1/2}$ in \eqref{eq:smooschh}
  and \eqref{eq:smooschnh2}, the operator
  $\rho^{-1}|x|^{1/2}|D|^{-1/2}$ replaced by
  $|D|^{-1/2}\rho|x|^{1/2}$ in the last term in 
  \eqref{eq:smooschnh2}, and the norm
  $\|\rho\|_{\ell^{2}L^{\infty}}$ replaced by
  $\|\rho+|x||\partial \rho|\|_{\ell^{2}L^{\infty}}$.
  In particular we have
  \begin{equation}\label{eq:smoosch3}
    \||D|^{1/2}\rho|x|^{-1/2}e^{itL}f\|_{L^{2}_{t}L^{2}}
    \lesssim\|f\|_{L^{2}}.
  \end{equation}
\end{corollary}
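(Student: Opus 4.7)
The proof is a direct application of the abstract Kato--Yajima smoothing theory (Theorems \ref{the:1abs} and \ref{the:2abs}) to the operators whose $L$--supersmoothness was established in Corollary \ref{cor:Lsmoothing}. No genuinely new analytic input is needed; the main task is to translate the abstract estimates via the right choice of adjoint.

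For the homogeneous bound \eqref{eq:smooschh}, set $A_1:=\rho(x)|x|^{-1}$ and $A_2:=\rho(x)|x|^{-1/2}|D|^{1/2}$. By Corollary \ref{cor:Lsmoothing} both are $L$--supersmooth with constants $a_i\lesssim \|\rho\|_{\ell^{2}L^{\infty}}^{2}$, hence in particular $L$--smooth (with the same constants) in the sense of Definition \ref{def:smooop}. Theorem \ref{the:1abs} then gives $\|A_i e^{itL}f\|_{L^{2}_{t}L^{2}}\le 2a_i^{1/2}\|f\|_{L^{2}}\lesssim \|\rho\|_{\ell^{2}L^{\infty}}\|f\|_{L^{2}}$ for $i=1,2$, and summing these two bounds yields \eqref{eq:smooschh}. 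The sign of $t$ is immaterial as $L$ is selfadjoint (apply Theorem \ref{the:1abs} to $-L$ if one prefers).

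For the inhomogeneous bounds \eqref{eq:smooschnh1} and \eqref{eq:smooschnh2} I apply Theorem \ref{the:2abs} with $A=A_i$, writing the forcing in the form $F=A_i^{*}h$ so that the abstract inequality \eqref{eq:ssmooest} becomes exactly the claimed estimate. For $i=1$, $A_1^{*}=A_1$, so one chooses $h=\rho^{-1}|x|F$, and the bound $\|h\|_{L^{2}_{t}L^{2}}=\|\rho^{-1}|x|F\|_{L^{2}_{t}L^{2}}$ delivers \eqref{eq:smooschnh1}. For $i=2$, $A_2^{*}=|D|^{1/2}|x|^{-1/2}\rho$, so $h=\rho^{-1}|x|^{1/2}|D|^{-1/2}F$ and \eqref{eq:smooschnh2} follows. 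The exponential factor $e^{-|\epsilon|t}$ in \eqref{eq:ssmooest} is harmless: by Fatou's lemma and the uniformity of the right-hand side in $\epsilon$, one sends $\epsilon\downarrow 0$ to obtain the un-weighted estimate.

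Finally, the last part of the statement, involving the operator $|D|^{1/2}\rho|x|^{-1/2}$ under the extra assumption $|x|\partial\rho\in \ell^{2}L^{\infty}$, is handled identically by taking $A_3:=|D|^{1/2}\rho|x|^{-1/2}$; Corollary \ref{cor:Lsmoothing} states precisely that $A_3$ is $L$--supersmooth with constant $C\||\rho|+|x||\partial\rho|\|_{\ell^{2}L^{\infty}}^{2}$. Theorem \ref{the:1abs} applied to $A_3$ yields \eqref{eq:smoosch3} and the modified \eqref{eq:smooschh}; Theorem \ref{the:2abs}, applied with $A_3^{*}=|x|^{-1/2}\rho|D|^{1/2}$ and the corresponding $h$, produces the modified \eqref{eq:smooschnh2}. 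The main (mild) obstacle in this entire program is purely bookkeeping: one must carefully track the adjoints $A_i^{*}$ and the passage $\epsilon\downarrow 0$ in Theorem \ref{the:2abs}, since all the hard analytic work---Muckenhoupt weights, Stein--Weiss interpolation, uniform resolvent bounds---has already been carried out in Section \ref{sec:the_full_reso_esti} and in Corollary \ref{cor:Lsmoothing}.
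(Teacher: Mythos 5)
Your proposal is correct and follows exactly the route the paper intends: the paper gives no written proof beyond the phrase ``applying the abstract theory we obtain immediately,'' and your argument simply fills in that application of Theorems \ref{the:1abs} and \ref{the:2abs} to the $L$--supersmooth operators of Corollary \ref{cor:Lsmoothing}, with the adjoint bookkeeping done correctly.
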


Before stating the corresponding estimates for the wave
equation we prove a simple bound for the powers of the
operator $L$:

\begin{lemma}[]\label{lem:sobolevL}
  Assume $-L=(\partial+iA)^{2}-V$ is selfadjoint and non negative
  in $L^{2}(\mathbb{R}^{n})$, $n\ge3$, and that 
  $|V|+|A|^{2}\lesssim|x|^{-2}$. Then for all $0\le s\le 1$
  we have
  \begin{equation}\label{eq:boundsobL}
    \|L^{s/2}v\|_{L^{2}}\lesssim
    \||D|^{s}v\|_{L^{2}},
    \qquad
    \||D|^{-s}v\|_{L^{2}}\lesssim
    \|L^{-s/2}v\|_{L^{2}}.
  \end{equation}
\end{lemma}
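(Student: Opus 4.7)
The plan is to first establish the endpoint $s=1$ through a direct quadratic-form computation, then upgrade to all $0\le s\le 1$ via L\"owner--Heinz monotonicity of fractional operator powers, and finally obtain the reverse inequality by duality.

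For the endpoint $s=1$, for $v\in C^\infty_c(\mathbb{R}^n)$ we compute
\begin{equation*}
  \|L^{1/2}v\|_{L^2}^2=(Lv,v)=\|\partial^A v\|_{L^2}^2+\int V|v|^2\,dx.
\end{equation*}
Expanding $|\partial^A v|\le|\partial v|+|A||v|$ and using the pointwise hypotheses $|A|\lesssim|x|^{-1}$ and $|V|\lesssim|x|^{-2}$, we get
\begin{equation*}
  (Lv,v)\lesssim \|\partial v\|_{L^2}^2+\||x|^{-1}v\|_{L^2}^2.
\end{equation*}
The Hardy inequality (valid since $n\ge3$) bounds the second term by $\|\partial v\|_{L^2}^2=\||D|v\|_{L^2}^2$, producing the quadratic-form comparison $L\le C|D|^2$ on $\dot H^1$ after the standard extension by density.

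Since $L$ and $|D|^2$ are both non-negative selfadjoint operators on $L^2$ and $L\le C|D|^2$ as quadratic forms, the L\"owner--Heinz theorem applied to the operator-monotone functions $t\mapsto t^s$, $0\le s\le 1$, yields
\begin{equation*}
  L^s\le C^s |D|^{2s},\qquad 0\le s\le 1,
\end{equation*}
and pairing with $v$ gives $\|L^{s/2}v\|_{L^2}\lesssim\||D|^s v\|_{L^2}$, the first asserted bound. The second bound is then immediate by duality: the first says that $L^{s/2}|D|^{-s}$ extends to a bounded operator on $L^2$, hence so does its adjoint $|D|^{-s}L^{s/2}$ (both factors being selfadjoint); writing $g=L^{s/2}u$ converts this into $\||D|^{-s}g\|_{L^2}\lesssim \|L^{-s/2}g\|_{L^2}$, with $L^{-s/2}$ interpreted via the spectral theorem on $(\ker L)^\perp$.

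There is no substantive obstacle in this argument: the only care needed is in the bookkeeping of domains when passing from $C^\infty_c$ to the form domain of $L$, and in defining $L^{-s/2}$ via spectral calculus so that both sides of the second inequality refer to the same dense subspace. Once the quadratic-form comparison $L\le C|D|^2$ is established using Hardy, the remaining L\"owner--Heinz and duality steps are entirely standard and proceed without friction.
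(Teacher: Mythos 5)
Your proof is correct and follows essentially the same route as the paper: the endpoint $s=1$ is obtained from the quadratic form $(Lv,v)$ together with Hardy's inequality, the intermediate exponents follow from a fractional-power comparison, and the second estimate is deduced by duality. The only (cosmetic) difference is that you invoke L\"owner--Heinz to pass from $s=1$ to $0\le s\le1$, whereas the paper simply interpolates with the trivial case $s=0$; these are interchangeable ways of making the same step rigorous.
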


\begin{proof}
  The second estimate is equivalent to the first one by duality.
  It is sufficient to prove the first estimate for $s=1$ and then
  interpolate with the trivial case $s=0$.
  When $s=1$ we have
  \begin{equation*}
    \|L^{1/2}v\|_{L^{2}}
    =
    (Lv,v)
    \lesssim
    \|\partial v\|^{2}_{L^{2}}+\||x|^{-1}v\|_{L^{2}}^{2}
    \lesssim\||D|v\|_{L^{2}}^{2}
  \end{equation*}
  by Hardy's inequality.
\end{proof}

For the wave flow $e^{it \sqrt{L}}$ and the Klein--Gordon
flow $e^{it \sqrt{L+1}}$ we have then:

\begin{corollary}[Smoothing for Wave--K--G]\label{cor:smooWE}
  Under the assumptions of either Theorem \ref{the:fullresest}
  or Corollary \ref{cor:equivass}, we have
  for any $\rho>0$ in $\ell^{2}L^{\infty}$
  \begin{equation}\label{eq:smooWEh}
    \|\rho|x|^{-1}e^{it\sqrt{L}}f\|_{L^{2}_{t}L^{2}}
    +
    \|\rho|x|^{-1/2}|D|^{1/2}e^{it \sqrt{L}}f\|_{L^{2}_{t}L^{2}}
    \le
    C\|\rho\|_{\ell^{2}L^{\infty}}\|L^{1/4}f\|_{L^{2}}
  \end{equation}
  where the last term can be estimated by
  $C'\|\rho\|_{\ell^{2}L^{\infty}}
    \|f\|_{\dot H^{1/2}}$,
  \begin{equation}\label{eq:smooWEnh1}
    \textstyle
    \left\|
      \rho|x|^{-1}\int_{0}^{t}
        \frac{e^{i(t-t')\sqrt{L}}}{\sqrt{L}}
        F(t')dt'\right\|
        _{L^{2}_{t}L^{2}}
      \le
      C \|\rho\|_{\ell^{2}L^{\infty}}^{2}
      \|\rho^{-1}|x|F\|_{L^{2}_{t}L^{2}}
  \end{equation}
  \begin{equation}\label{eq:smooWEnh2}
    \textstyle
    \left\|
      \rho|x|^{-1/2}|D|^{1/2}\int_{0}^{t}
        \frac{e^{i(t-t')\sqrt{L}}}{\sqrt{L}}
        F(t')dt'\right\|
        _{L^{2}_{t}L^{2}}
      \le
      C \|\rho\|_{\ell^{2}L^{\infty}}^{2}
      \|\rho^{-1}|x|^{1/2}|D|^{-1/2}F\|_{L^{2}_{t}L^{2}}
  \end{equation}
  with constants $C,C'$ independent of $\rho$.

  If in addition $|x|\partial \rho\in \ell^{2}L^{\infty}$,
  then the previous estimates hold with the operator
  $\rho|x|^{-1/2}|D|^{1/2}$ replaced by
  $|D|^{1/2}\rho|x|^{-1/2}$ in 
  \eqref{eq:smooWEh} and \eqref{eq:smooWEnh2}, 
  $\rho^{-1}|x|^{1/2}|D|^{-1/2}$ replaced by
  $|D|^{-1/2}\rho|x|^{1/2}$ in the last term in 
  \eqref{eq:smooWEnh2}, and the norm
  $\|\rho\|_{\ell^{2}L^{\infty}}$ replaced by
  $\|\rho+|x||\partial \rho|\|_{\ell^{2}L^{\infty}}$.
  In particular we have
  \begin{equation}\label{eq:smooWE3}
    \||D|^{1/2}\rho|x|^{-1/2}e^{it \sqrt{L}}f\|_{L^{2}_{t}L^{2}}
    \lesssim\|L^{1/4}f\|_{L^{2}}.
  \end{equation}

  The same estimates hold if we replace $L$ by $L+1$
  everywhere; in this case the last term in \eqref{eq:smooWEh}
  must be estimated by
  $C'\|\rho\|_{\ell^{2}L^{\infty}}
    \|f\|_{H^{1/2}}$
  (nonhomogeneous norm).
\end{corollary}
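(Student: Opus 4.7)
The plan is to combine the $L$-supersmoothness of the operators $\rho|x|^{-1}$, $\rho|x|^{-1/2}|D|^{1/2}$ (and, under the extra condition on $\partial\rho$, also $|D|^{1/2}\rho|x|^{-1/2}$) established in Corollary \ref{cor:Lsmoothing} with the abstract transfer Theorem \ref{the:waveP}, which converts $H$-smoothness into $\sqrt{H+\nu}$-smoothness. Since $L\ge0$ and $0$ is not a resonance, $0$ is not an $L^{2}$ eigenvalue of $L$ (Definition \ref{def:reson}), so $\ker L=\{0\}$ and the orthogonal projection $P$ in Theorem \ref{the:waveP} equals the identity, removing a technical annoyance from the outset; the same holds for $L+1$.

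For the homogeneous wave estimate \eqref{eq:smooWEh} I would apply Theorem \ref{the:waveP}(i) with $H=L$, $\nu=0$, first to $A=\rho|x|^{-1}$ and then to $A=\rho|x|^{-1/2}|D|^{1/2}$, using the smoothing constants provided by Corollary \ref{cor:Lsmoothing}. This directly yields
\[
  \|A e^{it\sqrt{L}}f\|_{L^{2}_{t}L^{2}}
  \lesssim
  \|\rho\|_{\ell^{2}L^{\infty}}\|L^{1/4}f\|_{L^{2}},
\]
and the bound $\|L^{1/4}f\|_{L^{2}}\lesssim\|f\|_{\dot H^{1/2}}$ is Lemma \ref{lem:sobolevL} with $s=1/2$. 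The enhanced version \eqref{eq:smooWE3} follows by the same argument applied to $A=|D|^{1/2}\rho|x|^{-1/2}$, which is $L$-supersmooth under the additional assumption $|x|\partial\rho\in\ell^{2}L^{\infty}$.

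For the inhomogeneous estimates I would apply Theorem \ref{the:waveP}(ii) to the same choices of $A$, yielding the abstract bilinear estimate
\[
  \Big\|\int_{0}^{t}A e^{-i(t-s)\sqrt{L}}L^{-1/2}A^{*}h(s)\,ds\Big\|_{L^{2}_{t}L^{2}}
  \lesssim\|h\|_{L^{2}_{t}L^{2}}.
\]
The stated form then follows by the substitution $F=A^{*}h$ together with the identification of $A^{*}$ in each case: for $A=\rho|x|^{-1}$ (self-adjoint in $L^{2}$) one has $A^{*}=A$, so $h=\rho^{-1}|x|F$ and the right-hand side is $\|\rho^{-1}|x|F\|_{L^{2}_{t}L^{2}}$, which is \eqref{eq:smooWEnh1}. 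For $A=\rho|x|^{-1/2}|D|^{1/2}$ the adjoint is $A^{*}=|D|^{1/2}|x|^{-1/2}\rho$, whence $h=\rho^{-1}|x|^{1/2}|D|^{-1/2}F$, giving \eqref{eq:smooWEnh2}; the symmetric choice $A=|D|^{1/2}\rho|x|^{-1/2}$ produces the swapped variant.

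For Klein--Gordon the argument is identical after setting $\nu=1$ in Theorem \ref{the:waveP}, so that $\sqrt{H+\nu}=\sqrt{L+1}$. The only modification is on the right-hand side of the homogeneous estimate, where $\|L^{1/4}f\|_{L^{2}}$ is replaced by $\|(L+1)^{1/4}f\|_{L^{2}}$; using the spectral equivalence $(\lambda+1)^{1/2}\simeq\lambda^{1/2}+1$ together with Lemma \ref{lem:sobolevL} we get
\[
  \|(L+1)^{1/4}f\|_{L^{2}}^{2}
  \simeq\|L^{1/4}f\|_{L^{2}}^{2}+\|f\|_{L^{2}}^{2}
  \lesssim\||D|^{1/2}f\|_{L^{2}}^{2}+\|f\|_{L^{2}}^{2}
  \simeq\|f\|_{H^{1/2}}^{2},
\]
which yields the nonhomogeneous Sobolev norm on the right; the inhomogeneous Klein--Gordon estimates transfer verbatim. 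The main obstacle is essentially bookkeeping: one must track correctly the adjoint of each composite operator $A$ and the substitution that turns the abstract $A^{*}h$ input into the prescribed $F$ on the right of each estimate. No analytical input beyond Corollary \ref{cor:Lsmoothing}, Theorem \ref{the:waveP} and Lemma \ref{lem:sobolevL} is needed.
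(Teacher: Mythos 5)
Your proposal is correct and follows essentially the same route as the paper: the supersmoothing bounds of Corollary \ref{cor:Lsmoothing} are transferred to the flows $e^{it\sqrt{L}}$ and $e^{it\sqrt{L+1}}$ via Theorem \ref{the:waveP} (i.e.\ \eqref{eq:sqsmoo2} and \eqref{eq:sqssmoo2}), and the right-hand side is converted to Sobolev norms by Lemma \ref{lem:sobolevL} with $s=1/2$. Your extra bookkeeping (the projection $P$, the identification of $A^{*}$, and the spectral equivalence $\|(L+1)^{1/4}f\|_{L^{2}}^{2}\simeq\|L^{1/4}f\|_{L^{2}}^{2}+\|f\|_{L^{2}}^{2}$) only makes explicit what the paper leaves implicit.
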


\begin{proof}
  Estimate \eqref{eq:smooWEh} follows from 
  Corollary \ref{cor:Lsmoothing}, \eqref{eq:sqsmoo2}
  and \eqref{eq:boundsobL} with $s=1/2$.
  Estimates \eqref{eq:smooWEnh1}, \eqref{eq:smooWEnh2} are a direct
  application of \eqref{eq:sqssmoo2}.
  The other claims are proved in a similar way.
\end{proof}

We note that the previous smoothing estimates can be
put into a scale invariant (but equivalent) form.
Indeed, one has the equivalence
\begin{equation}\label{eq:equivdY}
  \|v\|_{\dot Y}=
  \sup_{\|\rho\|_{\ell^{\infty}L^{2}}=1}
  \|\rho|x|^{-1/2}v\|_{L^{2}}
\end{equation}
which is obtained choosing $\rho=\one{C_{j}}$
with $C_{j}=\{x \colon 2^{j}\le|x|<2^{j+1}\}$
and taking the supremum over $j\in \mathbb{Z}$.
Thus we obtain the follwing result:

\begin{corollary}[]\label{cor:smooswap}
  Under the assumptions of either Theorem \ref{the:fullresest}
  or Corollary \ref{cor:equivass}, we have
  \begin{equation}\label{eq:smooschhsw}
    \||x|^{-1/2}e^{itL}f\|_{\dot YL^{2}_{t}}
    +
    \||D|^{1/2}e^{itL}f\|_{\dot YL^{2}_{t}}
    \le
    C\|f\|_{L^{2}},
  \end{equation}
  \begin{equation}\label{eq:smooWEhsw}
    \||x|^{-1/2}e^{it\sqrt{L}}f\|_{\dot YL^{2}_{t}}
    +
    \||D|^{1/2}e^{it \sqrt{L}}f\|_{\dot YL^{2}_{t}}
    \le
    C\|f\|_{\dot H^{1/2}},
  \end{equation}
  \begin{equation}\label{eq:smooKGhsw}
    \||x|^{-1/2}e^{it\sqrt{L+1}}f\|_{\dot YL^{2}_{t}}
    +
    \||D|^{1/2}e^{it \sqrt{L+1}}f\|_{\dot YL^{2}_{t}}
    \le
    C\|f\|_{H^{1/2}}.
  \end{equation}
\end{corollary}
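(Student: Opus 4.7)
The plan is to extract the scale-invariant estimates from the weighted estimates \eqref{eq:smooschh} of Corollary \ref{cor:smoosch} and \eqref{eq:smooWEh} of Corollary \ref{cor:smooWE} by specializing the weight $\rho$ to a suitable $R$-parameter family and taking a supremum in $R$.

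First I would unpack the mixed norm $\dot YL^{2}_{t}$: for a function $u(t,x)$ with values in $\mathbb{C}$, viewed as an $L^{2}_{t}$-valued function of $x$,
\begin{equation*}
  \textstyle
  \|u\|_{\dot YL^{2}_{t}}^{2}
  =
  \sup_{R>0}\frac{1}{R}
  \int_{|x|\le R}\int_{\mathbb{R}}|u(t,x)|^{2}\,dt\,dx.
\end{equation*}
Taking $u=|x|^{-1/2}e^{itL}f$, the square of the first term in \eqref{eq:smooschhsw} is
\begin{equation*}
  \sup_{R>0}\,\bigl\|\rho_{R}(x)\,|x|^{-1}\,e^{itL}f\bigr\|_{L^{2}_{t}L^{2}}^{2},
  \qquad
  \rho_{R}(x):=R^{-1/2}|x|^{1/2}\one{|x|\le R},
\end{equation*}
and analogously, with $u=|D|^{1/2}e^{itL}f$, the square of the second term equals
\begin{equation*}
  \sup_{R>0}\,\bigl\|\rho_{R}(x)\,|x|^{-1/2}\,|D|^{1/2}\,e^{itL}f\bigr\|_{L^{2}_{t}L^{2}}^{2}
\end{equation*}
with the \emph{same} weight $\rho_{R}$. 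A direct dyadic computation shows that $\|\rho_{R}\|_{\ell^{2}L^{\infty}}^{2}\lesssim R^{-1}\sum_{2^{j}\le R}2^{j}\lesssim 1$ uniformly in $R>0$. Applying estimate \eqref{eq:smooschh} with $\rho=\rho_{R}$ then gives
\begin{equation*}
  \bigl\|\rho_{R}|x|^{-1}e^{itL}f\bigr\|_{L^{2}_{t}L^{2}}
  +
  \bigl\|\rho_{R}|x|^{-1/2}|D|^{1/2}e^{itL}f\bigr\|_{L^{2}_{t}L^{2}}
  \lesssim
  \|\rho_{R}\|_{\ell^{2}L^{\infty}}\|f\|_{L^{2}}
  \lesssim
  \|f\|_{L^{2}},
\end{equation*}
and taking the supremum in $R$ yields \eqref{eq:smooschhsw}.

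For \eqref{eq:smooWEhsw} and \eqref{eq:smooKGhsw} the argument is identical, substituting \eqref{eq:smooWEh} of Corollary \ref{cor:smooWE} (resp.~its $L+1$ counterpart) in place of \eqref{eq:smooschh}. The right-hand side initially appears as $\|L^{1/4}f\|_{L^{2}}$ or $\|(L+1)^{1/4}f\|_{L^{2}}$; Lemma \ref{lem:sobolevL} (which applies since the bounds $|V|+|A|^{2}\lesssim|x|^{-2}$ are built into \eqref{eq:assVA}) converts these into $\|f\|_{\dot H^{1/2}}$ and $\|f\|_{H^{1/2}}$ respectively. The main (and essentially only) technical point is the uniform bound $\|\rho_{R}\|_{\ell^{2}L^{\infty}}\lesssim 1$; there is no real obstacle, since the entire content of the corollary is simply the observation that the scale-invariant norm $\dot YL^{2}_{t}$ can be realized as a supremum of the weighted $L^{2}_{t}L^{2}$ estimates already proved, with a weight whose $\ell^{2}L^{\infty}$ norm is controlled independently of the scale $R$.
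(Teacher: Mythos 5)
Your proof is correct and follows essentially the same route as the paper: the corollary is obtained by realizing the $\dot Y L^{2}_{t}$ norm as a supremum of weighted $L^{2}_{t}L^{2}$ norms with weights of uniformly bounded $\ell^{2}L^{\infty}$ norm, and then invoking Corollaries \ref{cor:smoosch} and \ref{cor:smooWE} together with Lemma \ref{lem:sobolevL}. The only (cosmetic) difference is that you use the continuous family $\rho_{R}=R^{-1/2}|x|^{1/2}\one{|x|\le R}$, which reproduces the $\dot Y$ norm exactly, whereas the paper's remark \eqref{eq:equivdY} uses the dyadic indicators $\rho=\one{C_{j}}$ and an equivalence of norms.
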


\section{Strichartz estimates}\label{sec:stri_esti}

We first prove a simple extension 
to Lorentz spaces of the
Muckenhoupt--Wheeden weighted fractional integration
estimate. In the course of the proof of Strichartz
estimates we shall actually need only 
\eqref{eq:muwleb} and \eqref{eq:commL2a},
but we included the next two Lemmas to give a simple
alternative proof of the 
H\"{o}rmander--Plancherel identities in the Appendix
of \cite{ErdoganGoldbergSchlag09-a}, which were crucial
to their result.

\begin{lemma}[Weighted Sobolev embedding]\label{lem:muckwhee}
  For all $1<p\le q<\infty$
  the following inequality holds:
  \begin{equation}\label{eq:muwleb}
    \|v g\|_{L^{q}}\leq C\|v|D|^{\alpha} g\|_{L^{p}},
    \qquad
    \frac{\alpha}{n}=\frac{1}{p}-\frac{1}{q}
  \end{equation}
  for all weights $v\in A_{2-\frac1p}\cap RH_{q}$
  or, equivalently, such that $v^{q}\in A_{1+\frac{q}{p'}}$.
  More generally, for any $r\in[1,\infty]$ and $p,q,\alpha$
  as above we have the inequality in weeighted Lorentz norms
  \begin{equation}\label{eq:muwlor}
    \|v g\|_{L^{q,r}}\leq C\|v |D|^{\alpha} g\|_{L^{p,r}}
  \end{equation}
  provided the weight $v$ satisfies
  $v^{q+\epsilon}\in A_{1+\frac{q}{p'}-\epsilon}$
  for some $\epsilon>0$.
\end{lemma}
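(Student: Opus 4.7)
The plan is to reduce the inequality to the classical Muckenhoupt--Wheeden two--weight estimate for the Riesz potential, and then upgrade to the Lorentz version by real interpolation using the openness of the $A_{p,q}$ weight classes.

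First I would rewrite \eqref{eq:muwleb} by setting $f=|D|^{\alpha}g$, so that $g=c_{n,\alpha}I_{\alpha}f$ where $I_{\alpha}$ denotes the Riesz potential of order $\alpha$. Then \eqref{eq:muwleb} is equivalent to
\begin{equation*}
  \|v\cdot I_{\alpha}f\|_{L^{q}}\le C\|v f\|_{L^{p}},
\end{equation*}
which is precisely the two--weight Muckenhoupt--Wheeden fractional integration estimate with weight pair $(v^{p},v^{q})$. The classical theorem (see e.g.\ Theorem 9.2.3 in \cite{Grafakos08-a}) states that this is equivalent to the $A(p,q)$ condition
\begin{equation*}
  \sup_{Q}
  \Bigl(\textstyle\fint_{Q}v^{q}\Bigr)^{1/q}
  \Bigl(\textstyle\fint_{Q}v^{-p'}\Bigr)^{1/p'}<\infty,
\end{equation*}
which in turn is equivalent to $v^{q}\in A_{1+q/p'}$ and to $v\in A_{2-1/p}\cap RH_{q}$ by standard factorization results (see \cite{JohnsonNeugebauer91-a}). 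This establishes \eqref{eq:muwleb}.

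For the Lorentz refinement I would exploit the openness of the $A_{p,q}$ classes. The hypothesis $v^{q+\epsilon}\in A_{1+q/p'-\epsilon}$ for some $\epsilon>0$ means that $v$ lies strictly in the interior of the relevant Muckenhoupt region. Consequently there exist two pairs $(p_{0},q_{0})$ and $(p_{1},q_{1})$ with $p_{0}<p<p_{1}$, $q_{0}<q<q_{1}$, and $1/p_{i}-1/q_{i}=\alpha/n$ for $i=0,1$, such that $v\in A(p_{i},q_{i})$. For these pairs the strong estimate already proved gives
\begin{equation*}
  \|v g\|_{L^{q_{i}}}\le C\|v|D|^{\alpha}g\|_{L^{p_{i}}},
  \qquad i=0,1.
\end{equation*}
Viewing the inequality as the boundedness of the linear operator $Tg:=v\cdot I_{\alpha}(f\mapsto f/v)$ acting from $L^{p_{i}}$ to $L^{q_{i}}$, I would apply real interpolation (Theorem 5.3.1 of \cite{BerghLofstrom76-a}): since $(L^{p_{0}},L^{p_{1}})_{\theta,r}=L^{p,r}$ and $(L^{q_{0}},L^{q_{1}})_{\theta,r}=L^{q,r}$ for the appropriate $\theta\in(0,1)$ recovering the original $(p,q)$, we obtain the weighted Lorentz bound
\begin{equation*}
  \|v g\|_{L^{q,r}}\le C\|v|D|^{\alpha}g\|_{L^{p,r}}
\end{equation*}
for every $r\in[1,\infty]$, which is \eqref{eq:muwlor}.

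The main technical point that one should verify carefully is the openness step: one needs that the assumption $v^{q+\epsilon}\in A_{1+q/p'-\epsilon}$ really does translate into $v\in A(p_{i},q_{i})$ for pairs on both sides of $(p,q)$ along the line $1/p-1/q=\alpha/n$. This is a routine but slightly delicate computation using the self--improving (reverse Hölder) property of Muckenhoupt weights; everything else is a direct invocation of classical theorems.
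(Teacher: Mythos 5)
Your proposal is correct and takes essentially the same route as the paper: \eqref{eq:muwleb} is reduced to the Muckenhoupt--Wheeden two--weight fractional integration estimate, and \eqref{eq:muwlor} is then obtained by real interpolation between nearby exponent pairs $(p_{\pm},q_{\pm})$ on the line $\frac1p-\frac1q=\frac{\alpha}{n}$, with the strengthened hypothesis $v^{q+\epsilon}\in A_{1+\frac{q}{p'}-\epsilon}$ guaranteeing the weight condition at both perturbed pairs via the inclusion/monotonicity properties of the $A_{p}$ classes. The only cosmetic difference is that the paper writes down the explicit formulas $1/p_{\pm}=1/p\mp\delta$, $1/q_{\pm}=1/q\mp\delta$ rather than merely asserting the existence of such pairs; you may also note that the ``openness'' you invoke at the end is here supplied directly by the stronger hypothesis and does not require the reverse H\"{o}lder self--improvement theorem.
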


\begin{proof}
  Estimate \eqref{eq:muwleb} for $v\in A_{2-\frac1p}\cap RH_{q}$
  is due to Muckenhoupt and Wheeden \cite{MuckenhouptWheeden74-a}
  (see also \cite{AuscherMartell08-a}). The equivalent
  condition on the weight is easy to check, see e.g.
  \cite{JohnsonNeugebauer91-a}.
  To prove the last statement, fix $\delta>0$ sufficiently small
  and write
  \begin{equation*}
    \textstyle
    p_{+}=\frac{p}{1-\delta p},
    \qquad
    p_{-}=\frac{p}{1+\delta p}
    \quad\implies\quad
    \frac{1}{p_{\pm}}=\frac1p\mp \delta
    \qquad
    p_{-}<p<p_{+}
  \end{equation*}
  and similarly for $q_{\pm}$.
  Then \eqref{eq:muwleb} holds for the couples
  $(p_{+},q_{+})$ and $(p_{-},q_{-})$ with $\alpha$ unchanged,
  and hence by real interpolation we get \eqref{eq:muwlor},
  provided the weight $v$ satisfies
  \begin{equation*}
    v^{q_{\pm}}\in A_{1+\frac{q_{\pm}}{(p_{\pm})'}},
  \end{equation*}
  which are implied by 
  $w^{q_{+}}\in A_{1+\frac{q_{-}}{(p_{-})'}}$ thanks to
  the inclusion properties of $A_{p}$ classes.
\end{proof}

\begin{lemma}[]\label{lem:L2bddcomm}
  Let $\sigma\in L^{1}_{loc}(\mathbb{R}^{n})$ be such that
  $\sigma^{2}\in A_{2}$ and
  $|\partial \sigma^{-1}|\lesssim \sigma^{-1}|x|^{-1}$.
  Then the following operator is bounded on $L^{2}$:
  \begin{equation}\label{eq:commL2a}
    |D|^{-1/2}\sigma^{-1}|D|^{1/2}\sigma.
  \end{equation}
  If in addition
  $\sigma^{-n-\epsilon}\in A_{1+\frac{n}{n-2}-\epsilon}$
  for some $\epsilon>0$ then the following operator is also
  bounded on $L^{2}$:
  \begin{equation}\label{eq:commL2}
    |D|^{1/2}\sigma^{-1}|D|^{-1/2}\sigma.
  \end{equation}
\end{lemma}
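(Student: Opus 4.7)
My plan is to reduce both estimates to commutator bounds and combine Stein's analytic interpolation with the Muckenhoupt $A_{2}$ theory. For the first bound \eqref{eq:commL2a}, I start from the identity
\begin{equation*}
|D|^{-1/2}\sigma^{-1}|D|^{1/2}\sigma = I + |D|^{-1/2}\bigl[\sigma^{-1},|D|^{1/2}\bigr]\sigma
\end{equation*}
and study the analytic family $T_{w} := |D|^{-w}\sigma^{-1}|D|^{w}\sigma$ in the strip $0\leq \Re w\leq 1$. On $\Re w = 0$, the operators $|D|^{\pm iy}$ are classical Calder\'on--Zygmund multipliers of polynomial growth in $|y|$, and the equivalence $\sigma^{2}\in A_{2}\iff\sigma^{-2}\in A_{2}$, together with the Muckenhoupt weighted theorem, makes $|D|^{iy}$ bounded on $L^{2}(\sigma^{-2})$. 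Conjugating by the isometry $\sigma\colon L^{2}\to L^{2}(\sigma^{-2})$ then gives $\|T_{iy}\|_{L^{2}\to L^{2}}\lesssim (1+|y|)^{N}$.

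On $\Re w = 1$ I decompose $T_{1+iy} = I + |D|^{-1-iy}[\sigma^{-1},|D|^{1+iy}]\sigma$ and use $|D|^{1+iy}= |D|\cdot |D|^{iy}$ to reduce matters, modulo $|D|^{iy}$ factors handled by the previous step, to the $L^{2}$-boundedness of $|D|^{-1}[\sigma^{-1},|D|]\sigma$. Since $[\sigma^{-1},|D|]$ has, to leading order, kernel controlled by $|\partial\sigma^{-1}|$ times a Riesz-type factor, the hypothesis $|\partial\sigma^{-1}|\lesssim \sigma^{-1}|x|^{-1}$ turns the composed operator into (a multiple of) $|D|^{-1}$ applied to $f/|x|$, which is controlled in $L^{2}$ by the dual Hardy inequality $\|f/|x|\|_{\dot H^{-1}}\lesssim \|f\|_{L^{2}}$. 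Stein's interpolation theorem at $w = 1/2$ then produces \eqref{eq:commL2a}.

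For \eqref{eq:commL2} the configuration is essentially a weighted Sobolev embedding. Writing $|D|^{1/2}(\sigma^{-1}u) = \sigma^{-1}|D|^{1/2}u + [|D|^{1/2},\sigma^{-1}]u$, the second boundedness reduces to
\begin{equation*}
\bigl\|[|D|^{1/2},\sigma^{-1}]u\bigr\|_{L^{2}}\lesssim \bigl\|\sigma^{-1}|D|^{1/2}u\bigr\|_{L^{2}}.
\end{equation*}
The principal symbol of the commutator is of order $-1/2$ and, after substituting the bound on $\partial\sigma^{-1}$, this becomes a two-weight Stein--Weiss inequality to which the Lorentz-weighted Muckenhoupt--Wheeden estimate \eqref{eq:muwlor} directly applies. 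Bookkeeping of exponents with weight $v = \sigma^{-1}$, $p = 2$, and the Riesz potential of order $1$ shows that the Muckenhoupt condition produced by this application is exactly $\sigma^{-n-\epsilon}\in A_{1+n/(n-2)-\epsilon}$, matching the supplementary hypothesis of the lemma.

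The main obstacle will be the endpoint $\Re w = 1$ in the interpolation for \eqref{eq:commL2a}: the commutator $|D|^{-1}[\sigma^{-1},|D|]\sigma$ must be tamed using only the pointwise gradient bound and $\sigma^{2}\in A_{2}$, since a direct application of Coifman--Rochberg--Weiss would require $\sigma^{-1}\in \mathrm{BMO}$, which is not assumed. The remedy is to exploit the scale-invariant structure $|\partial\sigma^{-1}|\lesssim \sigma^{-1}/|x|$, producing a Hardy-type factor that aligns with the $A_{2}$ weighted theory and closes the loop.
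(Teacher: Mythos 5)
Your overall strategy---Stein analytic interpolation on the strip, $A_{2}$-weighted bounds for the imaginary powers $|D|^{iy}$ at $\Re w=0$, a Hardy-type mechanism at $\Re w=1$, and the Muckenhoupt--Wheeden embedding \eqref{eq:muwlor} to account for the extra hypothesis on the second operator---is exactly the paper's. The gap is at both $\Re w=1$ endpoints, which is where all the content of the lemma sits, and in each case the mechanism you invoke is not available for a weight that is merely $A_{2}$ with $|\partial\sigma^{-1}|\lesssim\sigma^{-1}|x|^{-1}$. For \eqref{eq:commL2a} you reduce to $|D|^{-1}[\sigma^{-1},|D|]\sigma$ and assert that $[\sigma^{-1},|D|]$ has kernel controlled by $|\partial\sigma^{-1}|$ times a Riesz-type factor: that is a Calder\'on-commutator statement which would require $\nabla\sigma^{-1}\in L^{\infty}$ (or a weighted substitute you would have to prove), and your intermediate reduction also strands a factor $|D|^{iy}\sigma$ acting first, which is not bounded on $L^{2}$ and is not covered by the $\Re w=0$ step. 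The paper avoids the commutator entirely: it passes to the adjoint family $U_{w}=\sigma|D|^{w}\sigma^{-1}$ and proves $U_{1+iy}\colon\dot H^{1}\to L^{2}$ by writing $|D|=\sum c_{j}R_{j}\partial_{j}$, using that each $R_{j}$ (and $|D|^{iy}$) is bounded on $L^{2}(\sigma^{2}dx)$ since $\sigma^{2}\in A_{2}$, and then applying the product rule, the pointwise bound on $\partial\sigma^{-1}$, and Hardy's inequality. No commutator theorem is needed.

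For \eqref{eq:commL2}, the step $\|[|D|^{1/2},\sigma^{-1}]u\|_{L^{2}}\lesssim\|\sigma^{-1}|D|^{1/2}u\|_{L^{2}}$ justified by ``the principal symbol of the commutator is of order $-1/2$'' is not legitimate: $\sigma^{-1}$ is a rough $A_{2}$-type weight with no symbol calculus. You did locate correctly where the hypothesis $\sigma^{-n-\epsilon}\in A_{1+\frac{n}{n-2}-\epsilon}$ comes from (the $\alpha=1$, $p=r=2$, $q=\frac{2n}{n-2}$ instance of \eqref{eq:muwlor}), but in the paper it enters through a second analytic interpolation of the family $|D|^{z}\sigma^{-1}|D|^{-z}\sigma$: at $\Re z=1$ one again trades $|D|$ for $\partial$ in $L^{2}(\sigma^{-2}dx)$, and the Leibniz term $\||x|^{-1}\sigma^{-1}|D|^{-1-iy}\sigma f\|_{L^{2}}$ is the one controlled by H\"older against $|x|^{-1}\in L^{n,\infty}$ followed by \eqref{eq:muwlor}. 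If you replace your two commutator assertions by these two endpoint computations, the argument closes; as written, the crux of the lemma is asserted rather than proved.
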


\begin{proof}
  We prove \eqref{eq:commL2} first.
  Consider the analytic family of operators
  \begin{equation*}
    U_{z}=|D|^{z}\sigma|D|^{-z}\sigma^{-1},
    \qquad
    z\in \mathbb{C},
    \qquad
    0\le\Im z\le1.
  \end{equation*}
  For $z=iy$, $y\in \mathbb{R}$, we have
  \begin{equation*}
    \|U_{iy}f\|_{L^{2}}
    =\|\sigma|D|^{-iy}\sigma^{-1}f\|_{L^{2}}
    \lesssim
    \|\sigma \sigma^{-1}f\|_{L^{2}}=\|f\|_{L^{2}}
  \end{equation*}
  where we used the well--known
  fact that $|D|^{iy}$ is bounded
  in weighted $L^{2}$ if the weight is in $A_{2}$; note also
  that the implicit constant grows at most polynomially
  in $y$ (actually $\lesssim|y|^{n/2}$, see e.g.
  \cite{CacciafestaDAncona12-a}). On the other hand,
  for $z=1+iy$, $y\in \mathbb{R}$ we can write
  \begin{equation*}
    \|U_{1+iy}f\|_{L^{2}}
    \simeq
    \|\partial(\sigma^{-1}|D|^{-1-iy}\sigma f)\|_{L^{2}}
    \le
    \||x|^{-1}\sigma^{-1}|D|^{-1-iy}\sigma f\|_{L^{2}}
    +
    \|\sigma^{-1}\partial|D|^{-1-iy}\sigma f\|_{L^{2}}.
  \end{equation*}
  Since $\sigma^{-2}\in A_{2}$, we have
  \begin{equation*}
    \|\sigma^{-1}\partial|D|^{-1-iy}\sigma f\|_{L^{2}}
    \simeq
    \|\sigma^{-1}|D||D|^{-1-iy}\sigma f\|_{L^{2}}
    \simeq
    \|f\|_{L^{2}}.
  \end{equation*}
  On the other hand,
  \begin{equation*}
    \||x|^{-1}\sigma^{-1}|D|^{-1-iy}\sigma f\|_{L^{2}}
    \lesssim
    \|\sigma^{-1}|D|^{-1-iy}\sigma f\|_{L^{\frac{2n}{n-2},2}}
    \lesssim
    \|\sigma^{-1}|D|^{-iy}f\|_{L^{2}}
    \simeq\|f\|_{L^{2}}
  \end{equation*}
  using \eqref{eq:muwlor} with the choice
  $q=\frac{2n}{n-2}$, $p=r=2$ and $\alpha=1$, provided
  $\sigma^{-n-\epsilon}\in A_{1+\frac{n}{n-2}-\epsilon}$.
  By Stein--Weiss interpolation we obtain that
  $U_{z}$ is bounded in $L^{2}$ for all values in the strip,
  and this gives the claim taking $z=1/2$.

  Consider now the operator \eqref{eq:commL2a},
  or equivalently its adjoint,
  which we denote also by
  \begin{equation*}
    T=\sigma|D|^{1/2}\sigma^{-1}|D|^{-1/2}.
  \end{equation*}
  To prove that $T$ is bounded on $L^{2}$, we use the analytic
  family of operators
  \begin{equation*}
    U_{w}=\sigma|D|^{1/2}\sigma^{-1}
  \end{equation*}
  for $w$ in the complex strip $0\le \Re w\le1$.
  The operator
  \begin{equation*}
    U_{iy}=\sigma|D|^{iy}\sigma^{-1}
  \end{equation*}
  for $y\in \mathbb{R}$ is bounded on $L^{2}$ with a
  growth at most polynomial in $|y|$ as $|y|\to \infty$,
  provided $\sigma\in A_{2}$.
  On the otner hand, for $w=1+iy$ we can write
  \begin{equation*}
    \|U_{1+iy}f\|_{L^{2}}
    =
    \|\sigma|D|^{iy}|D|\sigma^{-1}f\|_{L^{2}}
    \le
    \|\sigma \partial (\sigma^{-1}f)\|_{L^{2}}
  \end{equation*}
  and if we have the property 
  \begin{equation}\label{eq:assw4}
    |\partial \sigma^{-1}|\lesssim \sigma^{-1}|x|^{-1}
  \end{equation}
  we can continue
  \begin{equation*}
    \|U_{1+iy}f\|_{L^{2}}
    \lesssim
    \||x|^{-1}f\|_{L^{2}}+\|\partial f\|_{L^{2}}
    \lesssim
    \|\partial f\|_{L^{2}}
    \simeq
    \|f\|_{\dot H^{1}}
  \end{equation*}
  by Hardy's inequality; again, the implicit constant grows
  at most polynomially in $y$. By Stein--Weiss complex
  interpolation we deduce the estimate
  \begin{equation*}
    \|U_{1/2}f\|_{L^{2}}
    =
    \|\sigma|D|^{1/2}\sigma^{-1}f\|_{L^{2}}
    \lesssim\|f\|_{\dot H^{1/2}}
  \end{equation*}
  which implies
  \begin{equation*}
    \|Tf\|_{L^{2}}
    =
    \|\sigma|D|^{1/2}\sigma^{-1}|D|^{-1/2}f\|_{L^{2}}
    \lesssim\|f\|_{L^{2}}.
    \qedhere
  \end{equation*}
\end{proof}

To handle endpoint Strichartz estimates
we resort to a
mixed endpoint Strichartz--smoothing estimate
for the free flow, due to
Ionescu and Kenig \cite{IonescuKenig05-a}:
\begin{equation}\label{eq:iok1}
  \textstyle
  \|\int_{0}^{t}e^{-i(t-t')\Delta}F(t')dt'\|_{L^{2}_{t}L^{2^{*}}}
  \lesssim
  \min_{j=1,\dots,n}
  \||D_{j}|^{-1/2}F\|_{L^{2}_{t}L_{x_{j}}^{1}L^{2}_{x'_{j}}},
  \qquad
  2^{*}=\frac{2n}{n-2}
\end{equation}
where the norm at the right are $L^{1}$ with respect to one of
the coordinates and $L^{2}$ with respect to the remaining 
coordinates. By an easy modification of the argument in
\cite{IonescuKenig05-a}, as observed by Mizutani \cite{Mizutani16-b}
one can refine \eqref{eq:iok1} to an estimate in the Lorentz
norm $L^{2^{*},2}$
\begin{equation}\label{eq:iok2}
  \textstyle
  \|\int_{0}^{t}e^{-i(t-t')\Delta}F(t')dt'\|_{L^{2}_{t}L^{2^{*},2}}
  \lesssim
  \min_{j=1,\dots,n}
  \||D_{j}|^{-1/2}F\|_{L^{2}_{t}L_{x_{j}}^{1}L^{2}_{x'_{j}}};
\end{equation}
moreover, if $w>0$ is such that $w^{2}\in A_{2}(\mathbb{R}^{n})$
and there exists $C$ such that
$\int w^{-2}dx_{j}<C$ for $j=1,\dots,n$
(uniformly in the remaining variables),
then we have
\begin{equation*}
  \textstyle
  \sum_{j=1}^{n}
  \||D_{j}|^{-1/2}F\|_{L^{2}_{t}L_{x_{j}}^{1}L^{2}_{x'_{j}}}
  \lesssim
  \sum_{j=1}^{n}
  \|w|D_{j}|^{-1/2}F\|_{L^{2}_{t}L^{2}}
  \simeq
  \|w|D|^{-1/2}F\|_{L^{2}_{t}L^{2}}
\end{equation*}
by the usual weighted estimates for singular integrals.
Thus \eqref{eq:iok1} implies the estimate
\begin{equation}\label{eq:iok}
  \textstyle
  \|\int_{0}^{t}e^{-i(t-t')\Delta}F(t')dt'\|_{L^{2}_{t}L^{2^{*},2}}
  \lesssim
  \|w|D|^{-1/2}F\|_{L^{2}_{t}L^{2}}
\end{equation}
for any weight $w$ as above.

\begin{theorem}[]\label{the:endpointsch}
  Let $n\ge3$.
  Assume the operator $L$ defined in \eqref{eq:opL} is selfadjoint
  and non negative in $L^{2}(\mathbb{R}^{n})$, with domain
  $H^{2}(\mathbb{R}^{n})$.
  Assume $V:\mathbb{R}^{n}\to \mathbb{R}$
  and $A:\mathbb{R}^{n}\to \mathbb{R}^{n}$ satisfy
  for some $\delta>0$ and $\mu>1$
  \begin{equation}\label{eq:assVAendp}
    \bra{\log|x|}^{\mu}\bra{x}^{\delta}
    |x|^{2}(V-i \partial \cdot A)\in L^{\infty},
    \qquad
    \bra{\log|x|}^{\mu}\bra{x}^{\delta}|x| \widehat{B}\in L^{\infty}
  \end{equation}
  and
  \begin{equation}\label{eq:assAendp}
    \bra{\log|x|}^{\mu}\bra{x}^{\delta}|x|A
    \in L^{\infty}\cap \dot H^{1/2}_{2n}.
  \end{equation}
  Moreover, assume 0 is not a resonance for $L$,
  in the sense of Definition \ref{def:reson}.

  Then the Schr\"{o}dinger flow $e^{itL}$ satisfies
  the endpoint Strichartz estimate
  \begin{equation}\label{eq:endpstrsch}
    \|e^{itL}f\|_{L^{2}_{t}L^{\frac{2n}{n-2},2}}
    \lesssim
    \|f\|_{L^{2}}
  \end{equation}
  and the corresponding estimates in $L^{p}_{t}L^{q}$
  for all Schr\"{o}dinger admissible $(p,q)$;
  and the nonhomogeneous estimates
  \begin{equation}\label{eq:endostrschnh}
    \textstyle
    \|\int_{0}^{t}e^{i(t-t')L}F(t')dt'\|_{L^{p}_{t}L^{q}}
    \lesssim
    \|F\|_{L^{\widetilde{p}'}L^{\widetilde{q}'}}
  \end{equation}
  for all Schr\"{o}dinger admissible couples 
  $(p,q)$ and $(\widetilde{p},\widetilde{q})$ such that
  $p<\widetilde{p}'$.
\end{theorem}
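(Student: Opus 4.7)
The plan is to prove the endpoint homogeneous estimate \eqref{eq:endpstrsch} by a Duhamel perturbation argument and then deduce the remaining homogeneous bounds by interpolation, and the nonhomogeneous ones by a $TT^{\ast}$ plus Christ--Kiselev step. Writing $L=-\Delta+P$ with
\[
Pv=(V+|A|^{2}-i\partial\!\cdot\!A)v-2iA\!\cdot\!\partial v,
\]
Duhamel gives
\[
e^{itL}f=e^{-it\Delta}f-i\int_{0}^{t}e^{-i(t-s)\Delta}Pe^{isL}f\,ds.
\]
The free term is bounded in $L^{2}_{t}L^{2^{\ast},2}$ by the Ionescu--Kenig--Mizutani endpoint estimate \eqref{eq:striendphom}. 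For the Duhamel integral, \eqref{eq:iok} reduces matters to proving
\[
\|w|D|^{-1/2}Pe^{isL}f\|_{L^{2}_{t,x}}\lesssim\|f\|_{L^{2}}
\]
for some admissible weight $w$ (with $w^{2}\in A_{2}$ and $\sup_{x'}\int w^{-2}dx_{j}<\infty$).

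The assumptions \eqref{eq:assVAendp}--\eqref{eq:assAendp} give the pointwise bounds $|V+|A|^{2}-i\partial\!\cdot\!A|\lesssim|x|^{-2}\bra{\log|x|}^{-\mu}$ and $|A|\lesssim|x|^{-1}\bra{\log|x|}^{-\mu}$, and since $\mu>1$ the cutoff $\rho:=\bra{\log|x|}^{-\mu/2}$ lies in $\ell^{2}L^{\infty}$, so it is admissible in the smoothing estimates of Corollary \ref{cor:smoosch}. Splitting $P$ into its zeroth-order and first-order parts, and rewriting $-2iA\!\cdot\!\partial v=-2i\,\partial\!\cdot\!(Av)+2i(\partial\!\cdot\!A)v$, one uses the identity $|D|^{-1/2}\partial_{j}=i\rie_{j}|D|^{1/2}$ with \eqref{eq:rieY} to convert everything into terms of the schematic form $\|w|D|^{\pm 1/2}(U\psi)\|_{L^{2}_{t,x}}$ with $\psi\in\{e^{isL}f,\;|D|^{1/2}e^{isL}f\}$ and $U$ a multiplication operator gathering factors $|x|^{-2},|x|^{-1},\bra{\log|x|}^{-\mu},A$. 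Commuting the half-derivative past the weight via Lemma \ref{lem:L2bddcomm} (applied with $\sigma=\rho|x|^{-1/2}$, which satisfies both $\sigma^{2}\in A_{2}$ and $\sigma^{-n-\epsilon}\in A_{1+n/(n-2)-\epsilon}$ thanks to $\mu>1$), and bounding the remaining multipliers by Muckenhoupt--Wheeden weighted fractional integration (Lemma \ref{lem:muckwhee}), one reduces each term to the product of an $L^{2}$-bounded multiplier with a smoothing norm controlled by $\|f\|_{L^{2}}$ through \eqref{eq:smoosch3} or \eqref{eq:smooschh}. The hypothesis $|x|\bra{\log|x|}^{\mu}A\in\dot H^{1/2}_{2n}$ from \eqref{eq:assAendp} enters precisely to control the commutator $[|D|^{1/2},A]$ arising in the first-order piece, via a Kato--Ponce / Coifman--Meyer type bound in Lorentz spaces.

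With \eqref{eq:endpstrsch} in hand, real interpolation with the energy identity $\|e^{itL}f\|_{L^{\infty}_{t}L^{2}}=\|f\|_{L^{2}}$ yields the $L^{p}_{t}L^{q}$ bounds for all Schr\"{o}dinger-admissible $(p,q)$. For the nonhomogeneous estimate \eqref{eq:endostrschnh}, the homogeneous estimate for $(p,q)$ together with its dual for $(\widetilde p,\widetilde q)$ gives, by $TT^{\ast}$, the non-retarded bilinear bound $\|\int_{\mathbb{R}}e^{i(t-s)L}F(s)ds\|_{L^{p}_{t}L^{q}}\lesssim\|F\|_{L^{\widetilde p'}_{t}L^{\widetilde q'}}$, and the Christ--Kiselev lemma promotes this to the retarded integral $\int_{0}^{t}$ under the strict condition $p<\widetilde p'$. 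The hardest point is the weighted harmonic analysis of the second paragraph: choosing $w,\rho,\sigma$ so that the $A_{2}$ and line-integrability conditions of \eqref{eq:iok}, the differential and dual-$A_{p}$ conditions of Lemma \ref{lem:L2bddcomm}, and the $\ell^{2}L^{\infty}$ summability required by the smoothing estimates are \emph{simultaneously} satisfied. It is precisely this balancing that forces the logarithmic strength $\mu>1$ and the Sobolev regularity $\dot H^{1/2}_{2n}$ of $A$ that appear in \eqref{eq:assVAendp}--\eqref{eq:assAendp}.
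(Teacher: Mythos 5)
Your proposal is correct and follows essentially the same route as the paper: Duhamel perturbation of the free flow, the Ionescu--Kenig--Mizutani mixed endpoint estimate \eqref{eq:iok}, conversion of the first-order term to divergence form so $|D|^{-1/2}\partial$ becomes a Riesz transform times $|D|^{1/2}$, commuting the weight past $|D|^{1/2}$ via Lemma \ref{lem:L2bddcomm}, Kato--Ponce plus Lemma \ref{lem:muckwhee} to close against the smoothing estimates \eqref{eq:smooschh}, \eqref{eq:smoosch3}, and then interpolation with mass conservation and Christ--Kiselev. The paper works with $\sigma=\rho^{-1}|x|^{1/2}$ rather than your $\rho|x|^{-1/2}$, but since the $A_{2}$ and pointwise-derivative hypotheses of Lemma \ref{lem:L2bddcomm} are symmetric under $\sigma\leftrightarrow\sigma^{-1}$ for weights of this form, that is only a notational difference.
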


\begin{proof}
  Since the assumptions of Corollary \ref{cor:equivass}
  are satisfied, the smoothing estimates
  \eqref{eq:smooschh} and \eqref{eq:smoosch3}
  are valid.
  The flow $u=e^{itL}f$ is the solution of
  \begin{equation*}
    u(0)=f,
    \qquad
    i\partial_{t}u+Lu
    \equiv
    i\partial_{t}u-\Delta_{A}u-Vu=0.
  \end{equation*}
  By Duhamel's formula we can represent $u$ in the form
  \begin{equation}\label{eq:expansionsch}
    \textstyle
    u=e^{-it \Delta}f
    -i\int_{0}^{t}e^{-i(t-t')\Delta}
    \Bigl[2i\partial \cdot(A  u)
    +(V-i \partial \cdot A-|A |^{2})u\Bigr]dt'.
  \end{equation}
  We compute the $L^{2}_{t}L^{2^{*},2}$ norm of $u$.
  To the first term in \eqref{eq:expansionsch} we apply
  \eqref{eq:striendphom}.
  For the remaining terms we use \eqref{eq:iok} and we are
  led to estimate
  \begin{equation*}
    \textstyle
    I=\|\sigma|D|^{-1/2}\widetilde{V}u\| _{L^{2}_{t}L^{2}},
    \qquad
    \widetilde{V}:=V-i \partial \cdot A-|A |^{2},
  \end{equation*}
  \begin{equation*}
    \textstyle
    II=
    \|\sigma|D|^{-1/2}
    \partial \cdot (A  u)\|
    _{L^{2}_{t}L^{2}}
    \simeq
    \|\sigma|D|^{1/2} (A  u)\|
    _{L^{2}_{t}L^{2}}
  \end{equation*}
  where $\sigma$ is any weight on $\mathbb{R}^{n}$ such that
  \begin{equation}\label{eq:assw}
    \textstyle
    \sigma^{2}\in A_{2},
    \qquad
    \int \sigma^{-2}dx_{j}<\infty
    \qquad
    j=1,\dots,n.
  \end{equation}

  The quantity $I$ can be estimated via the weighted
  Sobolev embeddings \eqref{eq:muwleb}:
  with the choices $\alpha=1/2$, 
  $p=\frac{2n}{n+1}$ and $q=r=2$ we obtain
  \begin{equation}\label{eq:muw2}
    \|\sigma u\|_{L^{2}}
    \lesssim
    \|\sigma|D|^{1/2}u\|_{L^{\frac{2n}{n+1},2}}
  \end{equation}
  provided 
  \begin{equation}\label{eq:assw2}
    \sigma^{2+\epsilon} \in A_{2-\frac{1}{n}-\epsilon}
  \end{equation}
  for some $\epsilon>0$ small.
  Then we have, by H\"{o}lder inequaity,
  \begin{equation*}
    I \lesssim
    \|\sigma\widetilde{V}u\|
    _{L^{2}_{t}L^{\frac{2n}{n+1},2}}
    \le
    \|\sigma\rho^{-1}|x|\widetilde{V}\|
    _{L^{2n,\infty}}
    \|\rho|x|^{-1}u\|_{L^{2}_{t}L^{2}}
    \lesssim
    \|\sigma\rho^{-1}|x|\widetilde{V}\|_{L^{2n,\infty}}
    \|f\|_{L^{2}};
  \end{equation*}
  in the last step we used the smoothing estimate
  \eqref{eq:smooschh}. 

  To estimate the quantity $II$, we first commute
  the multiplication operator 
  $\sigma$ with $|D|^{1/2}$. This is possible since the
  operator
  \begin{equation*}
    T=\sigma|D|^{1/2}\sigma^{-1}|D|^{-1/2}
  \end{equation*}
  is bounded in $L^{2}$ by
  Lemma \ref{lem:L2bddcomm}, provided
  \begin{equation}\label{eq:assw3}
    |\partial \sigma^{-1}|\lesssim \sigma^{-1}|x|^{-1}.
  \end{equation}
  Thus we have,
  for any $\rho\in \ell^{2}L^{\infty}$,
  \begin{equation*}
    II \lesssim
    \||D|^{1/2}\sigma(A u)\|
    _{L^{2}_{t}L^{2}}
    =
    \||D|^{1/2}(\sigma \rho^{-1}|x|^{1/2} A)
    (\rho|x|^{-1/2} u)\|_{L^{2}_{t}L^{2}}
  \end{equation*}
  and using the Kato--Ponce inequality
  \begin{equation*}
    \lesssim
    \|\sigma \rho^{-1}|x|^{1/2} A\|_{L^{\infty}}
    \||D|^{1/2}\rho|x|^{-1/2} u\|_{L^{2}_{t}L^{2}}
    +
    \||D|^{1/2}\sigma \rho^{-1}|x|^{1/2} A\|_{L^{2n}}
    \|\rho|x|^{-1/2} u\|_{L^{2}_{t}L^{\frac{2n}{n-1}}}.
  \end{equation*}
  We use Sobolev embedding for the last term, and 
  then smoothing estimate
  \eqref{eq:smoosch3}, and we arrive at
  \begin{equation*}
    II \lesssim
    \|\sigma \rho^{-1}|x|^{1/2} A\|
    _{L^{\infty}\cap \dot H^{1/2}_{2n}}
    \|f\|_{L^{2}}.
  \end{equation*}

  Summing up, we have proved
  \begin{equation*}
    \|e^{itL}f\|_{L^{p}L^{q}}
    \lesssim
    (1+
    \|\sigma \rho^{-1}|x|^{1/2} A\|
    _{L^{\infty}\cap \dot H^{1/2}_{2n}}
    +
    \|\sigma\rho^{-1}|x|\widetilde{V}\|_{L^{2n,\infty}}
    )
    \cdot
    \|f\|_{L^{2}}.
  \end{equation*}
  If we now add the condition
  \begin{equation}\label{eq:assw5}
    \rho \lesssim \sigma|x|^{-1/2}
  \end{equation}
  then we can write
  \begin{equation*}
    \|\sigma \rho^{-1}|x|A^{2}\|_{L^{2n,\infty}}
    \lesssim\|\sigma \rho^{-1}|x|^{3/2}A^{2}\|_{L^{\infty}}
    \lesssim
    \|\sigma \rho ^{-1}|x|^{1/2}A\|_{L^{\infty}}^{2}
  \end{equation*}
  and the previous estimate simplifies to
  \begin{equation*}
    \|e^{itL}f\|_{L^{p}L^{q}}
    \lesssim
    (1+
    \|\sigma \rho^{-1}|x|^{1/2} A\|^{2}
    _{L^{\infty}\cap \dot H^{1/2}_{2n}}
    +
    \|\sigma\rho^{-1}|x|(V-i \partial \cdot A)\|_{L^{2n,\infty}}
    )
    \cdot
    \|f\|_{L^{2}}.
  \end{equation*}

  If we choose
  \begin{equation*}
    \sigma=\rho^{-1}|x|^{1/2},
    \qquad
    \rho=\bra{\log|x|}^{-\nu},
    \qquad
    \nu>1/2
  \end{equation*}
  we see that $\rho\in \ell^{2}L^{\infty}$,
  and
  \eqref{eq:assw}, \eqref{eq:assw2} (provided $\epsilon$
  is small enough), \eqref{eq:assw3}
  and \eqref{eq:assw5} are satisfied,
  as it follows by direct inspection using the
  basic properties of Muckenhoupt classes
  (see e.g. \cite{JohnsonNeugebauer91-a}). 
  Keeping into account the assumptions on the coefficients
  \eqref{eq:assVAendp}--\eqref{eq:assAendp},
  we have proved \eqref{eq:endpstrsch}. 

  The full range of
  indices $(p,q)$ is obtained immediately by interpolation 
  with the conservation of $L^{2}$ mass, and
  the nonhomogeneous estimate \eqref{eq:endostrschnh} 
  is proved by a standard
  application of the $TT^{*}$ method and the Christ--Kiselev
  Lemma, which is possible as long as $\widetilde{p}'<p$.
\end{proof}

By a gauge transformation we obtain the following
slightly more general result:

\begin{corollary}[]\label{cor:strschgauge}
  Let $n\ge3$ and $\phi:\mathbb{R}^{n}\to \mathbb{R}$
  such that $\partial \phi\in L^{n,\infty}$.
  Assume the operator $L$ defined in \eqref{eq:opL} is selfadjoint
  and non negative in $L^{2}(\mathbb{R}^{n})$, with domain
  $H^{2}(\mathbb{R}^{n})$.
  Assume $V:\mathbb{R}^{n}\to \mathbb{R}$
  and $A:\mathbb{R}^{n}\to \mathbb{R}^{n}$ satisfy
  for some $\delta>0$ and $\mu>1$
  \begin{equation*}
    \bra{\log|x|}^{\mu}\bra{x}^{\delta}
    |x|^{2}(V-i \partial \cdot A-i \Delta \phi)\in L^{\infty},
    \qquad
    \bra{\log|x|}^{\mu}\bra{x}^{\delta}|x| \widehat{B}\in L^{\infty}
  \end{equation*}
  and
  \begin{equation*}
    \bra{\log|x|}^{\mu}\bra{x}^{\delta}|x|(A+\partial \phi)
    \in L^{\infty}\cap \dot H^{1/2}_{2n}.
  \end{equation*}
  Moreover, assume 0 is not a resonance for $L$,
  in the sense of Definition \ref{def:reson}.

  Then the conclusion of Theorem \ref{the:endpointsch}
  are still valid for the Schr\"{o}dinger flow $e^{itL}$.
\end{corollary}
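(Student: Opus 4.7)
The natural strategy is to reduce directly to Theorem \ref{the:endpointsch} via the gauge transformation of Remark \ref{rem:gauge}. Setting $\widetilde{A}=A+\partial\phi$ and $\widetilde{L}=-\Delta_{\widetilde{A}}+V$, the identity \eqref{eq:gaugefor} yields
\begin{equation*}
    L(e^{i\phi}v)=e^{i\phi}\widetilde{L}v,
    \qquad\text{i.e.}\qquad
    e^{itL}f=e^{i\phi}\,e^{it\widetilde{L}}(e^{-i\phi}f).
\end{equation*}
Since multiplication by $e^{i\phi}$ is an isometry of every $L^p$ (and hence of every Lorentz space $L^{p,q}$), the desired endpoint bound for $e^{itL}$ will follow immediately from the same bound for $e^{it\widetilde{L}}$, applied to $e^{-i\phi}f\in L^2$.

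The first step of the plan is therefore to verify that $\widetilde{L}$ satisfies all the hypotheses of Theorem \ref{the:endpointsch}. The conditions on the potentials transfer cleanly: the assumption on $|x|\bra{x}^{\delta}\bra{\log|x|}^{\mu}(A+\partial\phi)$ is exactly \eqref{eq:assAendp} for $\widetilde{A}$, and since $V-i\partial\cdot\widetilde{A}=V-i\partial\cdot A-i\Delta\phi$, the hypothesis on the electric part gives \eqref{eq:assVAendp} for $\widetilde{L}$. The tangential magnetic field is gauge invariant, since $\partial_j\partial_k\phi-\partial_k\partial_j\phi=0$, so the assumption on $\widehat{B}$ is unchanged. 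Next, from $\partial\phi\in L^{n,\infty}$ and Lemma \ref{lem:gaugeest} the map $v\mapsto e^{i\phi}v$ is an isomorphism of $\dot H^s_p$ for suitable $s,p$, so that $\widetilde{L}=e^{-i\phi}Le^{i\phi}$ inherits from $L$ the property of being selfadjoint and nonnegative on $L^2$; its domain is the pullback of $H^2$ under the gauge, which agrees with $H^2$ on a dense set and is enough to run the functional calculus and Duhamel machinery used in the proof of Theorem \ref{the:endpointsch}.

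The second step is to rule out a zero resonance for $\widetilde{L}$. If $\widetilde{v}$ were a resonant state at $0$ for $\widetilde{L}$ in the sense of Definition \ref{def:reson}, then $v:=e^{i\phi}\widetilde{v}$ would solve $Lv=0$, lie in $H^1_{loc}\cap H^2_{loc}(\mathbb{R}^n\setminus 0)$ (since $e^{i\phi}$ is a smooth-enough multiplier on these local spaces under our assumptions on $\partial\phi$), and inherit the weighted $L^2$ decay \eqref{eq:resona} because $|e^{i\phi}|=1$. Thus $v$ would be a resonance for $L$, contradicting the standing assumption. Hence $0$ is not a resonance for $\widetilde{L}$ either.

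With these verifications in hand, Theorem \ref{the:endpointsch} applies to $\widetilde{L}$ and gives
\begin{equation*}
    \|e^{it\widetilde{L}}g\|_{L^2_tL^{\frac{2n}{n-2},2}}\lesssim \|g\|_{L^2},
\end{equation*}
which, combined with the intertwining formula above, yields \eqref{eq:endpstrsch} for $e^{itL}$. The full range of admissible $(p,q)$ follows by interpolation with the $L^2$ mass conservation $\|e^{itL}f\|_{L^\infty_tL^2}=\|f\|_{L^2}$, and the inhomogeneous estimates \eqref{eq:endostrschnh} come from $TT^*$ combined with the Christ--Kiselev lemma, exactly as at the end of Theorem \ref{the:endpointsch}. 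The only real subtlety in the plan is the bookkeeping around the domain of $\widetilde{L}$ and the transfer of the non-resonance condition through the gauge; everything else is a routine consequence of the already established endpoint bound.
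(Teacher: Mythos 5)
Your proof is correct and follows essentially the same route as the paper: apply the gauge transformation $u = e^{i\phi}\widetilde{u}$, invoke Theorem \ref{the:endpointsch} for the transformed operator $\widetilde{L}=-\Delta_{\widetilde{A}}+V$, and transfer the Strichartz bound back using the fact that $|e^{i\phi}|=1$ preserves Lebesgue/Lorentz norms. The paper's own proof is much terser — it simply states that $V,\widetilde{A}$ satisfy the hypotheses of Theorem \ref{the:endpointsch} and concludes — whereas you take care to verify the gauge invariance of $\widehat{B}$, the transfer of the non-resonance condition at zero, and flag the domain question for $\widetilde{L}$; these are exactly the points the paper glosses over, so your more detailed account is a welcome elaboration rather than a deviation. (Minor note: the initial datum of $\widetilde{u}$ is $e^{-i\phi}f$, as you have it; the paper has a sign typo.)
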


\begin{proof}
  Applying the gauge transformation
  \begin{equation*}
    u=e^{i \phi(x)}\widetilde{u}
  \end{equation*}
  and recalling \eqref{eq:gaugefor} we see that $\widetilde{u}$
  is a solution of
  \begin{equation*}
    \widetilde{u}(0)=e^{i \phi}f,
    \qquad
    i\partial_{t}\widetilde{u}-\Delta_{\widetilde{A}}
    \widetilde{u}
    -V\widetilde{u}=0,
    \qquad
    \widetilde{A}:=A+\partial \phi.
  \end{equation*}
  By assumption, $V$ and $\widetilde{A}$ satisfy the
  conditions of Theorem \ref{the:endpointsch} hence
  Strichartz estimates are valid for $\widetilde{u}$.
  Since Lebesgue and Lorentz norms of $u$ and $\widetilde{u}$
  coincide, the proof is concluded.
\end{proof}

\begin{theorem}[Strichartz for Wave]\label{the:striWE}
  Under the assumptions of Theorem \ref{the:endpointsch},
  or more generally
  the assumptions of Corollary \ref{cor:strschgauge},
  the wave flow $e^{it \sqrt{L}}$ satisfies the estimates
  \begin{equation}\label{eq:strWEL}
    \||D|^{\frac1q-\frac1p}e^{it \sqrt{L}}f\|_{L^{p}_{t}L^{q}}
    \lesssim
    \|f\|_{\dot H^{\frac12}},
    \qquad
    \||D|^{\frac1q-\frac1p}e^{it \sqrt{L}}L^{-1/2}f\|
      _{L^{p}_{t}L^{q}}
    \lesssim
    \|f\|_{\dot H^{-\frac12}}
  \end{equation}
  and
  \begin{equation}\label{eq:strWELnh}
    \textstyle
    \||D|^{\frac1q-\frac1p}\int_{0}^{t}e^{i(t-t')\sqrt{L}}F(t')dt'\|
    _{L^{p}_{t}L^{q}}
    \lesssim
    \||D|^{\frac1{\widetilde{p}}-\frac1{\widetilde{q}}}F\|
      _{L^{\widetilde{p}'}L^{\widetilde{q}'}}.
  \end{equation}
  for any wave admissible, non endpoint couples
  $(p,q)$ and $(\widetilde{p},\widetilde{q})$.
\end{theorem}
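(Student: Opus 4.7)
The plan is a perturbation/Duhamel argument, paralleling the proof of Theorem \ref{the:endpointsch} but working with the free d'Alembertian in place of the free Schr\"odinger propagator and combining with the wave smoothing bounds of Corollary \ref{cor:smooWE}. Writing $u(t)=e^{it\sqrt{L}}f$, the function $u$ satisfies $\partial_t^2 u+Lu=0$ with data $(f,i\sqrt{L}f)$. Splitting $L=-\Delta+W$ with
\begin{equation*}
W u \;=\; -2i\,\partial\cdot(Au) + (V+|A|^2+i\,\partial\cdot A)\,u,
\end{equation*}
Duhamel for $\Box=\partial_t^2-\Delta$ writes $u$ as the free-wave data terms $\cos(t|D|)f+\sin(t|D|)|D|^{-1}(i\sqrt{L}f)$ plus the retarded integral $-\int_0^t \sin((t-s)|D|)|D|^{-1}\,Wu(s)\,ds$. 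The free-wave data terms are controlled in the $|D|^{1/q-1/p}$-weighted $L^p_tL^q$ norm by the classical non endpoint wave Strichartz estimates, after reducing the bound for the second term to $\|\sqrt L f\|_{\dot H^{-1/2}}\lesssim\|L^{1/4}f\|_{L^2}\lesssim\|f\|_{\dot H^{1/2}}$ via both inequalities in Lemma \ref{lem:sobolevL}.

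For the Duhamel integral, the key ingredient is a mixed free-wave inhomogeneous Strichartz/smoothing bound: for any non endpoint wave admissible $(p,q)$ and any weight $\rho\in\ell^2L^\infty$ with $|x|\partial\rho\in\ell^2L^\infty$,
\begin{equation*}
\Bigl\||D|^{1/q-1/p}\int_0^t \tfrac{\sin((t-s)|D|)}{|D|}F(s)\,ds\Bigr\|_{L^p_tL^q} \lesssim \|\rho^{-1}|x|F\|_{L^2_tL^2} + \|\rho^{-1}|x|^{1/2}|D|^{-1/2}F\|_{L^2_tL^2},
\end{equation*}
which follows from the classical free-wave Strichartz estimates paired with the free Kato--Yajima smoothing bounds for $|D|$ (i.e.\ Corollary \ref{cor:smooWE} applied to $L=-\Delta$) via the standard $TT^*$ scheme and the Christ--Kiselev lemma. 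We apply it to $F=Wu$. The zeroth-order piece $(V+|A|^2+i\partial\cdot A)u$ is placed in the first norm above, where the weighted decay hypotheses \eqref{eq:assVA} absorb the multiplier against the smoothing estimate $\|\rho|x|^{-1}u\|_{L^2_tL^2}\lesssim\|f\|_{\dot H^{1/2}}$ from \eqref{eq:smooWEh}. The first-order piece $\partial\cdot(Au)$ is placed in the second norm, where $|D|^{-1/2}\partial$ is essentially $|D|^{1/2}$, reducing matters to bounding $\||D|^{1/2}(Au)\|_{L^2_tL^2}$; a Kato--Ponce fractional Leibniz step combined with the weighted commutator bounds of Lemma \ref{lem:L2bddcomm} splits the derivative between $A\in\dot H^{1/2}_{2n}$ (from \eqref{eq:VAass}) and $u$, where the latter factor is absorbed by the sharp derivative smoothing estimate \eqref{eq:smooWE3}. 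This is the main technical step, structurally identical to the corresponding computation in the Schr\"odinger case but free of the endpoint subtleties.

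The second homogeneous bound in \eqref{eq:strWEL} follows from the first by substituting $f\mapsto L^{-1/2}f$, using Lemma \ref{lem:sobolevL} to identify the relevant $\dot H^{-1/2}$ norm with its $L$-analogue. The non homogeneous estimate \eqref{eq:strWELnh} is then obtained from the homogeneous ones by the standard $TT^*$ method combined with the Christ--Kiselev lemma, which is permissible precisely because $(p,q)$ and $(\tilde p,\tilde q)$ are non endpoint, yielding in particular the time-integrability condition $\tilde p'<p$. The principal obstacle throughout is the treatment of the magnetic first-order term $\partial\cdot(Au)$, which forces the use of the sharp derivative smoothing estimate \eqref{eq:smooWE3} in tandem with the weighted commutator bounds of Lemma \ref{lem:L2bddcomm}.
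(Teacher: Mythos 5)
Your proposal follows the same scheme as the paper's proof: Duhamel with respect to the free wave propagator, control of the data terms via the classical (non-endpoint) wave Strichartz estimates plus the equivalence $\|L^{1/4}f\|_{L^2}\simeq\|f\|_{\dot H^{1/2}}$ of Lemma \ref{lem:sobolevL}, a mixed free-wave Strichartz/smoothing estimate for the Duhamel integral obtained by $TT^*$ duality from free Strichartz and free Kato--Yajima smoothing (the paper's \eqref{eq:mixedWE}), treatment of the magnetic term $\partial\cdot(Au)$ by commuting the weight past $|D|^{1/2}$ (Lemma \ref{lem:L2bddcomm}), Kato--Ponce, and the derivative smoothing estimate \eqref{eq:smooWE3}, and finally $TT^*$ plus Christ--Kiselev for the inhomogeneous bound. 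The only variation is cosmetic: for the zeroth-order piece you put it directly against the dual of the $\rho|x|^{-1}$ smoothing estimate, whereas the paper routes it through the $\rho|x|^{-1/2}|D|^{1/2}$ smoothing by reusing the weighted Sobolev embedding step from the Schr\"odinger proof; both are licensed by \eqref{eq:smooWEh} and give the same conclusion.
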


\begin{proof}
  When $\phi\neq0$,
  as in the previous proof, we perform a gauge transform
  $u=e^{i \phi}\widetilde{u}$;
  note that by Lemma \ref{lem:gaugeest} the transformation
  $u \mapsto \widetilde{u}$ is bounded on $\dot H^{s}_{p}$
  and on $\dot H^{-s}_{p'}$
  for $p<n/s$, $s\in[0,1]$ since $\partial \phi\in L^{n,\infty}$.
  Thus it is sufficient to prove the Strichartz estimates for
  $\widetilde{u}$. In the following we shall
  write $\widetilde{A}=A+\partial \phi$, but we shall omit
  for simplicity the tilde from $\widetilde{u}$
  and from $\widetilde{L}=-\Delta_{\widetilde{A}}+V$.
  Note that the wave flow satisfies the smoothing estimates
  \eqref{eq:smooWEh} and \eqref{eq:smooWE3}.

  The function $u=e^{it \sqrt{L}}$ solves
  \begin{equation*}
    \partial_{t}^{2}u=-Lu
    =\Delta u +2i \partial \cdot (\widetilde{A}u)
    +(V-|\widetilde{A}|^{2})u,
  \end{equation*}
  with data
  \begin{equation*}
    u(0)=f,\qquad
    \partial_{t}u(0)=i \sqrt{L}f.
  \end{equation*}
  Thus $u$ can be represented as
  \begin{equation}\label{eq:duhWE}
    \textstyle
    u=
    \cos(t|D|)f+
    i\frac{\sin(t|D|)}{|D|}\sqrt{L}f+
    \int_{0}^{t}\frac{\sin((t-t')|D|)}{|D|}
      (2i \partial \cdot(\widetilde{A}u)+
      (V-|\widetilde{A}|^{2})u)dt'.
  \end{equation}
  To the first term we apply directly the free estimate
  \begin{equation}\label{eq:strichWE}
    \||D|^{\frac1q-\frac1p}e^{-it|D|}f\|_{L^{p}_{t}L^{q}}
    \lesssim
    \|f\|_{\dot H^{\frac12}}
  \end{equation}
  To the second term we apply \eqref{eq:strichWE},
  obtaining
  \begin{equation}\label{eq:intermhom}
    \||D|^{-1}\sin(t|D|)\sqrt{L}f\|_{L^{p}_{t}L^{q}}
    \lesssim
    \||D|^{-\frac12}\sqrt{L}f\|_{L^{2}}.
  \end{equation}
  Since $|V|+|\widetilde{A}|^{2}\lesssim|x|^{-2}$, by Hardy's inequality
  we have
  \begin{equation*}
    \|L^{\frac12}f\|_{L^{2}}^{2}
    \le
    \|(\partial+iA)v\|_{L^{2}}^{2}
    +
    \||V|^{\frac12}f\|_{L^{2}}^{2}
    \lesssim
    \|f\|_{\dot H^{1}}^{2}
  \end{equation*}
  and by interpolation and duality we obtain
  \begin{equation}\label{eq:sobequiv}
    \|L^{\frac s2}f\|_{L^{2}}
    \lesssim
    \|f\|_{\dot H^{s}},
    \qquad
    \|f\|_{\dot H^{-s}}
    \lesssim
    \|L^{-\frac s2}f\|_{L^{2}},
    \qquad
    0\le s\le 1.
  \end{equation}
  Applying these inequalities to \eqref{eq:intermhom} we get
  \begin{equation*}
    \||D|^{-1}\sin(t|D|)\sqrt{L}f\|_{L^{p}_{t}L^{q}}
    \lesssim
    \|f\|_{\dot H^{\frac12}}.
  \end{equation*}

  Next we consider the last term in \eqref{eq:duhWE};
  more generally, we shall estimate two integrals of the form
  \begin{equation*}
    \textstyle
    \int_{0}^{t}\frac{e^{i(t-t')|D|}}{|D|}
      \partial\cdot(\widetilde{A}u)dt',
    \qquad
    \int_{0}^{t}\frac{e^{i(t-t')|D|}}{|D|}
      (V-|\widetilde{A}|^{2}) udt'.
  \end{equation*}
  Since we are 
  in the non endpoint case, by a standard application of the
  Christ--Kiselev Lemma, it will sufficient to estimate the
  two untruncated integrals
  \begin{equation*}
    \textstyle
    I=e^{it|D|}
    \int\frac{e^{-it'|D|}}{|D|}
      (V-|\widetilde{A}|^{2})udt',
    \qquad
    II=e^{it|D|}
    \int\frac{e^{-it'|D|}}{|D|}
      \partial \cdot(\widetilde{A} u)dt'.
  \end{equation*}

  If we first apply \eqref{eq:strichWE} then
  the dual smoothing estimate \eqref{eq:smooWEh} in the 
  elementary case $L=-\Delta$, we obtain
  \begin{equation}\label{eq:mixedWE}
    \textstyle
    \||D|^{\frac1q-\frac1p}e^{it|D|}
      \int \frac{e^{-it'|D|}}{|D|}F(t')dt'\|
      _{L^{p}_{t}L^{q}}
    \lesssim
    \|\rho^{-1}|x|^{1/2}|D|^{-1/2}F\|_{L^{2}_{t}L^{2}}.
  \end{equation}
  This gives
  \begin{equation*}
    \||D|^{\frac1q-\frac1p}II\| _{L^{p}_{t}L^{q}}
    \lesssim
    \|\rho^{-1}|x|^{1/2}|D|^{-1/2}
      \partial \cdot(\widetilde{A} u)
      \|_{L^{2}_{t}L^{2}}
    \lesssim
    \|\rho^{-1}|x|^{1/2}|D|^{1/2}
      (\widetilde{A} u) \|_{L^{2}_{t}L^{2}}
  \end{equation*}
  since $\rho^{-2}|x|\in A_{2}$. 
  We can commute
  $\rho^{-1}|x|^{1/2}$ with $|D|^{1/2}$ as in the proof of
  Theorem \ref{the:endpointsch}; we get
  \begin{equation*}
    \lesssim\||D|^{1/2}\rho^{-1}|x|^{1/2}
      (\widetilde{A} u) \|_{L^{2}_{t}L^{2}}
      =
    \||D|^{1/2}(\rho^{-2}|x|\widetilde{A})
    (\rho|x|^{-1/2}u)\|_{L^{2}_{t}L^{2}}
  \end{equation*}
  and by the Kato--Ponce inequality
  \begin{equation*}
    \lesssim
    \|\rho^{-2}|x|\widetilde{A}\|_{L^{\infty}}
    \||D|^{1/2}\rho|x|^{-1/2}u\|_{L^{2}_{t}L^{2}}
    +
    \||D|^{1/2}\rho^{-2}|x|\widetilde{A}\|_{L^{2n}}
    \|\rho|x|^{-1/2}u\|_{L^{2}_{t}L^{\frac{2n}{n-1}}}.
  \end{equation*}
  Applying \eqref{eq:muwleb} to the last term and recalling
  assumptions \eqref{eq:strWEL}, \eqref{eq:strWELnh}
  we obtain
  \begin{equation*}
    \lesssim
    \||D|^{1/2}\rho|x|^{-1/2}u\|_{L^{2}_{t}L^{2}}
    +
    \|\rho|x|^{-1/2}|D|^{1/2}u\|_{L^{2}_{t}L^{2}}
    \lesssim
    \|L^{1/4}f\|_{L^{2}}
    \lesssim
    \|f\|_{\dot H^{1/2}}
  \end{equation*}
  by the smoothing estimates \eqref{eq:smooWEh}
  and \eqref{eq:smooWE3}.

  For the remaining term $I$ we have, again by
  \eqref{eq:mixedWE}.
  \begin{equation*}
    \||D|^{\frac1q-\frac1p}I\| _{L^{p}_{t}L^{q}}
    \lesssim
    \|\rho^{-1}|x|^{1/2}|D|^{-1/2}
    (V-|\widetilde{A}|^{2})u\|_{L^{2}_{t}L^{2}}.
  \end{equation*}
  Then we repeat exactly the same steps as in the estimate
  of the term $I$ in the proof of Theorem \ref{the:endpointsch}
  (with $\sigma=\rho^{-1}|x|^{1/2}$),
  and we arrive at
  \begin{equation*}
    \lesssim
    \|\rho|x|^{-1/2}|D|^{1/2}u\|_{L^{2}_{t}L^{2}}
    \lesssim\|L^{1/4}f\|_{L^{2}}\lesssim\|f\|_{L^{2}}.
  \end{equation*}
  using \eqref{eq:smooWEh}.
  Summing up, we have proved the first estimate in
  \eqref{eq:strWEL}.

  The proof of the second estimate in \eqref{eq:strWEL}
  is completely identical: indeed,
  it is sufficient to notice that
  $u=\sin(t \sqrt{L})L^{-1/2}$ solves
  \begin{equation*}
    \partial_{t}^{2}u=-Lu,
    \qquad
    u(0)=0,
    \qquad
    \partial_{t}u(0)=f.
  \end{equation*}
  The proof of the nonhomogeneous estimate \eqref{eq:strWELnh}
  follows as usual by a $TT^{*}$ argument and the
  Christ--Kiselev Lemma (since $\widetilde{p}'<2<p$).
\end{proof}

\begin{theorem}[Strichartz for K--G]\label{the:striKG}
  Under the assumptions of Theorem \ref{the:endpointsch},
  or of Corollary \ref{cor:strschgauge},
  the Klein--Gordon flow $e^{it \sqrt{L+1}}$ 
  satisfies the estimates
  \begin{equation}\label{eq:strKGL}
    \|\bra{D}^{\frac1q-\frac1p}e^{it \sqrt{L+1}}f\|_{L^{p}_{t}L^{q}}
    \lesssim
    \|f\|_{ H^{\frac12}},
    \qquad
    \|\bra{D}^{\frac1q-\frac1p}e^{it \sqrt{L+1}}L^{-1/2}f\|
      _{L^{p}_{t}L^{q}}
    \lesssim
    \|f\|_{ H^{-\frac12}}
  \end{equation}
  and
  \begin{equation}\label{eq:strKGLnh}
    \textstyle
    \|\bra{D}^{\frac1q-\frac1p}\int_{0}^{t}e^{i(t-t')\sqrt{L+1}}F(t')dt'\|
    _{L^{p}_{t}L^{q}}
    \lesssim
    \|\bra{D}^{\frac1{\widetilde{p}}-\frac1{\widetilde{q}}}F\|
      _{L^{\widetilde{p}'}L^{\widetilde{q}'}}.
  \end{equation}
  for any wave or Schr\"{o}dinger admissible, non endpoint couples
  $(p,q)$ and $(\widetilde{p},\widetilde{q})$.
\end{theorem}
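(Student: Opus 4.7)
The plan is to follow the strategy of Theorem \ref{the:striWE}, modifying only those steps that are sensitive to the difference between the wave propagator $e^{it|D|}$ and the Klein--Gordon propagator $e^{it\bra{D}}$. First, when $\phi\neq 0$ I apply the gauge transform $u=e^{i\phi}\widetilde{u}$, noting that multiplication by $e^{i\phi}$ is an isometry on $L^2$ and bounded on $\dot H^{1/2}_2$ by Lemma \ref{lem:gaugeest}, hence bounded on $H^{1/2}$ (and by duality on $H^{-1/2}$). So I may assume $\phi=0$ and work with $A$ in place of $\widetilde{A}$. The equation satisfied by $u=e^{it\sqrt{L+1}}f$ rewrites as
\begin{equation*}
\partial_t^2 u+(-\Delta+1)u=2i\partial\cdot(Au)-\widetilde{V}u,\qquad
\widetilde{V}:=V+i\partial\cdot A+|A|^2,
\end{equation*}
so Duhamel with the free Klein--Gordon evolution gives
\begin{equation*}
u=\cos(t\bra{D})f+\tfrac{\sin(t\bra{D})}{\bra{D}} i\sqrt{L+1}\,f
+\int_0^t \tfrac{\sin((t-t')\bra{D})}{\bra{D}}\bigl[2i\partial\cdot(Au)-\widetilde{V}u\bigr]dt'.
\end{equation*}

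For the two free pieces I use the standard Klein--Gordon Strichartz estimate $\|\bra{D}^{1/q-1/p}e^{it\bra{D}}g\|_{L^p_tL^q}\lesssim\|g\|_{H^{1/2}}$, which is valid for every non endpoint wave or Schr\"odinger admissible $(p,q)$ by \cite{MachiharaNakanishiOzawa03-a}; the second term is then handled using the nonhomogeneous Sobolev comparison $\|(L+1)^{s/2}g\|_{L^2}\lesssim\|g\|_{H^s}$ for $0\le s\le 1$, proved exactly as Lemma \ref{lem:sobolevL} but including the mass, which in particular yields $\|\bra{D}^{-1}\sqrt{L+1}f\|_{H^{1/2}}\lesssim\|f\|_{H^{1/2}}$. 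Since $(p,q)$ is non endpoint, Christ--Kiselev reduces the Duhamel integrals to the two untruncated quantities
\begin{equation*}
I=e^{it\bra{D}}\!\int e^{-it'\bra{D}}\bra{D}^{-1}\widetilde{V}u\,dt',\qquad
II=e^{it\bra{D}}\!\int e^{-it'\bra{D}}\bra{D}^{-1}\partial\cdot(Au)\,dt'.
\end{equation*}

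For these I need the mixed Strichartz--smoothing bound
\begin{equation*}
\|\bra{D}^{1/q-1/p}e^{it\bra{D}}\!\int e^{-it'\bra{D}}\bra{D}^{-1}F(t')\,dt'\|_{L^p_tL^q}\lesssim \|\rho^{-1}|x|^{1/2}\bra{D}^{-1/2}F\|_{L^2_tL^2},
\end{equation*}
obtained by composing the free KG Strichartz with the dual of the free smoothing estimate $\|\rho|x|^{-1/2}\bra{D}^{-1/2}e^{it\bra{D}}f\|_{L^2_tL^2}\lesssim\|f\|_{L^2}$ (which is the $A=0,V=0$ case of \eqref{eq:smooKGhsw} after absorbing $\bra{D}^{1/2}$). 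Once this is in hand, for $II$ I note that on the range of frequencies $\bra{D}^{-1/2}\partial$ is dominated by $\bra{D}^{1/2}\lesssim 1+|D|^{1/2}$, so the analysis mirrors the wave case: I commute $\rho^{-1}|x|^{1/2}$ past $|D|^{1/2}$ via Lemma \ref{lem:L2bddcomm}, apply Kato--Ponce to $|D|^{1/2}(\rho^{-2}|x|A)(\rho|x|^{-1/2}u)$, and close with the smoothing estimates \eqref{eq:smooWEh}, \eqref{eq:smooWE3} for $e^{it\sqrt{L+1}}$ (with $L+1$ in place of $L$). For $I$, since $\bra{D}^{-1/2}$ is $L^p$-bounded, the bound reduces to controlling $\|\rho^{-1}|x|^{1/2}\widetilde{V}u\|_{L^2_tL^2}$, which is handled by the same weighted Sobolev embedding argument used for the term $I$ in the proof of Theorem \ref{the:endpointsch}, followed again by the smoothing estimate. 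The second identity in \eqref{eq:strKGL} follows by applying the same argument to $\sin(t\sqrt{L+1})L^{-1/2}$, and \eqref{eq:strKGLnh} is a $TT^*$-plus-Christ--Kiselev consequence of the homogeneous case.

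The main obstacle is purely bookkeeping: I must check that the weight $\bra{D}$ (rather than $|D|$) does not spoil either the low-frequency end, where $\bra{D}\sim 1$ and so $\bra{D}^{1/q-1/p}$ behaves as in a Schr\"odinger estimate, or the high-frequency end, where $\bra{D}\sim|D|$ and so the estimates reduce to the wave case. The only substantive item to verify is that the free Klein--Gordon Strichartz estimate is uniform over both wave and Schr\"odinger admissible couples simultaneously, which is exactly what \cite{MachiharaNakanishiOzawa03-a} provides; once this is granted, the rest of the argument is a direct transposition of the proof of Theorem \ref{the:striWE}.
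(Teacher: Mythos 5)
Your proposal is correct and follows essentially the same route as the paper, whose own proof simply reduces the Klein--Gordon case to the argument of Theorem \ref{the:striWE} together with the free Klein--Gordon Strichartz estimate valid simultaneously for wave and Schr\"{o}dinger admissible couples (citing \cite{DAnconaFanelli08-a} and \cite{Brenner85-a}); your write-up fills in exactly the details the paper leaves implicit. One wording caveat: in the term $I$ you should not discard $\bra{D}^{-1/2}$ by mere boundedness but rather factor $\bra{D}^{-1/2}=m(D)|D|^{-1/2}$ with $m$ a Mikhlin multiplier, since the weighted Sobolev embedding step you then invoke needs the half-derivative gain to absorb the $|x|^{-2}$ singularity of $\widetilde V$ at the origin --- but as you do invoke that embedding, this is a matter of phrasing rather than a gap.
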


\begin{proof}
  The proof is almost identical to the proof of 
  Theorem \ref{the:striWE},
  and is based on the estimate
  \begin{equation*}
    \|\bra{D}^{\frac1q-\frac 1p}
        e^{it \sqrt{1-\Delta}}f\|_{L^{p}_{t}L^{q}_{x}}
    \lesssim
    \|f\|_{ H^{\frac12}(\mathbb{R}^{m})}
  \end{equation*}
  which holds both if the couple $(p,q)$ is
  wave admissible and
  if it is Schr\"{o}dinger admissible.
  A complete proof for Schr\"{o}dinger admissible $(p,q)$
  can be found e.g. in the Appendix of \cite{DAnconaFanelli08-a},
  while for wave admissible indices the proof is obtained
  starting from the estimate
  \begin{equation*}
    j\ge1,\quad
    \phi_{j}\in \mathscr{S},
    \quad
    \spt \widehat{\phi_{j}}=\{|\xi|\sim 2^{j}\}
    \quad\implies\quad
    \|e^{it \sqrt{1-\Delta}}\phi_{j}\|
        _{L^{\infty}(\mathbb{R}^{m})}
    \lesssim
    |t|^{-\frac{m-1}{2}}2^{\frac{m+1}{2}}
  \end{equation*}
  (see \cite{Brenner85-a}) and then applying the
  usual Ginibre-Velo procedure.
\end{proof}

\bibliography{/Users/piero/Documents/Biblioteca/-bib/bibliodatabase.bib}
\bibliographystyle{abbrv}
\end{document}